\newtheorem{theorem}{Theorem}[section]
\newtheorem{lemma}{Lemma}[section]
\newtheorem{remark}{Remark}[section]
\numberwithin{equation}{section}
\begin{document}
\title[Magnetohydrodynamic equations]{On classical solutions of   the  compressible Magnetohydrodynamic equations with vacuum}

\author{Shengguo Zhu*}
\address[S. G. Zhu]{Department of Mathematics, Shanghai Jiao Tong University,
Shanghai 200240, P.R.China; School of Mathematics, Georgia Tech
Atlanta 30332, U.S.A.}
\email{\tt szhu44@math.gatech.edu}

\thanks{* correspondence author}

\begin{abstract}In this paper, we consider the  3-D compressible isentropic  MHD equations with infinity electric conductivity. The  existence of unique local classical solutions is  established  when the initial data is  arbitrarily large, contains vacuum and satisfies some initial layer compatibility condition. The initial mass density  needs not be bounded away from zero  and may vanish in some open set or decay at infinity. Moreover,  we  prove  that the $L^\infty$ norm of the deformation tensor of velocity gradients controls the possible blow-up  (see \cite{olga}\cite{zx}) for  classical (or strong) solutions, which  means that if a solution of the compressible MHD equations is initially regular and loses its regularity at some later time, then the formation of singularity must be caused  by  losing  the bound of the deformation tensor as the critical time approaches. Our criterion  (see (\ref{eq:2.91})) is the same as Ponce's criterion for $3$-D incompressible Euler equations \cite{pc} and Huang-Li-Xin's  criterion for the $3$-D compressible Navier-stokes equations \cite{hup}.
\end{abstract}

\date{Jan. 12, 2014}
\keywords{MHD, classical solutions, vacuum, compatibility condition, blow-up criterion.\\
{\bf Acknowledgments. } Shengguo Zhu's research was supported in part
by Chinese National Natural Science Foundation under grant 11231006 and 10971135.
}

\maketitle

\section{Introduction}
Magnetohydrodynamics is that part of the mechanics of continuous media which studies the motion of electrically conducting media in the presence of a magnetic field. The dynamic motion of  fluid and  magnetic field interact strongly on each other, so the hydrodynamic and electrodynamic effects are coupled. The applications of magnetohydrodynamics cover a very wide range of physical objects, from liquid metals to cosmic plasmas, for example, the intensely heated and ionized fluids in an electromagnetic field in astrophysics, geophysics, high-speed aerodynamics, and plasma physics.
In $3$-D space, the compressible isentropic   magnetohydrodynamic equations in a domain $\Omega $ of $\mathbb{R}^3$  can be written as
\begin{equation}
\label{eq:1.2j}
\begin{cases}
\displaystyle
H_t-\text{rot}(u\times H)=-\text{rot}\Big(\frac{1}{\sigma}\text{rot}H\Big),\\[6pt]
\displaystyle
\text{div}H=0,\\[6pt]
\displaystyle
\rho_t+\text{div}(\rho u)=0,\\[6pt]
\displaystyle
(\rho u)_t+\text{div}(\rho u\otimes u)
  +\nabla P=\text{div}\mathbb{T}+\mu_0\text{rot}H\times H.
\end{cases}
\end{equation}

In this system,  $x\in \Omega$ is the spatial coordinate; $t\geq 0$ is the time;  $H=(H^{(1)},H^{(2)},H^{(3)})$ is the magnetic field; $0< \sigma\leq \infty$ is the electric conductivity coefficient;  $\rho$ is the mass density;
$u=(u^{(1)},u^{(2)},u^{(3)})\in \Omega$ is the velocity of fluids; $P$ is the pressure law  satisfying 
\begin{equation}
\label{eq:1.3}
P=A \rho^\gamma, \quad \gamma > 1,
\end{equation}
where $A$  is a  positive constant  and $\gamma$ is the adiabatic index;
$\mathbb{T}$ is the stress tensor given by
\begin{equation}
\label{eq:1.4}
\mathbb{T}=2\mu D(u)+\lambda \text{div}u \mathbb{I}_3, \quad D(u)=\frac{\nabla u+(\nabla u)^\top}{2},
\end{equation}
where $D(u)$ is the deformation tensor,  $\mathbb{I}_3$ is the $3\times 3$ unit matrix, $\mu$ is the shear viscosity coefficient, $\lambda$ is the bulk viscosity coefficient, $\mu$ and $\lambda$ are both real constants,
\begin{equation}
\label{eq:1.5}
  \mu > 0, \quad \lambda+\frac{2}{3}\mu \geq 0,\end{equation}
which ensures the ellipticity of the   Lam$\acute{\text{e}}$ operator.
Although the electric field $E$ doesn't appear in  system (\ref{eq:1.2j}),
it is indeed induced according to a relation $
E=-\mu_0 u\times H
$
by moving the conductive flow in the magnetic field.

However, in this paper,  when $\sigma=+\infty$,   system (\ref{eq:1.2j}) can be written into
\begin{equation}
\label{eq:1.2}
\begin{cases}
\displaystyle
H_t-\text{rot}(u\times H)=0,\\[6pt]
\displaystyle
\text{div}H=0,\\[6pt]
\displaystyle
\rho_t+\text{div}(\rho u)=0,\\[6pt]
\displaystyle
(\rho u)_t+\text{div}(\rho u\otimes u)
  +\nabla P=\text{div}\mathbb{T}+\mu_0\text{rot}H\times H
\end{cases}
\end{equation}
with initial-boundary  conditions
\begin{align}
\label{fanmei}(H,\rho, u)|_{t=0}=(H_0(x), \rho_0(x),  u_0(x)),\quad   x\in \Omega,\quad  u|_{\partial {\Omega}}=0, \\
\label{fan1}(H(t,x),\rho(t,x), u(t,x),P(t,x)) \rightarrow (0,\overline{\rho},0,\overline{P}) \quad \text{as} \quad |x|\rightarrow \infty,\quad t>0,
\end{align}
where $\overline{\rho} \geq 0$  and $ \overline{P}=A\overline{\rho}^\gamma$ are both  constants, and  $\Omega$ can be a  bounded domain in $\mathbb{R}^3$ with smooth boundary  or the whole space $\mathbb{R}^3$.
We have to point out that,  if $\Omega$ is a bounded domain (or $\mathbb{R}^3$), then the condition (\ref{fan1}) at infinity  (or the boundary condition in (\ref{fanmei})  respectively)   should be neglected.

Throughout this paper, we adopt the following simplified notations for the standard homogeneous and inhomogeneous Sobolev space:
\begin{equation*}\begin{split}
&D^{k,r}=\{f\in L^1_{loc}(\Omega): |f|_{D^{k,r}}=|\nabla^kf|_{L^r}<+\infty\},\quad D^k=D^{k,2}, \\[6pt]
&D^{1}_0=\{f\in L^6(\Omega): |f|_{D^{1}}=|\nabla f|_{L^2}<\infty \ \text{and}\ f|_{\partial \Omega}=0\},\quad \|(f,g)\|_X=\|f\|_X+\|g\|_X,\\[6pt]
 &\|f\|_s=\|f\|_{H^s(\Omega)},\quad |f|_p=\|f\|_{L^p(\Omega)},\quad |f|_{D^k}=\|f\|_{D^k(\Omega)}, \ \mathbb{A}: \mathbb{B}=(a_{ij}b_{ij})_{3\times 3},\\[2pt]
&f\cdot \nabla g=\sum_{i=1}^{3}f_i \partial_i g,\quad f\cdot (\nabla g)=(\sum_{i=1}^3 f_i \partial_1 g_i,\sum_{i=1}^3 f_i \partial_2 g_i,\sum_{i=1}^3 f_i \partial_3 g_i)^\top,
\end{split}
\end{equation*}
where $f=(f_1,f_2,f_3)^\top\in \mathbb{R}^3$ or $f\in \mathbb{R}$, $g=(g_1,g_2,g_3)^\top\in \mathbb{R}^3$   or $g \in \mathbb{R}$,  $X$ is some Sobolev space, $\mathbb{A}=(a_{ij})_{3\times 3}$ and $\mathbb{B}=(b_{ij})_{3\times 3}$ are both $3\times 3$ matrixes. A detailed study of homogeneous Sobolev space may be found in \cite{gandi}.

As has been observed in \cite{jishan},  which proved the existence of unique local strong solution with initial vacuum,  in order to make sure that the Cauchy problem or IBVP (\ref{eq:1.2})-(\ref{fan1}) with  vacuum is well-posed, the lack of a positive lower bound of the initial mass density $\rho_0$ should be compensated with some initial layer compatibility condition on the initial data $(H_0,\rho_0, u_0,P_0)$.  For classical solution,  it can be shown as
\begin{theorem}\label{th5}Let constant $q \in (3,6]$.
If the initial data $(H_0, \rho_0,  u_0,P_0)$ satisfies
\begin{equation}\label{th78}
\begin{split}
& (H_0, \rho_0-\overline{\rho}, P_0-\overline{P}) \in H^2\cap W^{2,q},\  \rho_0\geq 0,\    u_0\in D^1_0\cap D^2,
\end{split}
\end{equation}
and the compatibility condition
\begin{equation}\label{th79}
\begin{split}
Lu_0+\nabla P_0- \text{rot} H_0\times H_0=\sqrt{\rho_0} g_1
\end{split}
\end{equation}
for some $g_1 \in L^2$, where $L$ is the Lam$\acute{\text{e}}$ operator defined via 
$$
Lu=-\mu\triangle u-(\lambda+\mu)\nabla  \text{div} u.
$$
Then there exists a small time $T_*$ and a unique solution $(H,\rho,u,P)$ to IBVP (\ref{eq:1.2})-(\ref{fan1}) satisfying
\begin{equation}\label{regs}\begin{split}
&(H,\rho-\overline{\rho},P-\overline{P})\in C([0,T_*];H^2\cap W^{2,q}),\\
&u\in C([0,T_*];D^1_0\cap D^2)\cap  L^2([0,T_*];D^3)\cap L^{p_0}([0,T_*];D^{3,q}),\ u_t\in  L^2([0,T_*];D^1_0),\\
&\sqrt{\rho}u_t\in L^\infty([0,T_*];L^2),\ t^{\frac{1}{2}}u\in L^\infty([0,T_*];D^3),\ t^{\frac{1}{2}}\sqrt{\rho}u_{tt}\in L^2([0,T_*];L^2),\\
& t^{\frac{1}{2}}u_t\in L^\infty([0,T_*];D^1_0)\cap L^2([0,T_*];D^2),\ tu\in L^\infty([0,T_*]; D^{3,q}),\\
&\ tu_t\in L^\infty([0,T_*]; D^{2}),\ tu_{tt}\in L^2([0,T_*];D^1_0),\ t\sqrt{\rho}u_{tt}\in L^\infty([0,T_*];L^2),
\end{split}
\end{equation}
where $p_0$ is a constant satisfying $1\leq p_0\leq \frac{4q}{5q-6}\in (1,2)$.

\end{theorem}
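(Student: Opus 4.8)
The plan is to construct the solution by a standard linearization-and-iteration scheme combined with uniform a priori estimates that are insensitive to any lower bound on the density, and then to pass to the limit. Fix a previous iterate $u^{k-1}$ (with $u^0$ a suitable extension of $u_0$). I would first solve the linear transport equations for the density and the magnetic field, namely $\rho^k_t + u^{k-1}\cdot\nabla\rho^k + \rho^k\,\text{div}\,u^{k-1}=0$ and, using $\text{div}\,H=0$, $H^k_t + u^{k-1}\cdot\nabla H^k + H^k\,\text{div}\,u^{k-1} - H^k\cdot\nabla u^{k-1}=0$, together with $P^k = A(\rho^k)^\gamma$. These are pure transport (or transport-stretching) equations, so the $H^2\cap W^{2,q}$ norms of $\rho^k-\overline{\rho}$, $P^k-\overline{P}$ and $H^k$ can be propagated on a short time interval by the method of characteristics and Moser-type commutator estimates, the bounds depending only on $\sup_{[0,t]}\|\nabla u^{k-1}\|_{H^2\cap W^{1,q}}$. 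With $\rho^k,H^k$ in hand, I would then solve the linear momentum equation $\rho^k u^k_t + \rho^k u^{k-1}\cdot\nabla u^k + Lu^k = -\nabla P^k + \text{rot}\,H^k\times H^k$ for $u^k$: a linear parabolic system with the elliptic Lam\'e operator $L$ and the possibly degenerate coefficient $\rho^k\ge 0$ in front of the time derivative.

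The main work is to derive estimates for $u^k$ that are uniform in $k$ and independent of any positive lower bound on $\rho^k$. The basic energy estimate and the estimate for $\sqrt{\rho^k}u^k_t$ come from testing the momentum equation with $u^k$ and with $u^k_t$; the degenerate weight forces control of $|\sqrt{\rho^k}u^k_t|_2$ rather than $|u^k_t|_2$. The initial value of this quantity is exactly where the compatibility condition (\ref{th79}) enters: evaluating the momentum equation at $t=0$ gives $\sqrt{\rho_0}u^k_t|_{t=0} = -g_1 - \sqrt{\rho_0}\,u_0\cdot\nabla u_0 \in L^2$, so $|\sqrt{\rho^k}u^k_t|_2$ stays bounded on $[0,T_*]$. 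Higher spatial regularity of $u^k$ (the $D^2$ and eventually $D^{3,q}$ bounds) is recovered by viewing $Lu^k = -\rho^k u^k_t - \rho^k u^{k-1}\cdot\nabla u^k - \nabla P^k + \text{rot}\,H^k\times H^k$ and applying elliptic $L^2$- and $L^q$-regularity for the Lam\'e system, bootstrapping against the transport estimates for $\rho^k,H^k$. The time-weighted bounds such as $t^{1/2}u\in L^\infty D^3$ and $t^{1/2}\sqrt{\rho}u_{tt}\in L^2 L^2$ follow by differentiating the momentum equation in $t$, testing with $u^k_{tt}$, and absorbing the singularity at $t=0$ with the weight $t^{1/2}$; the fractional integrability $u\in L^{p_0}D^{3,q}$ with $p_0<2$ reflects precisely that full $L^\infty D^3$ up to $t=0$ is not available without stronger compatibility.

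Having fixed $T_*$ and a closed bounded set in the solution space preserved by the iteration, I would establish convergence of the whole sequence $(\rho^k,u^k,H^k)$ in a lower-order norm, controlling $\sup_{[0,T_*]}\big(|\rho^{k+1}-\rho^k|_2 + |H^{k+1}-H^k|_2 + |\nabla(u^{k+1}-u^k)|_2^2\big)$ by a Gronwall/contraction argument on the difference equations, so that the sequence is Cauchy. The limit inherits the uniform bounds by weak-$*$ lower semicontinuity and solves (\ref{eq:1.2}); the stated time-continuity (e.g.\ $C([0,T_*];H^2\cap W^{2,q})$) follows from the regularity together with the equations via the usual weak-continuity-plus-norm-continuity argument. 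Uniqueness is obtained by an $L^2$ energy estimate on the difference of two solutions sharing the same data, closed by Gronwall.

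The main obstacle is this block of uniform a priori estimates in the presence of both vacuum and the magnetic coupling term $\text{rot}\,H\times H$: one must close all bounds with the degenerate weight $\rho^k$ in front of $u^k_t$, reconcile the $L^2$-based energy structure with the $W^{2,q}$ and $L^{p_0}D^{3,q}$ (classical) regularity, and control the extra quadratic magnetic forcing consistently with the transport estimates for $H^k$. The compatibility condition and the time-weights are the two devices that make this closure possible.
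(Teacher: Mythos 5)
Your proposal follows essentially the same route as the paper: linearize, propagate $H^2\cap W^{2,q}$ bounds for $(\rho,P,H)$ by transport/commutator estimates, close density-lower-bound-independent energy estimates for $u$ using the compatibility condition to control $|\sqrt{\rho}u_t|_2$ at $t=0$, recover higher regularity from Lam\'e elliptic theory, obtain the time-weighted bounds by testing the time-differentiated momentum equation with $u_{tt}$, and conclude by iteration, contraction in a lower-order norm, and an $L^2$ uniqueness argument. The only organizational difference is that the paper first solves the linearized problem with $\rho_0+\delta$ and passes $\delta\to 0$ before iterating, which is implicit in your ``pass to the limit'' step.
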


\begin{remark}\label{re1}
The solution we obtained in Theorem \ref{th5}  becomes a classical one for positive time. Some similar results have been obtained in \cite{jishan}\cite{dehua}, which give the local existence of strong solutions.  So, the main purpose of this theorem is to give a better regularity for the solutions obtained in \cite{jishan}\cite{dehua} when the initial mass density is nonnegative.
\end{remark}

Though the smooth global solution near the constant state in one-dimensional case has been  studied in \cite{kawa},  however, in $3$-D space, the non-global existence has been proved for the classical solution   to isentropic magnetohydrodynamic equations in \cite{olga} as follows:
\begin{theorem}\cite{olga} \label{olga}
Assume that $\gamma\geq \frac{6}{5}$, if the momentum $\int_{\Omega} \rho u \text{d} x\neq 0 $, then there exists no global classical solution to (\ref{eq:1.2})-(\ref{fan1}) with conserved mass and total energy.
\end{theorem}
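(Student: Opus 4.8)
The natural route is a weighted–momentum (virial) argument by contradiction: assume a global classical solution in the stated energy class exists (the regularity in Theorem \ref{th5} amply justifies all integrations by parts below), and exploit the three controlled quantities of \eqref{eq:1.2}. Because ``conserved mass and total energy'' presupposes vacuum at infinity ($\overline{\rho}=0$ in \eqref{fan1}), the mass $m_0=\int_\Omega\rho\,dx$ is finite and exactly conserved by the continuity equation, the momentum $P_0=\int_\Omega\rho u\,dx$ is conserved by integrating the momentum equation in \eqref{eq:1.2} (pressure, viscous and Lorentz terms are divergences that drop under \eqref{fanmei}--\eqref{fan1}), and the total energy
\[
E(t)=\int_\Omega\Big(\tfrac12\rho|u|^2+\tfrac{A}{\gamma-1}\rho^\gamma+\tfrac{\mu_0}{2}|H|^2\Big)\,dx=:K(t)+I(t)+B(t)
\]
obeys the energy inequality $E(t)\le E(0)$, since $\frac{d}{dt}E=-2\mu\int|D(u)|^2-\lambda\int(\mathrm{div}\,u)^2\le0$ by \eqref{eq:1.5}. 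A first consequence is that the hypothesis $P_0\neq0$ forces the kinetic energy to stay uniformly positive: Cauchy--Schwarz gives $|P_0|^2\le m_0\int\rho|u|^2=2m_0K(t)$, so $K(t)\ge|P_0|^2/(2m_0)>0$ and hence $E(t)\ge|P_0|^2/(2m_0)>0$ for all $t$.

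The core of the argument is the virial identity for $\mathcal F(t)=\int_\Omega\rho\,u\cdot x\,dx$ and $G(t)=\int_\Omega\rho|x|^2\,dx$. From the continuity equation one gets $\dot G=2\mathcal F$; testing the momentum equation against $x$ and integrating by parts, the convective term yields $\int\rho|u|^2$, the viscous stress contributes $-\int\mathrm{tr}\,\mathbb T=-(2\mu+3\lambda)\int\mathrm{div}\,u=0$, the pressure contributes $3A\int\rho^\gamma=3(\gamma-1)I$, and, writing $\mathrm{rot}H\times H=\mathrm{div}\big(H\otimes H-\tfrac12|H|^2\mathbb I_3\big)$ and using $\mathrm{tr}(H\otimes H-\tfrac12|H|^2\mathbb I_3)=-\tfrac12|H|^2$, the Lorentz force contributes $\tfrac{\mu_0}{2}\int|H|^2=B$. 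Thus
\[
\dot{\mathcal F}=\int_\Omega\rho|u|^2\,dx+3A\int_\Omega\rho^\gamma\,dx+\tfrac{\mu_0}{2}\int_\Omega|H|^2\,dx=2K+3(\gamma-1)I+B=2E+(3\gamma-5)I-B.
\]
Since $K\ge|P_0|^2/(2m_0)$ and $I,B\ge0$, we obtain $\ddot G=2\dot{\mathcal F}\ge 2|P_0|^2/m_0>0$, so the second moment grows at least quadratically, $G(t)\gtrsim(|P_0|^2/m_0)\,t^2$. On the other hand $\frac{d}{dt}\sqrt G=\mathcal F/\sqrt G\le\sqrt{2K}\le\sqrt{2E(0)}$ by Cauchy--Schwarz, which gives the upper bound $G(t)\lesssim 2E(0)\,t^2$.

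The plan is to turn these two quadratic bounds into a genuine contradiction, and this is exactly where $\gamma\ge\tfrac65$ and the main difficulty enter. The crude comparison above is consistent with the always-true inequality $|P_0|^2\le2m_0E(0)$, because the leading quadratic coefficients of the upper and lower bounds are both governed by the asymptotic kinetic energy and therefore match. To break this tie one must sharpen the estimate using the refined form $\frac{d}{dt}\sqrt G\le\sqrt{2K}=\sqrt{2(E-I-B)}\le\sqrt{2E}-\frac{I+B}{\sqrt{2E}}$, which shows that the time-integrated internal and magnetic energies $\int_0^t(I+B)\,ds$ feed a strictly negative correction into the growth of $\sqrt G$, while the lower bound $\dot{\mathcal F}\ge2K$ drives it up. The hard part is to show that the fluid must disperse enough that $I$ (and $B$) decay, and to make this quantitative: combining mass conservation with the quadratic spreading of $G$, an interpolation estimate for $\int\rho^\gamma$, and the finite total dissipation $\int_0^\infty\big(2\mu\int|D(u)|^2+\lambda\int(\mathrm{div}\,u)^2\big)\,dt\le E(0)$ extracted from the energy inequality. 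The exponent $\gamma=\tfrac65=\tfrac{2\cdot3}{3+2}$ is precisely the three-dimensional borderline at which this interpolation closes and the subleading comparison becomes incompatible for large $t$, contradicting global existence. I expect this decay/interpolation step controlling the potential and magnetic energies — rather than the virial identity itself — to be the essential obstacle.
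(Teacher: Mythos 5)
First, a point of comparison: the paper does not prove this statement at all. Theorem \ref{olga} is quoted verbatim from Rozanova's preprint \cite{olga} (note the \verb|\cite{olga}| in the theorem header), so there is no in-paper proof to measure your attempt against; your proposal has to stand on its own.

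On its own terms, your setup is correct as far as it goes: the conservation of mass and momentum, the energy inequality, the lower bound $K(t)\ge |P_0|^2/(2m_0)$, and the virial identity $\dot{\mathcal F}=2K+3(\gamma-1)I+B$ (including the vanishing of the viscous trace term and the $+\tfrac{\mu_0}{2}\int|H|^2$ from the Lorentz force) are all computed correctly. But the proof has a genuine gap exactly where you flag it, and the direction you point in does not close it. The two quadratic bounds $G(t)\gtrsim (|P_0|^2/m_0)t^2$ and $G(t)\lesssim 2E(0)t^2$ are compatible because $|P_0|^2\le 2m_0K(0)\le 2m_0E(0)$, as you note; to get a contradiction you must inject a correction that changes a \emph{leading-order} coefficient. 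Your proposed correction is $-\frac{1}{\sqrt{2E(0)}}\int_0^t(I+B)\,ds$ in the upper bound for $\sqrt{G}$, which helps only if $\int_0^t(I+B)\,ds$ grows at least linearly with a quantified slope. However, the only interpolation available from the conserved quantities (mass conservation, $G(t)\lesssim t^2$, and H\"older, splitting $\int\rho$ over $\{|x|\le R\}$ and $\{|x|>R\}$ and optimizing in $R$) yields a \emph{lower} bound of the form $I(t)\ge C\,m_0^{\gamma+\frac{3(\gamma-1)}{2}}G(t)^{-\frac{3(\gamma-1)}{2}}\gtrsim (1+t)^{-3(\gamma-1)}$, whose time integral is $O(t^{1-3(\gamma-1)})=o(t)$ for every $\gamma>1$. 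So the correction term you rely on is provably subleading, and no contradiction follows; moreover nothing in your sketch ever uses, or even locates, the hypothesis $\gamma\ge \tfrac 65$ (you assert it is "precisely the three-dimensional borderline" for an interpolation you do not write down). The decisive step of Rozanova's argument — the precise functional inequality in which $\gamma\ge\frac{2N}{N+2}$ enters and which converts the matched quadratic bounds into an actual incompatibility — is missing, and until it is supplied the proof is a correct set of preliminary identities followed by a conjecture.
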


So, naturally,  we want to understand the mechanism of blow-up and the structure of possible singularities: what kinds of singularities will form in finite time and what is the main mechanism of possible breakdown of smooth solutions for the $3$-D compressible MHD equations? 
Therefore, it is an interesting question to ask whether the same blow-up criterion in terms of $D(u)$ in \cite{hup}\cite{pc} still holds for the compressible MHD equations or not. However, the similar result has been obtained in Xu-Zhang  \cite{gerui} for  strong solutions obtained in \cite{jishan},  which is in terms of $\nabla u$:
\begin{equation}\label{eq:2.911}
\lim \sup_{ T \rightarrow \overline{T}} |\nabla u|_{L^1([0,T]; L^\infty(\Omega))}=\infty.
\end{equation}
 Based on  a subtle estimate for the magnetic field, our main result in this paper  answered this question for  classical  (or strong) solutions   positively, which  can be shown as

\begin{theorem} [\textbf{ Blow-up criterion for the IBVP  (\ref{eq:1.2})--(\ref{fan1})}]\label{th3}\ \\
Assume that  $\Omega $ is a bounded domain and the initial data  $(H_0,\rho_0,u_0, P_0)$ satisfies (\ref{th78})-(\ref{th79}).
 Let $(H,\rho,u,P)$  is a classical solution to IBVP for (\ref{eq:1.2})--(\ref{fan1}). 
If $0< \overline{T} <\infty$ is the maximal time of existence, then
\begin{equation}\label{eq:2.91}
\lim \sup_{ T \rightarrow \overline{T}} |D( u)|_{L^1([0,T]; L^\infty(\Omega))}=\infty.
\end{equation}
Moreover, our blow-up criterion also holds for the strong solutions obtained in \cite{jishan}.
\end{theorem}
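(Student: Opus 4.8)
The plan is to argue by contradiction. Suppose $\overline{T}<\infty$ is the maximal existence time but the criterion fails, i.e.
$$\int_0^{\overline{T}}|D(u)|_{L^\infty(\Omega)}\,\dif t\le M<\infty .$$
I would then show that, under this single standing bound, every norm appearing in the regularity class (\ref{regs}) remains finite as $t\uparrow\overline{T}$. Once this is done, the local existence result (Theorem \ref{th5}) lets one restart the IBVP at $t=\overline{T}$ and continue the solution to a strictly larger interval, contradicting the maximality of $\overline{T}$. Thus the whole theorem reduces to a chain of \emph{a priori} estimates depending only on $M$ and the initial data.

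The first step exploits $\text{div}\,u=\mathrm{tr}\,D(u)$, so that $|\text{div}\,u|_{L^\infty}\le C|D(u)|_{L^\infty}$ and the assumption controls $\int_0^{\overline{T}}|\text{div}\,u|_{L^\infty}\,\dif t$. Integrating the continuity equation $\rho_t+u\cdot\nabla\rho+\rho\,\text{div}\,u=0$ along the flow of $u$ then gives the two-sided bound $0\le\rho(t,\cdot)\le C$ on $[0,\overline{T}]$, hence $0\le P\le C$, while the vacuum set is preserved. Testing the momentum equation with $u$ and the induction equation with $\mu_0H$, and using $\text{div}\,H=0$ together with $\mu_0\text{rot}\,H\times H=\mu_0(H\cdot\nabla)H-\tfrac12\mu_0\nabla|H|^2$, yields the basic energy bound
$$\sup_{[0,\overline{T}]}\big(|\sqrt{\rho}\,u|_{L^2}^2+|H|_{L^2}^2\big)+\int_0^{\overline{T}}|\nabla u|_{L^2}^2\,\dif t\le C .$$

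The heart of the matter is the simultaneous control of $H$ and $\nabla u$. Writing the induction equation in transport–stretching form,
$$H_t+u\cdot\nabla H+H\,\text{div}\,u-H\cdot\nabla u=0 ,$$
one sees that the magnetic estimates (for $|H|_{L^\infty}$ and the $H^2\cap W^{2,q}$ norm of $H$) close only if $\int_0^{\overline{T}}|\nabla u|_{L^\infty}\,\dif t<\infty$; this is exactly the ``subtle estimate for the magnetic field,'' and it is the main obstacle, because the stretching term $H\cdot\nabla u$ and the Lorentz force $\mu_0\text{rot}\,H\times H$ see the full gradient, not merely $D(u)$. To upgrade $D(u)$ to $\nabla u$ I would use the pointwise identity $\partial_j\partial_k u_i=\partial_k D_{ij}+\partial_j D_{ik}-\partial_i D_{jk}$, which exhibits $\nabla^2u$ as a first-order operator of the deformation tensor, together with the Beale--Kato--Majda / Ponce logarithmic Sobolev inequality
$$|\nabla u|_{L^\infty}\le C\big(1+|D(u)|_{L^\infty}\ln(e+|u|_{W^{2,q}})+|\nabla u|_{L^2}\big) .$$
In parallel, testing the momentum equation with $\dot u=u_t+u\cdot\nabla u$ and exploiting the effective-viscous-flux and vorticity structure of the Lam\'e operator (with total pressure $P+\tfrac12\mu_0|H|^2$) gives the first-order bound $\sup_{[0,\overline{T}]}|\sqrt{\rho}\,\dot u|_{L^2}^2+\int_0^{\overline{T}}|\nabla\dot u|_{L^2}^2\,\dif t\le C$, where the Lorentz term is absorbed using the $H^1$ estimate for $H$ coming from the induction equation.

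Finally I would set up a single Gronwall inequality for a combined higher-order quantity such as $\Phi(t)=e+|u(t)|_{W^{2,q}}+|(H,\rho-\overline{\rho},P-\overline{P})(t)|_{H^2\cap W^{2,q}}$. The transport estimates for $\rho,P,H$ contribute $\tfrac{d}{dt}\ln\Phi\le C(1+|\nabla u|_{L^\infty})$, and substituting the logarithmic inequality converts this into $\tfrac{d}{dt}\ln\Phi\le C(1+|D(u)|_{L^\infty})\ln\Phi$; Gronwall and the standing bound $\int_0^{\overline{T}}|D(u)|_{L^\infty}\,\dif t\le M$ then give a finite (double-exponential) bound for $\Phi$ on $[0,\overline{T}]$. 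Bootstrapping this through the time-weighted estimates in (\ref{regs}) (for $t^{1/2}u$, $u_t$, $tu_{tt}$, $t\sqrt{\rho}u_{tt}$, and so on) yields uniform control of all the norms of Theorem \ref{th5} up to $\overline{T}$, which closes the contradiction. I expect the most delicate point to be preventing the quadratic magnetic terms $\text{rot}\,H\times H$ and $H\cdot\nabla u$ from demanding one derivative more than is available; this is precisely what the identity for $\nabla^2u$ and the careful pairing of the induction and momentum estimates are designed to circumvent, and the whole scheme automatically covers the strong solutions of \cite{jishan} as well.
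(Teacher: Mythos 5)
Your overall architecture (contradiction, a priori bounds depending only on $M$ and the data, then continuation via Theorem \ref{th5}) matches the paper, as do the first steps (density and pressure via $\mathrm{div}\,u=\mathrm{tr}\,D(u)$, and the basic energy identity). But you misidentify the central point. You assert that the magnetic estimates ``close only if $\int_0^{\overline{T}}|\nabla u|_{L^\infty}\,\dif t<\infty$'' because the stretching term $H\cdot\nabla u$ sees the full gradient. That is not so, and the observation that it is not so is precisely the paper's ``subtle estimate for the magnetic field.'' In every place where $\nabla u$ threatens to appear with an $L^\infty$ weight, it is contracted against a \emph{symmetric} quadratic expression and hence only its symmetric part survives: $\int (H\cdot\nabla u)\cdot H|H|^{q-2}=\int H^\top D(u)H\,|H|^{q-2}$ in the $L^q$ estimate of $H$ (Lemma \ref{s1}); $(\nabla\rho)^\top\nabla u\,\nabla\rho=(\nabla\rho)^\top D(u)\nabla\rho$ in the $|\nabla\rho|_2$ estimate; and $\sum_{i,k}\partial_jH^k\,\partial_ku^i\,\partial_jH^i=\sum_{i,k}\partial_jH^k\,\tfrac12(\partial_ku^i+\partial_iu^k)\,\partial_jH^i$ in the $|\nabla H|_2$ and $|\nabla H|_q$ estimates (Lemmas \ref{s4}, \ref{s7}). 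The remaining cross terms involve $\nabla^2u$ in $L^2$ or $L^q$ and are absorbed by the elliptic estimate for the Lam\'e operator, not by $|\nabla u|_\infty$. The bound $\int_0^T|\nabla u|_\infty^2\,\dif t\le C$ is then a \emph{consequence} of $\int_0^T|u|^2_{D^{2,q}}\,\dif t\le C$ obtained afterwards, never an input.

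Your proposed workaround --- the identity $\partial_j\partial_ku_i=\partial_kD_{ij}+\partial_jD_{ik}-\partial_iD_{jk}$ plus a Beale--Kato--Majda/Ponce logarithmic inequality and a log-Gronwall on $\Phi=e+|u|_{W^{2,q}}+|(H,\rho-\overline{\rho},P-\overline{P})|_{H^2\cap W^{2,q}}$ --- is a genuinely different route, but as sketched it does not close. Only the transported components $(\rho,P,H)$ of $\Phi$ satisfy a differential inequality of the form $\frac{d}{dt}(\cdot)\le C(1+|\nabla u|_\infty)(\cdot)+\dots$; the component $|u|_{W^{2,q}}$ is not transported and must be recovered from the elliptic estimate $|u|_{D^{2,q}}\le C(|\rho u_t|_q+|\rho u\cdot\nabla u|_q+|\nabla P|_q+|\mathrm{rot}H\times H|_q+\dots)$, which drags in $|\sqrt{\rho}u_t|$ and $|\nabla u_t|_2$. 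Those in turn require the compatibility condition (\ref{th79}) and the prior bounds $\sup(|\nabla u|_2+|\nabla H|_2+|\nabla\rho|_2)$ and $\int|\nabla^2u|_2^2$ --- exactly the estimates whose closure you deferred to the logarithmic inequality. So the single Gronwall conflates layers that must be established in a definite order (energy $\to$ $L^\infty$ of $\rho,H$ $\to$ first derivatives and $\int|u|^2_{D^2}$ $\to$ $|\sqrt{\rho}u_t|_2$ $\to$ $W^{1,q}$ $\to$ second derivatives), and the claimed ``parallel'' material-derivative estimate presupposes the very first-order bounds in question. The symmetrization observation removes the need for the logarithm altogether and is the missing idea here; without it, you would at minimum have to restructure the bootstrap and verify that the logarithmic inequality for $D(u)$ is valid on a bounded domain with the Dirichlet condition, neither of which your sketch addresses.
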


\begin{remark}When $H\equiv 0$ in $3$-D space, the existence of unique local strong solution with vacuum has been solved by many papers, and  we refer the readers to \cite{CK3}\cite{CK}\cite{guahu}. Huang-Li-Xin obtained the  well-posedness of global classical solutions with small energy but possibly large oscillations and vacuum for Cauchy problem \cite{HX1} or IBVP \cite{HX2}.

However,  for compressible non-isentropic Navier-Stokes equations,    the finite time blow-up has been proved in  Olga \cite{olg1} for classical solutions $(\rho, u,S)$ $(S\  is\  the\  entropy)$ with  highly decreasing at infinity for the compressible non-isentropic Navier-stokes equations, but the local existence for the corresponding smooth solution is still open. 

 Recently,  Xin-Yan \cite{xy}  showed that if the initial vacuum only appears in some local domain, the smooth solution $(\rho,  \theta, u )$ to the Cauchy problem (\ref{eq:1.2})--(\ref{fan1}) will blow-up in finite time regardless of the size of initial data, which has removed the key assumption that the vacuum must appear in the far field in \cite{zx}.

Sun-Wang-Zhang \cite{zif}\cite{zhang2d} established a Beal-Kato-Majda blow-up criterion in terms of the upper bound of density for the strong solution with vacuum in $3$-D or $2$-D  space, which is weaker than the blow-up criterions obtained in \cite{hup}\cite{pc}.  Then our result can not replace $\int_0^{T} |D(u)|_\infty\text{d}t$ by 
$|\rho|_\infty$ because of the coupling of $u$ and $H$ in magnetic equation and  the lack of smooth mechanism of $H$.

Moreover,  results presented above are essentially dependent of the strong ellipticity of Lam$\acute{ \text{e}}$ operator.
 Compared with Euler equations \cite{tms1}, the velocity $u$ of fluids satisfies
$ Lu_0=0$ in the vacuum domain naturally due to the constant viscosity coefficients which makes sure that $u$ is well defined in the vacuum points without other assumptions as   \cite{tms1}.

 Recently, Li-Pan-Zhu  \cite{lipan} proved  the local existence of regular solutions for  the $2$-D Shallow water equations with $\mathbb{T}=\rho \nabla u$ when initial mass density decays to zero,  and the corresponding Beal-Kato-Majda blow-up criterion is also obtained. 

\end{remark}

The rest of  this paper is organized as follows. In Section $2$, we give some important lemmas which will be used frequently in our proof.  In Section $3$,   via  establishing a priori estimate (for the approximation solutions) which is independent of the lower bound of the initial mass density $\rho_0$, we can obtain the existence of unique local classical solution by the approximation process from non-vacuum to vacuum.  In Section $4$,  we give the proof for the blow-up criterion (\ref{eq:2.91}) for the classical solutions obtained in Section $3$. Firstly in Section $4.1$,  via assuming that the opposite of  (\ref{eq:2.91}) holds, we show that the solution in $ [0,\overline{T}]$ has the regularity that the strong solution has to satisfy obtained in \cite{jishan}. Then secondly  in Section $4.2$, based on the estimates shown in Section $4.1$, we improve the regularity of $(H,\rho,u,P)$ to make sure that  it is also  a classical one in $ [0,\overline{T}]$, which contradicts our assumption.
\section{Preliminary}

Now we give some important Lemmas which will be used frequently in our proof.

\begin{lemma}\cite{amj}\label{zhen1}
Let constants $l$, $a$ and $b$ satisfy the relation $\frac{1}{l}=\frac{1}{a}+\frac{1}{b}$ and  $1\leq a,\ b, \ l\leq \infty$. $ \forall s\geq 1$, if $f, g \in W^{s,a} \cap  W^{s,b}(\Omega)$, then we have
\begin{equation}\begin{split}\label{ku11}
&|D^s(fg)-f D^s g|_l\leq C_s\big(|\nabla f|_a |D^{s-1}g|_b+|D^s f|_b|g|_a\big),
\end{split}
\end{equation}
\begin{equation}\begin{split}\label{ku22}
&|D^s(fg)-f D^s g|_l\leq C_s\big(|\nabla f|_a |D^{s-1}g|_b+|D^s f|_a|g|_b\big),
\end{split}
\end{equation}
where $C_s> 0$ is a constant only depending on $s$.
\end{lemma}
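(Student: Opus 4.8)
The plan is to prove both (\ref{ku11}) and (\ref{ku22}) by the classical Leibniz-expansion and interpolation argument, treating them in parallel and distinguishing them only at the final bookkeeping step. First I would reduce matters to estimating a single product. Fixing a multi-index $\alpha$ with $|\alpha|=s$ and applying the Leibniz rule gives
\[
\partial^\alpha(fg)-f\,\partial^\alpha g=\sum_{0<\beta\le\alpha}\binom{\alpha}{\beta}\,\partial^\beta f\,\partial^{\alpha-\beta}g,
\]
so that the commutator $D^s(fg)-fD^sg$ is a finite linear combination of terms $\nabla^j f\,\nabla^{s-j}g$ with $1\le j\le s$. The essential feature is that every such term carries at least one derivative on $f$ (hence $j\ge1$) and therefore at most $s-1$ derivatives on $g$; it thus suffices to bound $|\nabla^j f\,\nabla^{s-j}g|_l$ for each $j\in\{1,\dots,s\}$ by the right-hand side, with the endpoints $j=1$ and $j=s$ producing exactly the two displayed quantities.

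For a fixed intermediate $j$ I would apply H\"older's inequality with exponents $p,q$ satisfying $\frac1l=\frac1p+\frac1q$, and then interpolate each factor by the Gagliardo--Nirenberg inequality: on the $f$-side one writes $\nabla^j f$ as an interpolant of $\nabla f$ and $\nabla^s f$ with weight $\theta=\frac{j-1}{s-1}$, and on the $g$-side one writes $\nabla^{s-j}g$ as an interpolant of $g$ and $\nabla^{s-1}g$ with weight $\tau=\frac{s-j}{s-1}=1-\theta$. For (\ref{ku11}) the exponents are taken (in scale-invariant form) as $\frac1p=\frac{j-1}{s-1}\frac1b+\frac{s-j}{s-1}\frac1a$ and $\frac1q=\frac{s-j}{s-1}\frac1b+\frac{j-1}{s-1}\frac1a$, which indeed sum to $\frac1a+\frac1b=\frac1l$; this yields
\[
|\nabla^j f\,\nabla^{s-j}g|_l\le C\,\big(|\nabla f|_a|\nabla^{s-1}g|_b\big)^{1-\theta}\big(|\nabla^s f|_b|g|_a\big)^{\theta}.
\]
Young's inequality $X^{1-\theta}Y^\theta\le(1-\theta)X+\theta Y$ then collapses every intermediate term onto the sum of the two endpoint products, giving (\ref{ku11}). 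The variant (\ref{ku22}) follows from the same scheme with the H\"older split taken directly as $p=a$, $q=b$: now the $f$-factor is interpolated between $\nabla f$ and $\nabla^s f$ both measured in $L^a$, and the $g$-factor between $g$ and $\nabla^{s-1}g$ both in $L^b$, so that Young's inequality produces the cross term $|\nabla^s f|_a|g|_b$ in place of $|\nabla^s f|_b|g|_a$.

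The main obstacle is purely the exponent bookkeeping at the endpoints of the range $1\le a,b,l\le\infty$. When $a=\infty$ or $b=\infty$ (or $l=\infty$) the Gagliardo--Nirenberg inequalities degenerate and must be replaced by their limiting forms, and one has to verify that the weights $\theta,\tau$ remain in $[0,1]$ and that the scaling relations for $p,q$ are not violated; the case $s=1$ is immediate, since then the commutator reduces to $g\,\nabla f$ and H\"older alone suffices, so for the interpolation one may assume $s\ge2$ and the denominator $s-1$ causes no difficulty. Because $\Omega$ is a bounded domain (or $\R^3$), the interpolation inequalities are applied either after a Sobolev extension or with the usual additive lower-order correction, the extra terms being absorbed into $|\nabla f|_a$, $|g|_a$ and their $b$-analogues using the hypothesis $f,g\in W^{s,a}\cap W^{s,b}(\Omega)$. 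Once these endpoint and domain technicalities are dispatched, the estimate is immediate from the Leibniz--H\"older--Gagliardo-Nirenberg--Young chain above.
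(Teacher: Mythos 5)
The paper does not prove this lemma; it simply cites Majda \cite{amj} and omits the argument. Your Leibniz--H\"older--Gagliardo--Nirenberg--Young chain is exactly the classical proof given in that reference (the exponent bookkeeping checks out: the derivative balance $j=1+\theta(s-1)$ makes the scale-invariant relations for $p,q$ exact, so the interpolation weights sum correctly to $\tfrac1l=\tfrac1a+\tfrac1b$), and your treatment of the endpoints $j=1$, $j=s$ and of the degenerate exponents is the standard one, so the proposal is correct and consistent with the paper's intended source.
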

The proof can be seen in Majda \cite{amj}, here we omit it.
The following one is some Sobolev inequalities obtained from the   well-known Gagliardo-Nirenberg inequality:

\begin{lemma}\label{gag}
For  $n\in (3,\infty)$, there exists some generic constant $C> 0$ that may depend  $n$ such that for $f\in D^1_0(\Omega)$,  $g\in D^1_0\cap D^2(\Omega)$  and $h \in W^{1,n}(\Omega)$, we have
\begin{equation}\label{gaga1}
\begin{split}
|f|_6\leq C|f|_{D^1_0},\qquad |g|_{\infty}\leq C|g|_{D^1_0\cap D^2}, \qquad |h|_{\infty}\leq C\|h\|_{W^{1,n}}.
\end{split}
\end{equation}
\end{lemma}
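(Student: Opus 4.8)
The three bounds in \eqref{gaga1} are, respectively, the critical Sobolev embedding $D^1_0\hookrightarrow L^6$ in dimension three, Morrey's embedding $W^{1,n}\hookrightarrow L^\infty$ for $n>3$, and an $L^\infty$ interpolation estimate in the scale $D^1_0\cap D^2$. The plan is to establish the first and third directly from the classical Gagliardo--Nirenberg--Sobolev and Morrey inequalities, and then to deduce the middle one by interpolating between them.

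For the first estimate I would start from the homogeneous Sobolev inequality in $\R^3$, namely $|f|_{L^6(\R^3)}\le C|\nabla f|_{L^2(\R^3)}$, which is the critical case $p=2$, $p^*=\frac{3\cdot 2}{3-2}=6$ of the Gagliardo--Nirenberg--Sobolev inequality. Since any $f\in D^1_0(\Omega)$ satisfies $f|_{\partial\Omega}=0$ (or decays at infinity when $\Omega=\R^3$), its extension by zero to all of $\R^3$ lies in the homogeneous space $D^{1,2}(\R^3)$ with unchanged gradient norm, so the inequality transfers to $\Omega$ with a constant independent of $\Omega$, both sides being invariant under scaling. This gives $|f|_6\le C|f|_{D^1_0}$.

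For the third estimate, since $n>3$ equals the spatial dimension, Morrey's inequality yields the embedding $W^{1,n}(\Omega)\hookrightarrow C^{0,1-3/n}(\overline{\Omega})\hookrightarrow L^\infty(\Omega)$, whence $|h|_\infty\le C\|h\|_{W^{1,n}}$; on a bounded smooth domain this uses a standard Sobolev extension operator, and on $\R^3$ it is Morrey's inequality in its usual form. The middle estimate I would then obtain from the Gagliardo--Nirenberg interpolation
\[
|g|_\infty\le C\,|\nabla^2 g|_2^{1/2}\,|g|_6^{1/2},
\]
which is the admissible case $d=3$, $m=2$, $r=2$, $q=6$, $\theta=\tfrac12$ of the interpolation inequality. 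Substituting the first estimate $|g|_6\le C|\nabla g|_2$ and applying Young's inequality $ab\le\tfrac12(a^2+b^2)$ gives
\[
|g|_\infty\le C\,|\nabla^2 g|_2^{1/2}\,|\nabla g|_2^{1/2}\le C\big(|\nabla g|_2+|\nabla^2 g|_2\big)=C|g|_{D^1_0\cap D^2}.
\]
Equivalently, one may apply the first estimate componentwise to $\nabla g$ to obtain $\nabla g\in L^6$ and then invoke the third estimate with $n=6$.

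There is no genuine analytic obstacle here, as these are classical embeddings; the only point requiring care is the uniformity of the constants in the domain $\Omega$, which may be bounded or all of $\R^3$. This is precisely why the hypotheses are phrased in the homogeneous spaces $D^1_0$ and $D^k$: the zero boundary trace built into $D^1_0$ permits zero-extension to $\R^3$, while the homogeneity of the seminorms $|\cdot|_{D^k}$ makes the relevant inequalities scale-invariant, so every constant depends only on $n$ and the dimension, as asserted.
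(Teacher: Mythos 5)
Your proof is correct and takes essentially the approach the paper intends: the paper states this lemma without proof, as a direct consequence of the classical Gagliardo--Nirenberg, Sobolev and Morrey embeddings, which is exactly what you have written out (critical Sobolev for the $L^6$ bound, Morrey for $W^{1,n}\hookrightarrow L^\infty$, and interpolation for the middle estimate). One small caveat: on a bounded domain your alternative remark (applying the $L^6$ estimate componentwise to $\nabla g$, or claiming constants depend only on $n$ and the dimension) is not literally available, since $\nabla g$ has no zero trace and the extension/Poincar\'e constants depend on $\Omega$; there the middle estimate should be routed through Poincar\'e plus the full-norm embedding $H^2(\Omega)\hookrightarrow L^\infty(\Omega)$, which is what your main interpolation argument effectively accomplishes and which suffices for the lemma as stated.
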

The next lemma is important in the derivation of our local a priori estimate for the higher order term of $u$, which can be seen in the Remark 1 of \cite{bjr}.
\begin{lemma}\label{bei}
If $h(t,x)\in L^2(0,T; L^2)$, then there exists a sequence $s_k$ such that
$$
s_k\rightarrow 0, \quad \text{and}\quad s_k |h(s_k,x)|^2_2\rightarrow 0, \quad \text{as} \quad k\rightarrow\infty.
$$
\end{lemma}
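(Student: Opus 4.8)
The statement is a simple measure-theoretic fact about functions in $L^2(0,T;L^2)$, so no PDE machinery is needed. The plan is to argue by contradiction. Set $\phi(s)=|h(s,x)|_2^2$, which is a nonnegative function in $L^1(0,T)$ by the hypothesis $h\in L^2(0,T;L^2)$, since $\int_0^T\phi(s)\,\dif s=\int_0^T|h(s,x)|_2^2\,\dif s=\|h\|_{L^2(0,T;L^2)}^2<\infty$. We want to produce a sequence $s_k\to 0$ along which $s_k\phi(s_k)\to 0$.

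Suppose, for contradiction, that no such sequence exists. Then there is a constant $c>0$ and a threshold $\delta>0$ such that $s\phi(s)\geq c$ for almost every $s\in(0,\delta)$; equivalently $\phi(s)\geq c/s$ on $(0,\delta)$. The heart of the argument is that this forces $\phi$ to be non-integrable near the origin: we estimate
\begin{equation*}
\int_0^\delta \phi(s)\,\dif s \geq \int_0^\delta \frac{c}{s}\,\dif s = +\infty,
\end{equation*}
since $\int_0^\delta s^{-1}\,\dif s$ diverges. This contradicts $\phi\in L^1(0,T)$, and hence the desired sequence must exist.

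The only point requiring slight care is converting the failure of the limit into a pointwise lower bound valid almost everywhere on a full neighborhood of $0$. To make this rigorous I would formulate the contradiction hypothesis precisely: the conclusion fails exactly when $\liminf_{s\to 0^+} s\phi(s)>0$ fails to be zero, i.e.\ when $\liminf_{s\to 0^+} s\phi(s)=c>0$. By definition of $\liminf$, for any $c'\in(0,c)$ there is $\delta>0$ with $s\phi(s)\geq c'$ for all $s\in(0,\delta)$ (for those $s$ where $\phi$ is defined, which is almost every $s$). This yields the pointwise bound $\phi(s)\geq c'/s$ a.e.\ on $(0,\delta)$, and the integral comparison above finishes the proof.

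I do not anticipate a genuine obstacle here; the result is elementary. The one step that deserves explicit attention, rather than being glossed, is the passage from ``$s_k\phi(s_k)\to 0$ fails for every sequence $s_k\to 0$'' to the uniform lower bound $\liminf_{s\to 0^+}s\phi(s)>0$. This equivalence is immediate from the sequential characterization of $\liminf$: if every sequence $s_k\to 0$ gave $\liminf_k s_k\phi(s_k)>0$, then in particular one could not extract a subsequence realizing the value $0$, which is precisely the statement that the infimum limit is strictly positive. Once this is established, the divergence of the harmonic-type integral $\int_0^\delta s^{-1}\,\dif s$ against the finiteness of $\int_0^T\phi\,\dif s$ delivers the contradiction cleanly.
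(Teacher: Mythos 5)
Your proof is correct. Note that the paper itself gives no proof of this lemma --- it simply cites Remark~1 of \cite{bjr} --- and your argument (negating the conclusion to get $\liminf_{s\to 0^+} s\,|h(s)|_2^2 = c > 0$, hence $|h(s)|_2^2 \geq c'/s$ a.e.\ near $0$, contradicting integrability of $s \mapsto |h(s)|_2^2$ since $\int_0^\delta s^{-1}\,\dif s = \infty$) is exactly the standard justification that the cited remark relies on, with the two delicate points --- the sequential characterization of the $\liminf$ and the fact that $\phi(s)=|h(s)|_2^2$ is only defined almost everywhere --- handled properly.
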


Based on  Harmonic analysis,  we introduce a regularity estimate result for Lam$\acute{ \text{e}}$ operator
\begin{equation}\label{ok}
-\mu\triangle u-(\mu+\lambda)\nabla \text{div}u =Lu=F \quad \text{in}\quad \Omega,  \quad  u\rightarrow 0 \quad \text{as} \ |x|\rightarrow \infty.
\end{equation}
We define $ u\in D^{1,q}_0(\Omega)$ means that $u \in D^{1,q}(\Omega)$ with $u|_{\partial \Omega}=0$.
\begin{lemma}\cite{harmo}\label{zhenok}
Let  $u\in D^{1,l}_0$   with $1< l< \infty$ be a weak solution to system (\ref{ok}),  if $\Omega= \mathbb{R}^3$, we have
$$
|u|_{D^{k+2,l}(\mathbb{R}^3)} \leq C |F|_{D^{k,l}(\mathbb{R}^3)};
$$
if $\Omega$  is a bounded domain with smooth boundary, we have
$$
|u|_{D^{k+2,l}(\Omega)} \leq C\big( |F|_{D^{k,l}(\Omega)}+|u|_{D^{1,l}_0(\Omega)}\big),
$$
where the constant   $C$ depending only on $\mu$,  $\lambda$  and $l$.
\end{lemma}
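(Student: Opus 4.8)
The plan is to prove both estimates from the strong ellipticity of $L$, treating the whole space by Fourier multipliers and the bounded domain by localization together with the Agmon--Douglis--Nirenberg (ADN) theory. First, for $\Omega=\R^3$, I would take the Fourier transform of (\ref{ok}). Since $L$ has constant coefficients, $\widehat{Lu}(\xi)=M(\xi)\hat u(\xi)$ with symbol
$$M(\xi)=\mu|\xi|^2 I_3+(\mu+\lambda)\,\xi\otimes\xi,$$
so that $\hat u=M(\xi)^{-1}\hat F$; the task is to show that this solution formula produces the claimed gain of two derivatives in $L^l$.

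Second, I would verify invertibility of $M(\xi)$ for $\xi\neq0$ directly from (\ref{eq:1.5}). The matrix $M(\xi)$ has eigenvalue $\mu|\xi|^2$ on the plane orthogonal to $\xi$ and eigenvalue $(2\mu+\lambda)|\xi|^2$ along $\xi$; since $\mu>0$ and $2\mu+\lambda\geq\frac{4}{3}\mu>0$, both are strictly positive, so $M(\xi)$ is invertible with
$$M(\xi)^{-1}=\frac{1}{\mu|\xi|^2}\Big(I_3-\frac{\mu+\lambda}{2\mu+\lambda}\,\frac{\xi\otimes\xi}{|\xi|^2}\Big)$$
by the Sherman--Morrison identity. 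The multiplier sending $F$ to the second derivatives $\partial_i\partial_j u$ is then $m_{ij}(\xi)=\xi_i\xi_j\,M(\xi)^{-1}$, which is homogeneous of degree $0$ and smooth on $\R^3\setminus\{0\}$, hence satisfies the Mikhlin--H\"ormander condition. The Calder\'on--Zygmund multiplier theorem gives $|\nabla^2 u|_l\leq C|F|_l$ for every $1<l<\infty$. For higher $k$ I would differentiate (\ref{ok}) $k$ times; since constant-coefficient multipliers commute with $\nabla^k$, the same bound yields $|u|_{D^{k+2,l}}\leq C|F|_{D^{k,l}}$, with $C=C(\mu,\lambda,l)$.

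For the bounded domain I would use a finite cover of $\overline{\Omega}$ by interior balls and boundary charts with a subordinate partition of unity. On interior patches the whole-space multiplier estimate applies after multiplying $u$ by a cutoff, the cutoff derivatives producing lower-order terms bounded by $|u|_{D^{1,l}_0}$. On boundary patches I would flatten $\partial\Omega$ and reduce, after freezing the (here constant) coefficients, to the half-space Dirichlet problem for $L$; the essential point is that the Dirichlet condition $u|_{\partial\Omega}=0$ satisfies the Lopatinski--Shapiro complementing condition for the strongly elliptic system $L$, which is exactly what licenses the ADN a priori estimate near the boundary. Summing the local estimates and absorbing the commutator terms yields $|u|_{D^{k+2,l}}\leq C\big(|F|_{D^{k,l}}+|u|_{D^{1,l}_0}\big)$.

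The routine parts are the symbol computation and the Mikhlin bound; the genuine obstacle is the boundary analysis for the bounded domain, namely verifying the complementing condition for the Lam\'e system under Dirichlet data and running the ADN half-space estimates, together with tracking that every error term arising from the cutoffs and the boundary flattening is controlled by the single lower-order quantity $|u|_{D^{1,l}_0}$ that appears on the right-hand side.
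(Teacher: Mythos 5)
Your proposal is correct and is in substance the same argument the paper relies on: the paper offers no proof of its own for this lemma, deferring instead to ``classical estimates from Harmonic analysis'' in \cite{CK3}, \cite{harmo}, \cite{zif}, and your Fourier-multiplier/Mikhlin argument for $\Omega=\mathbb{R}^3$ together with the cutoff--localization and ADN boundary estimate for bounded $\Omega$ is precisely the standard proof behind those citations. The only detail worth flagging is the routine identification step in the whole-space case (a given weak solution $u\in D^{1,l}_0$ agrees with the multiplier-constructed solution, via a Liouville argument using the decay at infinity), which your sketch implicitly assumes.
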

\begin{proof}
The  proof can be obtained via the classical estimates from Harmonic analysis, which can be seen in \cite{CK3}  \cite{harmo} or \cite{zif}.
\end{proof}

We also show some results obtained via the Aubin-Lions Lemma.
\begin{lemma}\cite{jm}\label{aubin} Let $X_0$, $X$ and $X_1$ be three Banach spaces with $X_0\subset X\subset X_1$. Suppose that $X_0$ is compactly embedded in $X$ and that $X$ is continuously embedded in $X_1$.\\[1pt]

I) Let $G$ be bounded in $L^p(0,T;X_0)$ where $1\leq p < \infty$, and $\frac{\partial G}{\partial t}$ be bounded in $L^1(0,T;X_1)$. Then $G$ is relatively compact in $L^p(0,T;X)$.\\[1pt]

II) Let $F$ be bounded in $L^\infty(0,T;X_0)$  and $\frac{\partial F}{\partial t}$ be bounded in $L^l(0,T;X_1)$ with $l>1$. Then $F$ is relatively compact in $C(0,T;X)$.
\end{lemma}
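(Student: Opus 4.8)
The engine of both parts is the interpolation (Ehrling) inequality: because $X_0$ embeds compactly into $X$ and $X$ embeds continuously into $X_1$, for every $\eta>0$ there is $C_\eta>0$ such that
\[
\|v\|_X\le \eta\|v\|_{X_0}+C_\eta\|v\|_{X_1},\qquad v\in X_0 .
\]
I would establish this first by contradiction: if it failed for some $\eta_0>0$, one produces $v_n$ with $\|v_n\|_{X_0}=1$, $\|v_n\|_{X_1}\to 0$ and $\|v_n\|_X>\eta_0$; the compact embedding $X_0\hookrightarrow\hookrightarrow X$ extracts a subsequence with $v_n\to v$ in $X$, while the continuous embedding $X\hookrightarrow X_1$ forces $v=0$, contradicting $\|v\|_X\ge\eta_0$. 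The whole scheme then consists in bounding the $X$-norm by a small multiple of the (merely bounded) $X_0$-norm plus a controllable multiple of the $X_1$-norm, which is the norm in which time regularity is available.

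For Part I, I would verify the vector-valued Riesz--Fr\'echet--Kolmogorov compactness criterion in $L^p(0,T;X)$: it suffices that (i) for each $0<t_1<t_2<T$ the set of time averages $\{\int_{t_1}^{t_2}u(t)\,dt:u\in G\}$ be relatively compact in $X$, and (ii) $\|\tau_h u-u\|_{L^p(0,T-h;X)}\to 0$ as $h\to 0$ uniformly in $u\in G$, where $\tau_h u(t)=u(t+h)$. Condition (i) is immediate: by H\"older $\int_{t_1}^{t_2}u\,dt$ is uniformly bounded in $X_0$, so the compact embedding gives relative compactness in $X$. For (ii) I apply the interpolation inequality pointwise in $t$ and take $L^p$ norms,
\[
\|\tau_h u-u\|_{L^p(0,T-h;X)}\le \eta\,\|\tau_h u-u\|_{L^p(0,T-h;X_0)}+C_\eta\,\|\tau_h u-u\|_{L^p(0,T-h;X_1)} ;
\]
the first term is $\le 2\eta\|u\|_{L^p(0,T;X_0)}$, uniformly small, and for the second I write $u(t+h)-u(t)=\int_t^{t+h}\partial_t u$, so that with $g:=\|\partial_t u\|_{X_1}\in L^1(0,T)$ one has $\|u(t+h)-u(t)\|_{X_1}\le\phi_h(t):=\int_t^{t+h}g$. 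Since $\|\phi_h\|_{L^\infty}\le\|g\|_{L^1}$ and $\|\phi_h\|_{L^1}\le h\|g\|_{L^1}$ are both uniformly controlled, the interpolation $\|\phi_h\|_{L^p}^p\le\|\phi_h\|_{L^\infty}^{p-1}\|\phi_h\|_{L^1}$ shows $\|\phi_h\|_{L^p}\to 0$ uniformly. Choosing $\eta$ small and then $h$ small yields (ii).

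For Part II I would instead use the Arzel\`a--Ascoli theorem in $C([0,T];X)$. Uniform boundedness and pointwise relative compactness of $\{u(t):u\in F\}$ both come from the $L^\infty(0,T;X_0)$ bound together with $X_0\hookrightarrow\hookrightarrow X$. The role of the hypothesis $l>1$ is precisely to furnish uniform equicontinuity: H\"older gives $\|u(t)-u(s)\|_{X_1}\le\int_s^t\|\partial_t u\|_{X_1}\le C|t-s|^{1-1/l}$, i.e. uniform H\"older continuity into $X_1$, and the interpolation inequality then yields
\[
\|u(t)-u(s)\|_X\le 2\eta\,\|u\|_{L^\infty(0,T;X_0)}+C_\eta C|t-s|^{1-1/l}.
\]
Given $\varepsilon>0$ I first fix $\eta$ so the first term is $<\varepsilon/2$ and then $\delta$ so the second is $<\varepsilon/2$ for $|t-s|<\delta$, uniformly over $F$; Arzel\`a--Ascoli then gives relative compactness in $C([0,T];X)$.

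The substantive obstacle is the time-translation estimate (ii) in Part I when $\partial_t G$ is bounded only in $L^1$: the direct computation controls only the $L^1$-in-time translation, and upgrading to the $L^p$ statement for $p>1$ is exactly where the interpolation $\|\phi_h\|_{L^p}^p\le\|\phi_h\|_{L^\infty}^{p-1}\|\phi_h\|_{L^1}$ and the uniform $L^\infty$ bound on $\phi_h$ are needed. Beyond this, the points requiring care are the proof of the Ehrling inequality and the verification that every constant is uniform across the family, so that the compactness criteria genuinely apply; the rest is routine bookkeeping.
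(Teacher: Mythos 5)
The paper does not actually prove this lemma: it is quoted verbatim from Simon \cite{jm}, so there is no in-paper argument to compare against. Judged on its own, your sketch is correct, and it is in fact the standard route — essentially Simon's own — for both parts: the Ehrling inequality $\|v\|_X\le \eta\|v\|_{X_0}+C_\eta\|v\|_{X_1}$ derived from the compact embedding, combined with the translation/averaging compactness criterion in $L^p(0,T;X)$ for part I, and Arzel\`a--Ascoli for part II. The quantitative heart of part I, namely $\|\phi_h\|_{L^p}^p\le\|\phi_h\|_{L^\infty}^{p-1}\|\phi_h\|_{L^1}\le h\|g\|_{L^1}^p$, is exactly what lets the $L^1$ bound on $\partial_t G$ suffice, and your choice ``first $\eta$, then $h$'' (resp.\ ``first $\eta$, then $\delta$'') is the right order of quantifiers. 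Two points deserve explicit mention. First, the criterion you invoke in part I — relative compactness in $L^p(0,T;X)$ follows from compactness of the time averages plus uniform smallness of $\|\tau_h u-u\|_{L^p(0,T-h;X)}$ — is itself Theorem 1 of Simon's paper, i.e.\ the deepest ingredient of your proof is imported as a black box rather than proved; that is legitimate, but it should be cited as such. Second, in part II the bound $\|u(t)\|_{X_0}\le M$ coming from $L^\infty(0,T;X_0)$ holds only for a.e.\ $t$, so both the pointwise relative compactness of $\{u(t):u\in F\}$ and the estimate $\|u(t)-u(s)\|_{X_0}\le 2M$ require the standard upgrade: work on the full-measure set, use the absolute continuity of $u$ into $X_1$ and the compactness in $X$ of the $X$-closure of the $X_0$-ball to extend the modulus of continuity to all $s,t$ and to conclude $F\subset C([0,T];X)$. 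These are routine repairs, not gaps in the idea, so the proposal stands.
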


Finally, for $(H,u) \in C^2(\Omega)$,  there are some formulas based on  $\text{div}H=0$:
\begin{equation}\label{zhoumou}
\begin{cases}
\displaystyle
\text{rot}(u\times H)=(H\cdot \nabla)u-(u\cdot \nabla)H-H\text{div}u,\\[8pt]
\displaystyle
\text{rot}H\times H
=\text{div}\Big(H\otimes H-\frac{1}{2}|H|^2I_3\Big)=-\frac{1}{2}\nabla |H|^2+H\cdot \nabla H.
\end{cases}
\end{equation}

\section{Well-posedness of classical solutions}

 In order to prove
 the local existence of classical solutions to the original nonlinear problem, we need to consider the  following linearized problem:
\begin{equation}
\label{eq:1.2rfv}
\begin{cases}
\displaystyle
H_t+v\cdot \nabla H+(\text{div}vI_3-\nabla v)H=0\quad \text{in}\ (0,T)\times \Omega,\\[6pt]
\displaystyle
\text{div}H=0\quad \text{in}\ (0,T)\times \Omega,\\[6pt]
\displaystyle
\rho_t+\text{div}(\rho v)=0\quad \text{in}\ (0,T)\times \Omega,\\[6pt]
\displaystyle
\rho u_t+\rho v\cdot\nabla v
  +\nabla P+Lu=\mu_0\text{rot}H\times H\quad \text{in}\ (0,T)\times \Omega,\\[6pt]
(H,\rho,u)|_{t=0}=(H_0(x),\rho_0(x),u_0(x))\quad \text{in} \ \Omega, \\[6pt]
(H,\rho,u,P)\rightarrow (0,\overline{\rho},0,\overline{P}) \quad \text{as } \quad |x|\rightarrow \infty,\quad t> 0,
\end{cases}
\end{equation}
where  $(H_0(x),\rho_0(x),u_0(x))$ satisfies  (\ref{th78})-(\ref{th79})  and $v(t,x)\in \mathbb{R}^3$ is a known vector 
\begin{equation}\label{ghk1}
\begin{split}
&v\in C([0,T];D^1_0\cap D^2)\cap  L^2([0,T];D^3)\cap L^{p_0}([0,T];D^{3,q}),\ v_t\in  L^2([0,T];D^1_0),\\
&t^{\frac{1}{2}}v\in L^\infty([0,T];D^3),\ t^{\frac{1}{2}}v_t\in L^\infty([0,T];D^1_0) \cap L^2([0,T];D^2),\\
& tv\in L^\infty([0,T];D^{3,q}),\ tv_t\in L^\infty([0,T];D^2),\ tv_{tt}\in L^2([0,T];D^1_0),\ v(0,x)=u_0.
\end{split}
\end{equation}

\subsection{Unique solvability of (\ref{eq:1.2rfv}) away from  vacuum}\ \\

First we give  the following existence of  classical solution $(H,\rho,u)$ to (\ref{eq:1.2rfv})  by the standard methods at least for the case that the initial mass density is away from vacuum.

\begin{lemma}\label{lem1}
Assume in addition to (\ref{th78})-(\ref{th79}) that $\rho_0\geq \delta$ for some  constant $\delta>0$.
Then there exists a unique classical solution $(H,\rho,u)$ to (\ref{eq:1.2rfv}) such that
\begin{equation*}\begin{split}
&(H,\rho-\overline{\rho},P-\overline{P})\in C([0,T];H^2\cap W^{2,q}),\ (H_t,\rho_t,P_t)\in C([0,T];H^1),\\
& t^{\frac{1}{2}}(H_t,\rho_t,P_t)\in L^\infty([0,T];D^{1,q}),\ u\in C([0,T];H^2)\cap  L^2([0,T];D^3)\cap L^{p_0}([0,T];D^{3,q}),\\
&u_t\in  L^2([0,T];D^1_0)\cap  L^\infty([0,T];L^2),\quad t^{\frac{1}{2}}u\in L^\infty([0,T];D^3),\\
&t^{\frac{1}{2}}u_t\in L^\infty([0,T];D^1_0),\quad t^{\frac{1}{2}}u_{tt}\in L^2([0,T];L^2), \ tu\in L^\infty([0,T]; D^{3,q}),\\
&tu_{t}\in L^\infty([0,T];L^2),\ tu_{tt}\in L^2([0,T];D^1_0)\cap L^\infty([0,T];L^2),\ tu_{ttt}\in L^\infty([0,T];H^{-1}),
\end{split}
\end{equation*}
and  $\rho\geq \underline{\delta}$ on $[0,T]\times \mathbb{R}^3$
for some positive constant $\underline{\delta}$.
\end{lemma}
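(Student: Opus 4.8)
The plan is to exploit the fact that, once the transport field $v$ is fixed and $\rho_0\ge\delta>0$, the linear system (\ref{eq:1.2rfv}) decouples and can be solved in three successive, essentially standard, steps, after which the regularity in the statement is produced by a hierarchy of time-weighted energy estimates. First I would solve for the magnetic field. The first equation of (\ref{eq:1.2rfv}) is a linear first-order transport system $H_t+v\cdot\nabla H+(\text{div}v\,I_3-\nabla v)H=0$ with known coefficient $v$; since (\ref{ghk1}) gives $\nabla v\in L^2([0,T];H^2)\hookrightarrow L^2([0,T];L^\infty)$, so that $\int_0^T|\nabla v|_\infty\,dt<\infty$, the flow of $v$ is well defined and the method of characteristics yields a unique $H$. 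Differentiating in $x$, applying the commutator estimate (\ref{ku11}) of Lemma \ref{zhen1}, and closing with Gronwall's inequality produces $H\in C([0,T];H^2\cap W^{2,q})$ together with the claimed bounds on $H_t$. The constraint $\text{div}H=0$ propagates because a direct computation (equivalently, the identities (\ref{zhoumou})) shows that $\text{div}H$ itself solves $(\text{div}H)_t+v\cdot\nabla(\text{div}H)+\text{div}v\,\text{div}H=0$, so $\text{div}H\equiv0$ follows from $\text{div}H_0=0$.

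The density and pressure are handled the same way. Integrating the continuity equation $\rho_t+v\cdot\nabla\rho+\rho\,\text{div}v=0$ along the characteristics of $v$ gives $\rho(t,X(t,x))=\rho_0(x)\exp\!\big(-\int_0^t\text{div}v\,ds\big)$, whence the two-sided bound, and in particular $\rho\ge\underline{\delta}:=\delta\exp\!\big(-\int_0^T|\text{div}v|_\infty\,dt\big)>0$ on $[0,T]\times\Omega$; the transport energy estimates then give $\rho-\overline{\rho}\in C([0,T];H^2\cap W^{2,q})$ with the stated regularity of $\rho_t$. Since $P=A\rho^\gamma$ solves $P_t+v\cdot\nabla P+\gamma P\,\text{div}v=0$, the same bounds transfer to $P-\overline{P}$ and $P_t$.

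With $H$, $\rho$, $P$ now fixed, the momentum equation is the linear parabolic system
\[
\rho u_t+Lu=F:=-\rho v\cdot\nabla v-\nabla P+\mu_0\,\text{rot}H\times H,\qquad u|_{t=0}=u_0,\qquad u|_{\partial\Omega}=0,
\]
whose principal part $L$ is strongly elliptic by (\ref{eq:1.5}) and whose coefficient $\rho$ is bounded above and below by $\underline{\delta}$. I would construct $u$ by a Galerkin approximation in $x$ (equivalently, by the theory of linear parabolic systems with bounded coefficients): testing against $u$ and $u_t$ gives the basic bounds $u\in L^\infty([0,T];D^1_0\cap D^2)$ and $u_t\in L^\infty([0,T];L^2)\cap L^2([0,T];D^1_0)$, and then rewriting the equation as $Lu=F-\rho u_t$ and invoking the elliptic regularity of Lemma \ref{zhenok} bootstraps the spatial regularity to $u\in C([0,T];H^2)\cap L^2([0,T];D^3)\cap L^{p_0}([0,T];D^{3,q})$, the $D^{3,q}$ bound being the source of the exponent $p_0$. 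Uniqueness of $u$ is a one-line energy estimate on the difference of two solutions, and uniqueness of $H$ and $\rho$ follows from Gronwall applied to their transport equations.

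The bulk of the work, and the principal obstacle, is the hierarchy of higher-order \emph{time-weighted} estimates, such as $t^{1/2}u\in L^\infty([0,T];D^3)$, $t^{1/2}u_{tt}\in L^2([0,T];L^2)$, $tu\in L^\infty([0,T];D^{3,q})$ and $tu_{ttt}\in L^\infty([0,T];H^{-1})$. These are produced by differentiating the $u$-equation once and twice in $t$ and testing against $u_t$, $u_{tt}$, $u_{ttt}$; the powers $t^{1/2}$ and $t$ compensate for the fact that $u_0$ lies only in $D^1_0\cap D^2$, so that the higher norms, which are singular as $t\to0$, become integrable. The value that starts the chain, $u_t(0)=\rho_0^{-1}(F(0)-Lu_0)\in L^2$, is well defined precisely because $\rho_0\ge\delta>0$; the compatibility condition (\ref{th79}) is not needed for mere existence here, but it is exactly what renders the $\sqrt{\rho_0}$-weighted initial bound independent of $\delta$ and thus allows these estimates to survive the vacuum limit carried out in the next subsection. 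Lemma \ref{bei} supplies the vanishing sequence $s_k$ with $s_k|u_{tt}(s_k)|_2^2\to0$ needed to launch the $u_{tt}$ estimate without assuming a priori control of $u_{tt}(0)$, and careful tracking of how each constant depends on $\underline{\delta}$, on $q$, and on the norms in (\ref{ghk1}) closes the lemma.
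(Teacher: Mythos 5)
Your proposal is correct and follows essentially the same route as the paper: the paper also decouples the linearized system, solving the $H$-equation as a linear symmetric hyperbolic (transport) system, obtaining $\rho$ and $P$ from their transport equations (citing Lemma 1 of \cite{guahu}, which is the characteristics argument you spell out, including the lower bound $\rho\ge\underline{\delta}$), and then treating the momentum equation as a linear parabolic system with nondegenerate coefficient $\rho$ (citing Lemma 3 of \cite{guahu} for exactly the energy/elliptic-bootstrap and time-weighted hierarchy you describe). The only difference is that you unpack the details the paper delegates to references, and your added observations (propagation of $\mathrm{div}\,H=0$, the role of $\rho_0\ge\delta$ versus the compatibility condition) are accurate.
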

\begin{proof}
Firstly, we observe the magnetic equations $(\ref{eq:1.2rfv})_1$, it has the form
\begin{equation}\label{ku}
\begin{split}
H_t+\sum_{j=1}^3 A_j \partial_jH+BH=0,
\end{split}
\end{equation}
where $A_j=v_j I_3$ ($j=1,2,3$) are symmetric and $B=\text{div}vI_3-\nabla v$. According to the regularity of $v$ and the standard theory for positive and symmetric hyperbolic system, we easily have the desired conclusions.

Secondly, the existence and regularity of a unique solution $\rho$ to $ (\ref{eq:1.2rfv})_3$ can be obtained essentially according to Lemma 1 in \cite{guahu}. Due to pressure $P$ satisfies the following problem
\begin{equation}\label{pr}
\begin{split}
P_t+v\cdot \nabla P+\gamma P \text{div}v=0, \quad  P_0-\overline{P} \in H^2\cap W^{2,q},
\end{split}
\end{equation}
so we easily have the same conclusions for $P$ via the similar argument as $\rho$.

Finally,  the momentum equations  $ (\ref{eq:1.2rfv})_4$ can be written into
\begin{equation}\label{rst1}
\displaystyle
\rho u_t+Lu=-\nabla P-\rho v\cdot \nabla v+\mu_0\text{rot}H\times H,
\end{equation}
then from Lemma 3 in \cite{guahu},  the desired conclusions is easily obtained.
\end{proof}

\subsection{A priori estimate to the linearized problem away from vacuum}\ \\

Now we want to get some  a priori estimate for the classical solution   $(H,\rho,u)$ to (\ref{eq:1.2rfv})  obtained in Lemma \ref{lem1}, which is independent of the lower bound of the initial mass density $\rho_0$.
For simplicity, we first fix a positive constant $c_0$ sufficiently large that
\begin{equation}\label{houmian}\begin{split}
2+\overline{\rho}+\|(\rho_0-\overline{\rho}, P_0-\overline{P}, H_0)\|_{H^2\cap W^{2,q}}+|u_0|_{D^1_0\cap D^2}+|g_1|_{2}\leq c_0,
\end{split}
\end{equation}
and
\begin{equation}\label{jizhu}\begin{split}
\sup_{0\leq t \leq T^*}|v(t)|^2_{D^1_0\cap D^2}+\int_{0}^{T^*} \Big( |v|^2_{D^3}+|v|^{p_0}_{ D^{3,q}}+|v_t|^2_{D^1_0}\Big)\text{d}t \leq& c_1,\\
\text{ess}\sup_{0\leq t \leq T^*}\Big(t|v_t(t)|^2_{D^1_0}+t|v(t)|^2_{D^3}\Big)+\int_{0}^{T^*} t|v_{t}|^2_{D^2}\text{d}t \leq& c_2,\\
\text{ess}\sup_{0\leq t \leq T^*}\Big(t^2|v(t)|^2_{ D^{3,q}}+t^2|v_t(t)|^2_{D^2}\Big)+\int_{0}^{T^*} t^2|v_{tt}|^2_{D^1_0}\text{d}t \leq& c_3
\end{split}
\end{equation}
for some time $T^*\in (0,T)$ and constants $c_{i}'s$ with $1< c_0\leq c_1 \leq c_2 \leq  c_3 $. Throughout this and next two sections, we denote by $C$ a generic positive constant depending only on fixed constants $\mu$,   $\mu_0$, $T$ and $\lambda$.

Now we give some estimates for the magnetic field $H$.

\begin{lemma}[\textbf{Estimates for magnetic field $H$}]\label{lem:=1}\ \\[2pt]
\begin{equation}
\begin{split}
&\|H(t)\|^2_{H^2\cap W^{2,q}}+ \|H_t(t)\|^2_{1} \leq Cc^4_1, \quad \int_{0}^{t} |H_{tt}|^2_2\text{d}s\leq Cc^3_1,\quad t|H_t(t)|^2_{D^{1,q}}\leq Cc^3_2
\end{split}
\end{equation}
\textrm{for} $0 \leq t \leq T_1=\min(T^*,(1+c_1)^{-1})$.
 \end{lemma}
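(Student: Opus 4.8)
The plan is to derive all four bounds in Lemma~3.3 directly from the linearized magnetic equation $(\ref{eq:1.2rfv})_1$, namely $H_t+v\cdot\nabla H+(\operatorname{div}v\,I_3-\nabla v)H=0$, treating $v$ as a known field subject to the a~priori bounds (\ref{jizhu}) and the initial data controlled by (\ref{houmian}). The essential mechanism is an energy (or $L^p$) estimate obtained by differentiating the equation and testing against the appropriate power of $H$ or its derivatives; the transport structure means there is no gain of regularity from $H$ itself, so every spatial derivative on $H$ must be paid for by a derivative on $v$, and the whole point of the short time $T_1=\min(T^*,(1+c_1)^{-1})$ is to absorb the resulting Grönwall exponentials into constants of the form $Cc_1^k$.

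Let me sketch the steps in order. First I would establish the basic $L^2$ and $H^1$ bounds: multiply $(\ref{eq:1.2rfv})_1$ by $H$ and integrate to get $\frac{d}{dt}|H|_2^2\le C(|\nabla v|_\infty+|\operatorname{div}v|_\infty)|H|_2^2$; since $|\nabla v|_\infty\le C|v|_{D^1_0\cap D^2}\le Cc_1^{1/2}$ by Lemma~2.2, Grönwall over $[0,T_1]$ gives $|H(t)|_2^2\le|H_0|_2^2\exp(Cc_1^{1/2}T_1)$, and $T_1\le(1+c_1)^{-1}$ makes the exponent $O(1)$. Next, applying $D^s$ for $s=1,2$, using the commutator estimate (\ref{ku11}) of Lemma~2.1 to handle $D^s(v\cdot\nabla H)-v\cdot\nabla D^sH$ and the products involving $\nabla v\,H$, I would close a differential inequality of the form $\frac{d}{dt}\|H\|_{H^2}^2\le C(\|v\|_{D^3}+\dots)\|H\|_{H^2}^2$; the $\int_0^{T_1}|v|_{D^3}^2\,dt\le c_1$ control and Cauchy--Schwarz keep the time integral $O(c_1^{1/2}T_1^{1/2})$, again $O(1)$ after the $T_1$ choice, yielding $\|H(t)\|_{H^2}^2\le Cc_1^4$ (the power $4$ reflecting the product of the $c_1$-bounds on the data and on $v$). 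The $W^{2,q}$ bound runs identically using (\ref{ku22}) and the $L^{p_0}(D^{3,q})$ control on $v$. For $\|H_t\|_1$ I would read $H_t$ directly off the equation as $H_t=-v\cdot\nabla H-(\operatorname{div}v\,I_3-\nabla v)H$ and estimate its $H^1$ norm by the already-bounded $\|H\|_{H^2}$ and $\|v\|_{D^1_0\cap D^2}$. For $\int_0^t|H_{tt}|_2^2\,ds$, differentiate the equation once in $t$ to get $H_{tt}=-v_t\cdot\nabla H-v\cdot\nabla H_t-\partial_t(\operatorname{div}v\,I_3-\nabla v)H-(\operatorname{div}v\,I_3-\nabla v)H_t$, bound each term in $L^2$ using $\int_0^{T_1}|v_t|_{D^1_0}^2\,dt\le c_1$ together with the $H$ and $H_t$ bounds, and integrate in time. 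Finally, the weighted bound $t|H_t|_{D^{1,q}}^2\le Cc_2^3$ comes from differentiating in space, multiplying by the weight $t$, and invoking the $\operatorname{ess\,sup}(t|v_t|_{D^1_0}^2+t|v|_{D^3}^2)\le c_2$ part of (\ref{jizhu}).

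The main obstacle, and the place requiring genuine care rather than routine bookkeeping, is the highest-order estimate in $W^{2,q}$ and the weighted $D^{1,q}$ bound for $H_t$. These force the $L^{p_0}([0,T];D^{3,q})$ and the $t$-weighted $D^3$/$D^{1,q}$ information on $v$ to be used with exactly the right H\"older exponents, since $p_0<2$ and one cannot afford to lose integrability in time; matching the commutator estimate (\ref{ku22}) with $l=q$, $a,b$ chosen so that $\frac1q=\frac1a+\frac1b$ against the Sobolev embeddings of Lemma~2.2 is the delicate point. The transport (hyperbolic) nature of $H$ means there is \emph{no} smoothing to spare, so every bound is borderline and the short-time factor $T_1\le(1+c_1)^{-1}$ is what converts the Grönwall factors into the clean constants $Cc_1^4$, $Cc_1^3$, $Cc_2^3$; verifying that these powers of $c_i$ are the correct ones, and in particular that no hidden dependence on the lower bound $\delta$ of the density sneaks in, is the crux of the argument.
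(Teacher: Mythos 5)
Your proposal follows essentially the same route as the paper: an $L^r$-energy estimate ($r\in[2,q]$) on $D^\alpha H$ for $|\alpha|\le 2$ using the symmetric-hyperbolic structure, the commutator bounds of Lemma \ref{zhen1} for $\Theta_1,\Theta_2$, Gr\"onwall with the time-integrated controls $\int_0^{T_1}|v|_{D^{3,q}}\,\dif s$ and $\int_0^{T_1}\|\nabla v\|_2\,\dif s$ from (\ref{keji}), and then reading $H_t$ and $H_{tt}$ directly off the transport equation; the weighted $D^{1,q}$ bound likewise uses the $t$-weighted part of (\ref{jizhu}). This matches the paper's proof in structure and in all the key ingredients, so no further comparison is needed.
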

\begin{proof}
Firstly, let $\alpha=(\alpha_1,\alpha_2,\alpha_3)$ ($|\alpha|\leq 2$) and $\alpha_i=0,1,2$, differentiating $ (\ref{eq:1.2rfv})_1$  $\alpha$ times with respect to $x$, we have
\begin{equation}\label{hyp}\begin{split}
&D^\alpha H_t+\sum_{j=1}^3 A_j \partial_jD^\alpha H+BD^\alpha H \\
=&\big(D^\alpha(BH)-BD^\alpha H\big)+\sum_{j=1}^3 (D^\alpha(A_j \partial_j H)-A_j \partial_jD^\alpha H)=\Theta_1+\Theta_2.
\end{split}
\end{equation}
Then multiplying (\ref{hyp}) by $rD^{\alpha}H|D^{\alpha}H|^{r-2}$ ($r\in [2,q]$) and  integrating  over $\Omega$, we have
\begin{equation}\label{zhenzhen}\begin{split}
\frac{d}{dt}|D^\alpha H|^r_r\leq \Big(\sum_{j=1}^{3}|\partial_{x_j}A_j|_\infty+|B|_\infty\Big)|D^\alpha H|^r_r+|\Theta_1|_r|D^\alpha H|^{r-1}_r+|\Theta_2|_r|D^\alpha H|^{r-1}_r.
\end{split}
\end{equation}

Secondly,  let  $l=r=a$, $b=\infty$ and  $s=|\alpha|=1$  in (\ref{ku22}) of  Lemma \ref{zhen1}, we easily have
\begin{equation}\label{zhen2}\begin{split}
|\Theta_1|_r=|D^\alpha(BH)-BD^\alpha H|_r\leq C|\nabla^2 v|_r|H|_\infty\leq C|\nabla^2 v|_r \|H\|_2;
\end{split}
\end{equation}
 let  $l=r=a$, $b=\infty$ and  $s=|\alpha|=2$  in (\ref{ku22}) of  Lemma \ref{zhen1},  we have
\begin{equation}\label{zhen3}
\begin{split}
|\Theta_1|_r=&|D^\alpha(BH)-BD^\alpha H|_r
\leq C(|\nabla^2 v|_r |\nabla H|_{\infty}+|\nabla^3 v|_r| H|_\infty)\\
\leq&   C\|\nabla^2 v\|_{H^1\cap W^{1,q}} \|H\|_{H^2 \cap W^{2,q} }.
\end{split}
\end{equation}
And similarly, 
 let  $b=\infty$, $l=r=a$ and  $s=|\alpha|=1$  in (\ref{ku22}) of  Lemma \ref{zhen1},  we have
\begin{equation}\label{szhen4}\begin{split}
| D^\alpha(A_j \partial_j H)-A_j \partial_jD^\alpha|_r\leq C|\nabla v|_{r}|\nabla H|_\infty\leq C|\nabla v|_r \| \nabla H\|_{W^{1,q}};
\end{split}
\end{equation}
 let  $a=\infty$, $l=r=b$ and  $s=|\alpha|=2$  in (\ref{ku11}) of  Lemma \ref{zhen1},  we have
\begin{equation}\label{zhen5}\begin{split}
| D^\alpha(A_j \partial_j H)-A_j \partial_jD^\alpha|_r
\leq& C(|\nabla v|_{\infty}|\nabla^2 H|_r+|\nabla^2 v|_r |\nabla H|_{\infty})\\
\leq&  C\|\nabla v\|_2 \|H\|_{ H^2 \cap W^{2,q}}.
\end{split}
\end{equation}
Then combining (\ref{zhenzhen})-(\ref{zhen5}),  according to   Gronwall's inequality, we have
\begin{equation}\label{thzhen1}
\begin{split}
\| H\|_{H^2\cap W^{2,q}} \leq& C\| H_0\|_{H^2\cap W^{2,q}}\text{exp}\Big(\int_0^t \|\nabla v(s)\|_{ H^2\cap W^{2,q}}\text{d}s\Big) \leq Cc_0.
\end{split}
\end{equation}
 for $0\leq t \leq T_1$, where we have used the fact 
\begin{equation}\label{keji}
\begin{split}
&\int_0^t|v(s)|_{D^{3,q}}\text{d}s\leq t^{\frac{1}{q_0}}\Big(\int_0^t |v(s)|^{p_0}_{ D^{3,q}}\text{d}s\Big)^{\frac{1}{p_0}}\leq Cc_1,\ \text{and}\\
&\int_0^t\|\nabla v(s)\|_{2}\text{d}s\leq t^{\frac{1}{2}}\Big(\int_0^t |\nabla v(s)|^2_{2}\text{d}s\Big)^{\frac{1}{2}}\leq C(c_1t+(c_1t)^{\frac{1}{2}})\leq Cc_1,
\end{split}
\end{equation}
and $\frac{1}{p_0}+\frac{1}{q_0}=1$.
Finally, from the magnetic field equations  $ (\ref{eq:1.2rfv})_1$:
$$
H_t=-v\cdot \nabla H-(\text{div}vI_3-\nabla v)H,
$$
we quickly get the desired estimates for $H_t$ and $H_{tt}$.
\end{proof}

Next we give the estimates for the mass density $\rho$ and  pressure $P$.
\begin{lemma}[\textbf{Estimates for the mass density $\rho$ and  pressure $P$}]\label{lem:2}\ \\[4pt]
\begin{equation*}\begin{split}
\|(\rho-\overline{\rho},P-\overline{P})(t)\|_{H^2\cap W^{2,q}}+ \|(\rho_t,P_t)(t)\|_{H^1 \cap L^q}\leq& Cc^2_1,\\
\int_0^t |(\rho_{tt},P_{tt})|^2_2 \text{d}s \leq Cc^3_1, \quad t|(\rho_t,P_t)(t)|^2_{D^{1,q}}\leq& Cc^3_2
\end{split}
\end{equation*}
for  $0\leq t \leq T_1=\min(T^*,C(1+c_1)^{-1})$.
 \end{lemma}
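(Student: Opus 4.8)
The plan is to treat $\varrho:=\rho-\overline{\rho}$ and $\widetilde P:=P-\overline{P}$ as solutions of transport equations with a lower-order source, and then to run the very same differentiate--commute--Gronwall scheme already used for the magnetic field in Lemma \ref{lem:=1}. Since $\overline{\rho}$ and $\overline{P}$ are constants, $(\ref{eq:1.2rfv})_3$ and (\ref{pr}) become
\begin{equation*}
\varrho_t+v\cdot\nabla\varrho+\varrho\,\text{div}\,v=-\overline{\rho}\,\text{div}\,v,\qquad
\widetilde P_t+v\cdot\nabla\widetilde P+\gamma\widetilde P\,\text{div}\,v=-\gamma\overline{P}\,\text{div}\,v,
\end{equation*}
two problems of identical structure; I would carry out the estimates for $\varrho$ in detail and read off those for $\widetilde P$ verbatim.

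First I would derive the spatial bound. For multi-indices $\alpha$ with $|\alpha|\le 2$, apply $D^\alpha$, multiply by $r|D^\alpha\varrho|^{r-2}D^\alpha\varrho$ with $r\in[2,q]$, and integrate over $\Omega$; the transport term contributes $\frac1r\int_\Omega\text{div}\,v\,|D^\alpha\varrho|^r$ after integrating by parts, which is controlled by $|\text{div}\,v|_\infty|D^\alpha\varrho|^r_r$. The commutators $D^\alpha(v\cdot\nabla\varrho)-v\cdot\nabla D^\alpha\varrho$ and $D^\alpha(\varrho\,\text{div}\,v)-\varrho D^\alpha\text{div}\,v$ are bounded by the Moser-type inequalities (\ref{ku11})--(\ref{ku22}) of Lemma \ref{zhen1} with the same choices of $(l,a,b,s)$ as in (\ref{zhen2})--(\ref{zhen5}), while the new source is estimated crudely by $\overline{\rho}\,|D^\alpha\text{div}\,v|_r\le C\overline{\rho}\,\|v\|_{D^3\cap D^{3,q}}$. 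Summing over $|\alpha|\le 2$ and $r\in\{2,q\}$ and applying Gronwall's inequality yields
\begin{equation*}
\|\varrho(t)\|_{H^2\cap W^{2,q}}\le C\Big(\|\varrho_0\|_{H^2\cap W^{2,q}}+\overline{\rho}\int_0^t\|v\|_{D^3\cap D^{3,q}}\,\text{d}s\Big)\exp\Big(C\int_0^t\|\nabla v\|_{H^2\cap W^{2,q}}\,\text{d}s\Big).
\end{equation*}
On $[0,T_1]$ the exponent is bounded by a fixed constant by the same short-time integral estimates as in (\ref{keji}); combined with $\overline{\rho}\le c_0\le c_1$, $\|\varrho_0\|\le c_0$, and $\int_0^t\|v\|_{D^3\cap D^{3,q}}\,\text{d}s\le Cc_1$, this gives $\|\varrho(t)\|_{H^2\cap W^{2,q}}\le Cc_1^2$, and identically for $\widetilde P$.

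The time-derivative bounds would then follow by reading them off the equations. From $\rho_t=-v\cdot\nabla\rho-\rho\,\text{div}\,v$ and the spatial bound just obtained, H\"older and the embeddings of Lemma \ref{gag} give $\|(\rho_t,P_t)\|_{H^1\cap L^q}\le Cc_1^2$. Differentiating once more in time, the dominant contribution to $\rho_{tt}=-v_t\cdot\nabla\rho-v\cdot\nabla\rho_t-\rho_t\,\text{div}\,v-\rho\,\text{div}\,v_t$ is the last term, so $\int_0^t|\rho_{tt}|_2^2\,\text{d}s\le C\|\rho\|_\infty^2\int_0^t|\nabla v_t|_2^2\,\text{d}s+(\text{lower order})\le Cc_1^2\cdot c_1=Cc_1^3$, using $\int_0^{T_1}|v_t|^2_{D^1_0}\,\text{d}s\le c_1$. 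For the weighted estimate, taking $|\nabla(\cdot)|_q$ of the $\rho_t$-equation, the top-order term $\rho\,\nabla\text{div}\,v$ is controlled by $\|\rho\|^2_\infty\,t|v|_{D^{2,q}}^2$; since $q\le 6$ gives $|v|_{D^{2,q}}\le C|v|_{D^2\cap D^3}$, the weighted bound $t|v|_{D^3}^2\le c_2$ together with $\|\rho\|\le Cc_1^2\le Cc_2^2$ yields $t|(\rho_t,P_t)|_{D^{1,q}}^2\le Cc_2^3$.

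The computation is routine once Lemma \ref{lem:=1} is available, since $\rho$ and $P$ obey transport equations of the same type as $H$. The one structural difference is the inhomogeneous source $-\overline{\rho}\,\text{div}\,v$ (resp. $-\gamma\overline{P}\,\text{div}\,v$) produced by subtracting the far-field constant; it enters at the same order as the initial data, so the main point to verify is that it is integrable in the top-order $D^{2,q}$ norm over the short interval $[0,T_1]$. Note that the quadratic power $c_1^2$ in the statement is produced not by the spatial estimate (which is $O(c_1)$) but by the time derivatives, e.g. $\rho_t$ is a product of a $v$-factor of size $\sim c_1$ with a $\rho$-factor of size $\sim c_1$. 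Accordingly the only genuinely delicate part is the bookkeeping of the powers of $c_1,c_2$ and of the time weights in the $\rho_{tt}$ and weighted $D^{1,q}$ estimates, where the correct $T_1=\min(T^*,C(1+c_1)^{-1})$ dependence must be used to absorb the $c_1$-growth of the time integrals of $v$.
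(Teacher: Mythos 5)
Your proposal is correct and follows essentially the same route as the paper: the spatial bound is the standard transport estimate (the paper cites the energy estimate of Cho--Kim for exactly the inequality (\ref{kaka4}) you derive via the commutator lemma and Gronwall), and the bounds on $\rho_t$, $\rho_{tt}$ and the weighted $D^{1,q}$ norm are read off the continuity equation just as the paper does, with $P$ handled identically through (\ref{pr}). Your explicit bookkeeping of the powers of $c_1,c_2$ and of the role of $T_1\le (1+c_1)^{-1}$ simply fills in details the paper leaves implicit.
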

\begin{proof}
From $ (\ref{eq:1.2rfv})_3$ and  the standard energy estimate shown in \cite{CK}, for $2\leq r\leq q$, we have
\begin{equation}\label{kaka4}\begin{split}
\|\rho(t)-\overline{\rho}\|_{W^{2,r}}\leq \Big(\|\rho_0-\overline{\rho}\|_{W^{2,r}}+\overline{\rho}\int_0^t \|\nabla v(s)\|_{W^{2,r}}\text{d}s\Big) \exp\Big(C\int_0^t \|\nabla v(s)\|_{H^2\cap W^{2,q}}\text{d}s\Big).
\end{split}
\end{equation}
Then from  (\ref{keji}), the desired estimate for $\|\rho(t)\|_{H^2 \cap W^{2,q}}$ can be easily obtained via (\ref{kaka4}):
\begin{equation}\label{da90}\begin{split}
\|\rho(t)-\overline{\rho}\|_{H^2\cap W^{2,q}}\leq Cc_0,\ \text{for}\ 0\leq t \leq T_1=\min(T^*,(1+c_1)^{-1}).
\end{split}
\end{equation}

Secondly, the estimates for $(\rho_t,\rho_{tt})$ follows  immediately from the continuity equation
\begin{equation}\label{da99}
\rho_t=-\rho \text{div}v-v\cdot \nabla \rho.
\end{equation}


Finally, due to   pressure $P$ satisfies (\ref{pr}),
then the corresponding estimates for $P$ can be obtained via the same method as $\rho$.

\end{proof}


Now we give the  estimates for the lower order terms of the velocity $u$.

 \begin{lemma}[\textbf{Lower order estimate of the velocity $u$}]\label{lem:4-1}\ \\ 
\begin{equation*}
\begin{split}
|u(t)|^2_{ D^1_0\cap D^2}+|\sqrt{\rho}u_t(t)|^2_{2}+\int_{0}^{t}\big(|u|^2_{D^3}+|u_t|^2_{D^1_0}\big)\text{d}s\leq Cc^{12}_1
\end{split}
\end{equation*}
for $0 \leq t \leq T_2=\min(T^*,C(1+c_1)^{-8})$.
 \end{lemma}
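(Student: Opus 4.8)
The plan is to run a two-tier energy estimate on the momentum equation (\ref{rst1}), supplemented by the elliptic regularity of the Lam\'e operator from Lemma \ref{zhenok}, and to close everything by Gronwall's inequality together with the smallness of $T_2$. The essential feature is that no positive lower bound on $\rho$ is used: only $|\rho|_\infty\le Cc_1$ from Lemma \ref{lem:2} and the bounds on $H,\rho,P$ and their time derivatives from Lemmas \ref{lem:=1}--\ref{lem:2} enter, so that the resulting constant is uniform in $\delta$.

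\emph{First tier.} I would multiply (\ref{rst1}) by $u_t$ and integrate over $\Omega$. Since $u_t|_{\partial\Omega}=0$, the viscous term gives $\int Lu\cdot u_t=\frac12\frac{\text{d}}{\text{d}t}\big(\mu|\nabla u|_2^2+(\lambda+\mu)|\text{div}u|_2^2\big)$, while $\int\rho|u_t|^2$ is the good dissipative term. The pressure and magnetic forcing are transferred onto time derivatives, e.g. $-\int\nabla P\cdot u_t=\frac{\text{d}}{\text{d}t}\int(P-\overline{P})\text{div}u-\int P_t\,\text{div}u$, and similarly for $\mu_0\text{rot}H\times H$ via (\ref{zhoumou}); the convective term is paired as $|\sqrt{\rho}\,v\cdot\nabla v|_2\,|\sqrt{\rho}u_t|_2$ and absorbed into $\epsilon\int\rho|u_t|^2$. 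This yields control of $|\nabla u(t)|_2^2$ and $\int_0^t|\sqrt{\rho}u_t|_2^2\,\text{d}s$, using $|P_t|_2,|H_t|_2\le Cc_1^2$ from Lemmas \ref{lem:=1}--\ref{lem:2}.

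\emph{Second tier.} To reach the pointwise quantity $|\sqrt{\rho}u_t(t)|_2^2$ and the dissipation $\int_0^t|u_t|_{D^1_0}^2\,\text{d}s$, I would differentiate (\ref{rst1}) in $t$,
$$\rho u_{tt}+\rho_t u_t+(\rho v\cdot\nabla v)_t+\nabla P_t+Lu_t=\mu_0(\text{rot}H\times H)_t,$$
multiply by $u_t$ and integrate. The first two terms combine to $\frac12\frac{\text{d}}{\text{d}t}\int\rho|u_t|^2+\frac12\int\rho_t|u_t|^2$, while $\int Lu_t\cdot u_t\ge\mu|\nabla u_t|_2^2$ produces the dissipation. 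Writing $\rho_t=-\text{div}(\rho v)$ and integrating by parts turns $\int\rho_t|u_t|^2$ into $\int\rho v\cdot\nabla|u_t|^2$, bounded by $\epsilon|\nabla u_t|_2^2+C|\sqrt{\rho}u_t|_2^2$ after using $|v|_\infty,|\rho|_\infty\le Cc_1$; the terms $(\rho v\cdot\nabla v)_t$, $\nabla P_t$ and $(\text{rot}H\times H)_t$ are handled by H\"older with $|u_t|_6\le C|\nabla u_t|_2$ and the Lemma \ref{lem:=1}--\ref{lem:2} bounds on $\rho_t,P_t,H_t$, again absorbing $\epsilon|\nabla u_t|_2^2$. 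The decisive input is the initial value: at $t=0$, combining (\ref{rst1}) (with $v(0,x)=u_0$) and the compatibility condition (\ref{th79}) gives $\rho_0u_t(0)=-\rho_0\,u_0\cdot\nabla u_0-\sqrt{\rho_0}\,g_1$, hence $|\sqrt{\rho}u_t(0)|_2\le Cc_0$. Gronwall then controls $|\sqrt{\rho}u_t(t)|_2^2+\int_0^t|u_t|_{D^1_0}^2\,\text{d}s$.

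\emph{Closing the $D^2$ and $D^3$ norms.} Viewing (\ref{rst1}) as $Lu=-\rho u_t-\rho v\cdot\nabla v-\nabla P+\mu_0\text{rot}H\times H=:G$, Lemma \ref{zhenok} gives $|u|_{D^2}\le C(|G|_2+|u|_{D^1_0})$ and $|u|_{D^3}\le C(|\nabla G|_2+|u|_{D^1_0})$. Since $|G|_2\lesssim|\rho|_\infty^{1/2}|\sqrt{\rho}u_t|_2+\cdots$ and $|\nabla G|_2$ contains $\rho\nabla u_t$ contributing $|\nabla u_t|_2$, the two tiers yield $|u(t)|_{D^2}^2$ pointwise and $\int_0^t|u|_{D^3}^2\,\text{d}s\lesssim\int_0^t|u_t|_{D^1_0}^2\,\text{d}s+\cdots$, all bounded by $c_1$. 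The main obstacle is the second tier: one must extract $\frac{\text{d}}{\text{d}t}|\sqrt{\rho}u_t|_2^2$ from $\int\rho u_{tt}\cdot u_t$ and pin down $|\sqrt{\rho}u_t(0)|_2$ through the compatibility condition, while absorbing every degenerate forcing term into $\epsilon|\nabla u_t|_2^2$ plus a Gronwall-integrable remainder, entirely without a lower bound on $\rho$. The precise exponent $c_1^{12}$ and the time threshold $C(1+c_1)^{-8}$ then emerge from routine bookkeeping of these absorptions.
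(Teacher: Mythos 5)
Your proposal is correct and follows essentially the same route as the paper: a first energy estimate testing the momentum equation with $u_t$ (moving $\nabla P$ and $\mu_0\,\text{rot}H\times H$ onto time derivatives), a second estimate testing the time-differentiated equation with $u_t$ and recovering $|\sqrt{\rho}u_t|_2$ at $t=0$ from the compatibility condition (\ref{th79}), and finally the Lam\'e elliptic estimates of Lemma \ref{zhenok} to close the $D^2$ and $D^3$ norms. The only cosmetic difference is that the paper evaluates the initial datum for $\sqrt{\rho}u_t$ by integrating from $\tau>0$ and letting $\tau\to 0$ rather than setting $t=0$ directly, which is the careful way to phrase the same step when vacuum is present.
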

 \begin{proof}
\underline{Step 1}: Multiplying  $ (\ref{eq:1.2rfv})_4$ by $u_t$ and integrating  over $\Omega$, we have
\begin{equation}\label{f1}
\begin{split}
&\int_{\Omega}\rho |u_t|^2 \text{d}x+\frac{1}{2}\frac{d}{dt}\int_{\Omega}\Big(\mu|\nabla u|^2+\big(\lambda+\mu\big)(\text{div}u)^2\Big) \text{d}x\\
=& \int_{\Omega} \Big(-\nabla P-\rho v\cdot \nabla v+(\text{rot}H\times H)\Big)\cdot u_t\text{d}x
=\frac{d}{dt}\Lambda_1(t)- \Lambda_2(t),
\end{split}
\end{equation}
where
\begin{equation*}
\begin{split}
\Lambda_1(t)=&\int_{\Omega} \Big(( P-\overline{P})\text{div}u+(\text{rot}H\times H)\cdot u\Big)\text{d}x,\\
\Lambda_2(t)=&\int_{\Omega} \Big(P_t\text{div}u+\rho (v\cdot \nabla v)\cdot u_t +(\text{rot}H\times H)_t\cdot u\Big)\text{d}x.
\end{split}
\end{equation*}
According to Lemmas \ref{lem:=1}-\ref{lem:2}, Holder's inequality, Gagliardo-Nirenberg inequality and Young's inequality, we easily deduce that
\begin{equation*}
\begin{split}
\Lambda_1(t)\leq& C\big(|\nabla u|_2 |P-\overline{P}|_2+|\nabla H|_2 |H|_{3} |\nabla u|_{2}\big)
\leq  \frac{\mu}{10}|\nabla u|^2_2+Cc^{8}_1,\\
\Lambda_2(t)\leq&  C\big(|\nabla u|_2 |P_t|_2+|\rho|^{\frac{1}{2}}_{\infty} |\sqrt{\rho}u_t|_2  |v|_{\infty}  |\nabla v|_2+\|H\|_{2} \|H_t\|_1 |\nabla u|_{2}\big)\\
\leq&  C|\nabla u|^2_2+\frac{1}{10}|\sqrt{\rho} u_t|^2+Cc^{8}_1
\end{split}
\end{equation*}
for $0< t\leq T_1$.
Then integrating (\ref{f1}) over $(0,t)$ with respect to $t$, we have
\begin{equation*}
\begin{split}
\int_{0}^{t}|\sqrt{\rho} u_t(s)|^2_2\text{d}s+|\nabla u(t)|^2_2\leq C\int_{0}^{t}|\nabla u(s)|^2_2\text{d}s+Cc^{8}_1
\end{split}
\end{equation*}
for $0 \leq t \leq T_1$,
via Gronwall's inequality, we have
\begin{equation}\label{plh}
\begin{split}
&\int_{0}^{t}|\sqrt{\rho} u_t(s)|^2_2\text{d}s+|\nabla u(t)|^2_2
\leq Cc^{8}_1\exp\big(Ct\big)\leq Cc^{8}_1, \quad 0\leq t \leq T_1.
\end{split}
\end{equation}

Combining Lemmas \ref{lem:=1}-\ref{lem:2} and Lemma \ref{ok}, we easily have
\begin{equation}\label{plmn}
\begin{split}
\int_{0}^{t} |u|^2_{D^2}\text{d}s\leq&C \int_{0}^{t}\Big(|\rho u_t+\rho v\cdot \nabla v|^2_2+|\nabla P|^2_2+|\text{rot}H\times H|^2_2+|u|^2_{D^1_0} \Big)\text{d}s\leq Cc^{10}_1.
\end{split}
\end{equation}

%
%
%
%
%
%
%
%
%
%
\underline{Step 2}: Differentiating $ (\ref{eq:1.2rfv})_4$ with respect to $t$, we have
\begin{equation}\label{gh78}
\begin{split}
\rho u_{tt}+Lu_t= -\nabla P_t-\rho_tu_t-(\rho v\cdot\nabla v)_t
 +(\text{rot}H\times H)_t.
\end{split}
\end{equation}
Multiplying (\ref{gh78}) by $u_t$ and integrating (\ref{gh78}) over $\Omega$, we have
\begin{equation}\label{zhen4}
\begin{split}
&\frac{1}{2}\frac{d}{dt}\int_{\Omega}\rho |u_t|^2 \text{d}x+\int_{\mathbb{R}^3}(\mu|\nabla u_t|^2+(\lambda+\mu)(\text{div}u_t)^2) \text{d}x\\
=&  \int_{\Omega} \big(-\nabla P_t-( \rho  v \cdot \nabla v)_t-\frac{1}{2}\rho_t u_t+ (\text{rot}H\times H)_t\big)\cdot u_t\text{d}x
\equiv:\sum_{i=1}^{4}I_i.
\end{split}
\end{equation}

According to Lemmas \ref{lem:=1}-\ref{lem:2}, Holder's inequality, Gagliardo-Nirenberg inequality and Young's inequality,  we deduce that
\begin{equation}\label{zhou6}
\begin{split}
I_1=&\int_{\Omega} P_t\text{div}u_t \text{d}x\leq C|P_t|_{2} |\nabla u_t|_{2}\leq \frac{\mu}{10}|\nabla u_t|^2_2+Cc^4_1,\\
I_2\leq& C|\rho|^{\frac{1}{2}}_{\infty} |\nabla v_t|_2 |\nabla v|_{3} |\sqrt{\rho}u_t|_{2}+|\rho|^{\frac{1}{2}}_{\infty} |v|_{\infty} |\nabla v_t|_{2} |\sqrt{\rho}u_t|_{2}+C|\rho_t|_{3} |v|_{\infty} |\nabla v|_{2} |  u_t|_{6}\\
\leq &C |\sqrt{\rho}u_t|^2_{2}+\frac{\mu}{10}|\nabla u_t|^2_2+Cc^4_1(1+|\nabla v_t|^2_2),\\
I_3=&-\frac{1}{2}\int_{\Omega} \rho_t |u_t|^2  \text{d}x=\int_{\Omega} \rho v  u_t \cdot \nabla u_t  \text{d}x\leq  C|\rho|^{\frac{1}{2}}_{\infty}|v|_{D^1_0}|\sqrt{\rho} u_t|_{3}|\nabla u_t|_{2}\\
\leq& Cc^8_1|\sqrt{\rho} u_t|^2_{2}+\frac{\mu}{10}|\nabla u_t|^2_2,\\
I_4=&
\int_{\Omega} \text{div}\Big(H\otimes H-\frac{1}{2}|H|^2I_3\Big)_t\cdot u_t\text{d}x
=-\int_{\Omega} (H\otimes H-\frac{1}{2}|H|^2I_3)_t:\nabla u_t\text{d}x \\
\leq& C|\nabla u_t|_2 |H_t|_2 |H|_{\infty}\leq Cc^8_1+\frac{\mu}{10}|\nabla u_t|^2_2.
\end{split}
\end{equation}
Then combining the above estimate (\ref{zhou6})  and  (\ref{zhen4}), we have
\begin{equation}\label{zhen5g}
\begin{split}
&\frac{1}{2}\frac{d}{dt}\int_{\Omega}\rho |u_t|^2 \text{d}x+\int_{\Omega}|\nabla u_t|^2\text{d}x
\leq Cc^{8}_1|\sqrt{\rho}u_t|^2_{2}+Cc^4_1|\nabla v_t|^2_{2}+Cc^{8}_1.
\end{split}
\end{equation}
Integrating (\ref{zhen5g}) over $(\tau,t)$ ($\tau\in (0,t)$), for $\tau\leq t \leq T_1$, we  have
\begin{equation}\label{nvk3}
\begin{split}
&|\sqrt{\rho}u_t(t)|^2_2 +\int_{\tau}^{t}|\nabla u_t|^2_{2}\text{d}s
\leq |\sqrt{\rho}u_t(\tau)|^2_2+Cc^{8}_1\int_{\tau}^{t} |\sqrt{\rho}u_t|^2_2 \text{d}s+Cc^{8}_1.
\end{split}
\end{equation}
From the momentum equations  $(\ref{eq:1.2rfv})_4$, we easily have
\begin{equation}\label{li9}
\begin{split}
|\sqrt{\rho}u_t(\tau)|^2_2\leq C\int_{\Omega} \rho |v|^2|\nabla v|^2\text{d}x+C\int_{\Omega} \frac{|\nabla P+Lu- \text{rot}H\times H|^2}{\rho}\text{d}x,
\end{split}
\end{equation}
due to the initial layer compatibility condition (\ref{th79}), letting $\tau\rightarrow 0$ in (\ref{li9}), we have
\begin{equation}\label{nvk33}
\begin{split}
\lim \sup_{\tau\rightarrow 0}|\sqrt{\rho}u_t(\tau)|^2_2 \leq C\int_{\Omega} \rho_0 |u_0|^2|\nabla u_0|^2\text{d}x+C\int_{\Omega} |g_1|^2\text{d}x\leq Cc^4_0.
\end{split}
\end{equation}
Then, letting $\tau\rightarrow 0$ in (\ref{nvk3}), we have
\begin{equation}\label{nvk44}
\begin{split}
&|\sqrt{\rho}u_t(t)|^2_2 +\int_{0}^{t}|\nabla u_t|^2_{2}\text{d}s \leq Cc^{8}_1+Cc^{8}_1\int_{0}^{t} |\sqrt{\rho}u_t|^2_2 \text{d}s.
\end{split}
\end{equation}
From Gronwall's inequality, we deduce that
\begin{equation}\label{nvk55}
\begin{split}
|\sqrt{\rho}u_t(t)|^2_2 +\int_{0}^{t}|\nabla u_t|^2_{2}\text{d}s\leq Cc^{8}_1\exp(Cc^{8}_1t)\leq Cc^{8}_1,\ 0 \leq t \leq T_2.
\end{split}
\end{equation}

Finally, due to 
Lemmas \ref{lem:=1}-\ref{lem:2} and  Lemma \ref{ok}, for $0 \leq t \leq T_2$, we easily have
\begin{equation*}
\begin{split}
&\ \quad| u(t)|_{D^2}\leq \big(|\rho u_t(t)+\rho v\cdot \nabla v(t)|_2 +|\nabla P(t)|_2+|\text{rot}H\times H(t)|_2+|u(t)|_{D^1_0}\big)\leq Cc^{5}_1,\\
&\int_{0}^{t}| u|^2_{D^3}\text{d}s\leq C\int_{0}^{t}\Big(|\rho u_t+\rho v\cdot \nabla v|^2_{D^1} +|\nabla P|^2_{D^1}+|\text{rot}H\times H|^2_{D^1}+|u|^2_{D^1_0}\Big)\text{d}s
\leq Cc^{12}_1.
\end{split}
\end{equation*}
\end{proof}
Now we will give some estimates for the higher order terms of the velocity $u$ in the following three Lemmas.
\begin{lemma}[\textbf{Higher order estimate of the velocity $u$}]\label{lem:4-3}\ \\
\begin{equation*}
\begin{split}
t|u_t(t)|^2_{D^1_0}+t|u(t)|^2_{D^3}+\int_{0}^{t}s\big(|u_t|^2_{D^{2}}+|\sqrt{\rho} u_{tt}|^2_{2}\big)\text{d}s\leq Cc^{24}_2, \quad 0\leq t \leq T_2.
\end{split}
\end{equation*}
 \end{lemma}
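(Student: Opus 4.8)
The plan is to obtain higher-order estimates for $u$ by differentiating the momentum equation in time and testing against $u_{tt}$, thereby upgrading the $\sqrt{\rho}u_t$ control from Lemma \ref{lem:4-1} to weighted control of $|u_t|_{D^1_0}$ and $|\sqrt{\rho}u_{tt}|_2$. First I would differentiate $(\ref{eq:1.2rfv})_4$ once in $t$ to get equation (\ref{gh78}), then multiply by $u_{tt}$ and integrate over $\Omega$. This produces on the left-hand side the term $\int_\Omega \rho|u_{tt}|^2\,\text{d}x$ together with $\frac{1}{2}\frac{d}{dt}\int_\Omega\big(\mu|\nabla u_t|^2+(\lambda+\mu)(\text{div}\,u_t)^2\big)\,\text{d}x$, so that the time-derivative of $|u_t|^2_{D^1_0}$ appears naturally with a good sign from the ellipticity in (\ref{eq:1.5}). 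The right-hand side will consist of the four forcing terms $-\nabla P_t$, $-\rho_t u_t$, $-(\rho v\cdot\nabla v)_t$, and $(\text{rot}H\times H)_t$, each paired with $u_{tt}$.

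Next I would estimate each right-hand term, absorbing the factor $|\sqrt{\rho}u_{tt}|_2$ by Young's inequality into the good $\int_\Omega\rho|u_{tt}|^2$ term, while bounding the remaining factors using Lemmas \ref{lem:=1} and \ref{lem:2} for $H$, $\rho$, $P$ and their time derivatives, together with the lower-order bound (\ref{nvk55}) for $|\sqrt{\rho}u_t|_2$ and $\int|\nabla u_t|^2$. The pressure term contributes $|\nabla P_t|_2|\sqrt{\rho}u_{tt}|_2$ after one $\rho^{-1/2}$ weight; the convective term $(\rho v\cdot\nabla v)_t$ splits into pieces involving $v_t$ and $v_{tt}$, where the assumptions (\ref{jizhu}) on $v$ (in particular $t^2|v_t|^2_{D^2}$ and $\int t^2|v_{tt}|^2_{D^1_0}$) supply the needed bounds after multiplying through by the weight $t$; the magnetic term differentiates to $(H\otimes H-\frac{1}{2}|H|^2I_3)_t$ contracted with $\nabla u_t$ or paired with $u_{tt}$, controlled via Lemma \ref{lem:=1}. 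After collecting, I expect an inequality of the schematic form
\begin{equation*}
\frac{d}{dt}|u_t|^2_{D^1_0}+|\sqrt{\rho}u_{tt}|^2_2\leq C c^{A}_2\big(|u_t|^2_{D^1_0}+|\nabla u_t|^2_2\big)+Cc^{B}_2\big(1+|v_{tt}|^2_{D^1_0}+\cdots\big),
\end{equation*}
where the forcing in the last group is integrable in time against the weight.

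Because the estimate I want is weighted by $t$ (to absorb the vacuum-induced loss of $|u_t(0)|_{D^1_0}$ control, which need not be finite), I would multiply the differential inequality by $t$, rewrite $t\frac{d}{dt}|u_t|^2_{D^1_0}=\frac{d}{dt}\big(t|u_t|^2_{D^1_0}\big)-|u_t|^2_{D^1_0}$, and integrate in time; the stray $|u_t|^2_{D^1_0}$ is then handled by the already-integrated bound $\int_0^t|\nabla u_t|^2\,\text{d}s\leq Cc^8_1$ from (\ref{nvk55}). Invoking Lemma \ref{bei} lets me choose a sequence $s_k\to 0$ along which $s_k|\sqrt{\rho}u_{tt}(s_k)|^2_2\to 0$ (or the corresponding boundary term vanishes), which justifies dropping the lower endpoint contribution when passing to the limit, exactly as the $\tau\to 0$ argument was used in Lemma \ref{lem:4-1}. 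A Gronwall argument on $[0,T_2]$ with $T_2=\min(T^*,C(1+c_1)^{-8})$ then yields the bound $t|u_t|^2_{D^1_0}+\int_0^t s|\sqrt{\rho}u_{tt}|^2_2\,\text{d}s\leq Cc^{24}_2$. Finally, to recover $t|u(t)|^2_{D^3}$ and $\int_0^t s|u_t|^2_{D^2}\,\text{d}s$, I would apply the elliptic regularity Lemma \ref{zhenok} to $(\ref{eq:1.2rfv})_4$ and to its time-differentiated form (\ref{gh78}): controlling $|u|_{D^3}$ by $|\rho u_t+\rho v\cdot\nabla v|_{D^1}+|\nabla P|_{D^1}+|\text{rot}H\times H|_{D^1}+|u|_{D^1_0}$ and $|u_t|_{D^2}$ by $|\rho u_{tt}+\cdots|_2+|u_t|_{D^1_0}$, then inserting the estimates just obtained and integrating against the weight.

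The main obstacle I anticipate is the bookkeeping of the convective term $(\rho v\cdot\nabla v)_t$ against $u_{tt}$: its most dangerous piece involves $\rho v\cdot\nabla v_t$, and after the $\rho^{-1/2}$ split the factor $|\nabla v_t|$ must be controlled in a norm that is square-integrable against the weight $t$. Matching this to the precise hypotheses (\ref{jizhu}) on $v$, and ensuring that the resulting powers of $c_2$ close at the claimed exponent $24$ rather than blowing up, is the delicate point; everything else is a systematic application of Hölder, Gagliardo-Nirenberg (Lemma \ref{gag}), Young, the elliptic estimate (Lemma \ref{zhenok}), and Gronwall, combined with the vanishing-sequence device of Lemma \ref{bei} to handle the vacuum at $t=0$.
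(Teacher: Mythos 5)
Your overall skeleton (differentiate $(\ref{eq:1.2rfv})_4$ in time, test with $u_{tt}$, weight by $t$, remove the $\tau\to 0$ boundary term with Lemma \ref{bei}, close with Gronwall and the elliptic estimate of Lemma \ref{zhenok}) is the same as the paper's, but there is a genuine gap at the central step. You propose to bound each right-hand term of (\ref{gh78}) paired with $u_{tt}$ by absorbing $|\sqrt{\rho}u_{tt}|_2$ via Young's inequality. That works only for the one term carrying a factor of $\rho$, namely $-\rho(v\cdot\nabla v)_t\cdot u_{tt}$. The terms $\int_\Omega P_t\,\text{div}u_{tt}\,\text{d}x$, $\int_\Omega(\text{rot}H\times H)_t\cdot u_{tt}\,\text{d}x$, and the part of $-\int_\Omega\rho_t u_t\cdot u_{tt}\,\text{d}x$ coming from $\rho_t=-\rho\,\text{div}v-v\cdot\nabla\rho$ that lacks the $\rho$ factor carry no density weight; since vacuum is allowed, $\rho^{-1/2}$ is unbounded, so your claim that the pressure term ``contributes $|\nabla P_t|_2|\sqrt{\rho}u_{tt}|_2$ after one $\rho^{-1/2}$ weight'' fails: $|\rho^{-1/2}\nabla P_t|_2$ is not controlled. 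Nor can these terms be absorbed into a $|\nabla u_{tt}|^2_2$ term, because testing with $u_{tt}$ (rather than $u_{ttt}$) produces no such good term on the left-hand side.

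The missing idea is an integration by parts in time: the right-hand side of (\ref{f3}) must be rewritten as $\frac{d}{dt}\Lambda_3(t)+\Lambda_4(t)$ with $\Lambda_3=\int_\Omega\big(P_t\text{div}u_t-\rho_t(v\cdot\nabla v)\cdot u_t-\frac{1}{2}\rho_t|u_t|^2+(\text{rot}H\times H)_t\cdot u_t\big)\text{d}x$, so that every occurrence of $u_{tt}$ without a $\rho$ weight is traded for second time derivatives $P_{tt},\rho_{tt},H_{tt}$ (controlled in $L^2_tL^2_x$ by Lemmas \ref{lem:=1}--\ref{lem:2}) acting only on $u_t$. One then checks $\Lambda_3\le\frac{\mu}{10}|\nabla u_t|^2_2+Cc^{20}_1$, so the modified energy $\Lambda^*=\frac{1}{2}\int_\Omega\big(\mu|\nabla u_t|^2+(\lambda+\mu)(\text{div}u_t)^2\big)\text{d}x-\Lambda_3$ remains comparable to $|\nabla u_t|^2_2$ up to an additive constant, and the rest of your argument goes through as in the paper. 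A minor correction: the vanishing boundary term here is $s_k|\nabla u_t(s_k)|^2_2\to 0$, obtained by applying Lemma \ref{bei} to $\nabla u_t\in L^2([0,T_2];L^2)$ from Lemma \ref{lem:4-1}; the sequence with $s_k|\sqrt{\rho}u_{tt}(s_k)|^2_2\to 0$ is what is needed for the subsequent $t^2$-weighted estimate, not this one.
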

\begin{proof}
Multiplying  (\ref{gh78}) by $u_{tt}$ and integrating  over $\Omega$, we have
\begin{equation}\label{f3}
\begin{split}
&\int_{\Omega}\rho |u_{tt}|^2 \text{d}x+\frac{1}{2}\frac{d}{dt}\int_{\Omega}\Big(\mu|\nabla u_t|^2+(\lambda+\mu)(\text{div}u_t)^2\Big) \text{d}x\\
=& \int_{\Omega} \big(-\nabla P_t-(\rho v\cdot \nabla v)_t-\rho_tu_t+(\text{rot}H\times H)_t\big)\cdot u_{tt}\text{d}x
=\frac{d}{dt}\Lambda_3(t)+ \Lambda_4(t),
\end{split}
\end{equation}
where
\begin{equation*}
\begin{split}
\Lambda_3(t)=&\int_{\Omega} \big( P_t\text{div}u_t-\rho_t(v\cdot \nabla v)\cdot u_t-\frac{1}{2}\rho_t|u_t|^2+(\text{rot}H\times H)_t \cdot u_t \big)\text{d}x,\\
\Lambda_4(t)=&\int_{\Omega} \big(-P_{tt}\text{div}u_t-\rho (v\cdot \nabla v)_t \cdot u_{tt}+\rho_{tt} (v\cdot \nabla v) \cdot u_t
 +\rho_{t} (v\cdot \nabla v)_t \cdot u_t\big)\text{d}x\\
 &+\int_{\Omega}\big( \frac{1}{2}\rho_{tt}|u_t|^2- (\text{rot}H\times H)_{tt}\cdot u_t\big)\text{d}x\equiv:\sum_{i=5}^{10}I_i.
\end{split}
\end{equation*}
Then  almost same to (\ref{zhou6}), we also have
\begin{equation}\label{zhen6}\begin{split}
\Lambda_3(t)\leq \frac{\mu}{10} |\nabla u_t|^2_2+Cc^8_1|\sqrt{\rho}u_t|^2_2+Cc^8_1\leq \frac{\mu}{10} |\nabla u_t|^2_2+Cc^{20}_1, \quad 0\leq t \leq T_2.
\end{split}
\end{equation}
Let we denote
$$
\Lambda^*(t)=\frac{1}{2}\int_{\Omega}\mu|\nabla u_t|^2+(\lambda+\mu)(\text{div}u_t)^2\text{d}x-\Lambda_3(t),
$$
then from  (\ref{zhen6}), for $0 \leq t \leq T_2$, we quickly  have
\begin{equation}\label{kui}
\begin{split}
C|\nabla u_t|^2_{2}-Cc^{20}_1 \leq \Lambda^*(t)\leq&  C|\nabla u_t|^2_{2}+Cc^{20}_1.
\end{split}
\end{equation}

Similarly,  from Holder's inequality and  Gagliardo-Nirenberg inequality, for $0 < t \leq T_2$,  we deduce that
\begin{equation}\label{ght}
\begin{split}
&I_{5}\leq C|P_{tt}|_{2}  | \nabla u_t|_{2},\ I_{6} \leq |\rho|^{\frac{1}{2}}_{\infty} |\sqrt{\rho}u_{tt}|_2\big(|v|_{\infty} |\nabla v_t|_2+|\nabla v|_3 |\nabla v_t|_2\big),\\
&I_{7}\leq C|\rho_{tt}|_2 |\nabla u_t|_2 |\nabla v|_3 |v|_{\infty}, \\
& I_{8}\leq C|\rho_t|_2 |v_{t}|_6 |\nabla v|_{6} |\nabla u_t|_2+C|v|_{\infty}|v_t|_{6}|\nabla u_t|_{2}|\rho_t|_3,\\
&I_{9}\leq C|\rho_{t}|_3 |\nabla u_t|_2|v|_{\infty}|u_t|_{6}+C|\rho|^{\frac{1}{2}}_{\infty} |\sqrt{\rho}u_t|_{3} | v_t|_{6} |\nabla u_t|_2,
\end{split}
\end{equation}
where we have used the facts $\rho_t=-\text{div}(\rho v)$,
and
\begin{equation}\label{zhen7}
\begin{split}
 I_{10}=&-\int_{\Omega}(\text{rot}H\times H)_{tt}\cdot u_{t}\text{d}x
=\int_{\Omega}\big(H\otimes H-\frac{1}{2}|H|^2I_3\big)_{tt}: \nabla u_{t}\text{d}x\\
\leq& C|\nabla u_t|_2|H_t|^2_4+C|\nabla u_t|_2|H_{tt}|_2| H|_{\infty}.
\end{split}
\end{equation}
Combining (\ref{ght})-(\ref{zhen7})  and  Lemmas \ref{lem:=1}-\ref{lem:4-1}, from Young's inequality, we have
\begin{equation}\label{zhen10}\begin{split}
\Lambda_4(t) \leq&\frac{1}{2}|\sqrt{\rho} u_{tt}(t)|^2_{2}+Cc^{8}_1(1+|v_t|^2_{D^1_0})|\nabla u_t|^2_{2}\\
&+Cc^4_1(1+|P_{tt}|^2_2+|\rho_{tt}|^2_2+|H_{tt}|^2_2)+Cc^{18}_1|v_t|^2_{D^1_0}.
\end{split}
\end{equation}
Then multiplying (\ref{f3}) with $t$ and  integrating   over $(\tau,t)$ ($\tau\in (0,t)$), from (\ref{kui}) and (\ref{zhen10}),
we have
\begin{equation}
\label{e2}
\begin{split}
&\int_{\tau}^{t}s|\sqrt{\rho}u_{tt}(s)|^2_2\text{d}s+t|\nabla u_t(t)|^2_{2}\\
\leq& \tau| u_t(\tau)|^2_{D^1_0}
+Cc^8_1\int_{\tau}^{t}s(1+|\nabla v_t|^2_2)|\nabla u_t|^2_{2}\text{d}s+Cc^{20}_2
\end{split}
\end{equation}
for $\tau\leq t \leq T_2$.  From Lemma \ref{lem:4-1}, we have $\nabla u_{t}\in L^2([0,T_2];L^2)$, then  according to Lemma \ref{bei},  there exists a sequence ${s_k}$ such that
$$
s_k\rightarrow 0,\quad \text{and} \quad s_k|\nabla u_{t}(s_k)|^2_2\rightarrow 0, \quad \text{as} \quad k\rightarrow \infty.
$$
Therefore, letting $\tau=s_k\rightarrow 0$ in (\ref{e2}), we conclude that
\begin{equation}
\label{ew2}
\begin{split}
&\int_{0}^{t}s|\sqrt{\rho}u_{tt}(s)|^2_2\text{d}s+t|\nabla u_t(t)|^2_{2}
\leq Cc^{8}_1\int_{0}^{t}s(1+|\nabla v_t|^2_2)|\nabla u_t|^2_{2}\text{d}s+Cc^{20}_2.
\end{split}
\end{equation}
Then from Gronwall's inequality, we have
\begin{equation*}
\begin{split}
&\int_{0}^{t}s|\sqrt{\rho}u_{tt}(s)|^2_2\text{d}s+t| u_t(t)|^2_{D^1_0}
\leq Cc^{20}_2\exp\Big(Cc^{8}_1\int_{0}^{t}s(1+|\nabla v_t|^2_2)\text{d}s\Big)\leq Cc^{20}_2.
\end{split}
\end{equation*}
Finally, from Lemma \ref{ok},
 for $0 \leq t \leq T_2$,  we immediately have
\begin{equation*}\begin{split}
t|u(t)|^2_{D^3} \leq t\big(|\rho u_t+\rho v\cdot \nabla v|^2_{D^1} +|\nabla P|^2_{D^1}+|\text{rot}H\times H|^2_{D^1}+|u|^2_{D^1_0}\big) \leq Cc^{24}_2,
\end{split}
\end{equation*}
and similarly,  
\begin{equation*}\begin{split}
&\int_{0}^{t}s|u_{t}|^2_{D^2}\text{d}s
\leq C\int_{0}^{t} s\big(|(\rho u_t+\rho v\cdot \nabla v)_t|^2_{2} +|\nabla P_t|^2_{2}+|(\text{rot}H\times H)_t|^2_{2}+|u_t|^2_{D^1_0}\big)\text{d}s \leq Cc^{22}_2.
\end{split}
\end{equation*}
\end{proof}

\begin{lemma}[\textbf{Higher order estimate of the velocity $u$}]\label{lem:4-4*}
\begin{equation*}
\begin{split}
\int_{0}^{t}|u(s)|^{p_0}_{D^{3,q}}\text{d}s\leq Cc^{54}_2\quad  \text{for}\quad  0\leq t \leq T_2.
\end{split}
\end{equation*}
 \end{lemma}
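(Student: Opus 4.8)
The goal is to bound $\int_0^t |u|_{D^{3,q}}^{p_0}\,\dif s$ by $Cc_2^{54}$, where $p_0\le \frac{4q}{5q-6}$. My plan is to use the elliptic regularity for the Lam\'e operator (Lemma \ref{zhenok}) applied to the momentum equation $(\ref{eq:1.2rfv})_4$ rewritten as $Lu=-\rho u_t-\rho v\cdot\nabla v-\nabla P+\mu_0\,\text{rot}H\times H$. Taking $l=q$ and $k=1$ gives
$$
|u|_{D^{3,q}}\le C\big(|\rho u_t+\rho v\cdot\nabla v|_{D^{1,q}}+|\nabla P|_{D^{1,q}}+|\text{rot}H\times H|_{D^{1,q}}+|u|_{D^1_0}\big).
$$
So the task reduces to controlling the $D^{1,q}$ norms of each right-hand side term and integrating the $p_0$-th power in time.

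The main work is the term $|\rho u_t|_{D^{1,q}}$. Expanding $\nabla(\rho u_t)=\nabla\rho\, u_t+\rho\nabla u_t$, the factor $\rho\nabla u_t$ is the dangerous one: I only control $|\nabla u_t|_{L^2}$ from the earlier lemmas, not its $L^q$ norm, so I must interpolate $\nabla u_t$ between $L^2$ and a higher norm such as $|u_t|_{D^2}$ (using $|\nabla u_t|_q\le C|\nabla u_t|_2^{1-a}|u_t|_{D^2}^{a}$ via Gagliardo-Nirenberg, where $a=\frac{3(q-2)}{2q}\in(0,1)$ for $q\in(3,6]$). This is exactly where the constraint $p_0\le\frac{4q}{5q-6}$ enters: the power $a$ determines how many time derivatives of $u_t$ in $L^2$ are affordable, and the threshold $\frac{4q}{5q-6}$ is precisely the exponent that makes $\int_0^t (t^{-1/2})^{\cdot}$-type singularities integrable after inserting the weighted bounds $t|u_t|_{D^1_0}^2\le Cc_2^{24}$ and $\int_0^t s|u_t|_{D^2}^2\,\dif s\le Cc_2^{22}$ from Lemma \ref{lem:4-3}. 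The terms $\rho v\cdot\nabla v$, $\nabla P$, and $\text{rot}H\times H=-\frac12\nabla|H|^2+H\cdot\nabla H$ are handled by Lemma \ref{zhen1} (the commutator/product estimates) together with the already-established bounds on $\|(\rho-\overline\rho,P-\overline P,H)\|_{H^2\cap W^{2,q}}$ from Lemmas \ref{lem:=1}-\ref{lem:2} and on $\|v\|$ from (\ref{jizhu}); none of these is singular in time beyond the $v$ contributions that are already $L^{p_0}$ in time by assumption.

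Concretely, the plan proceeds as follows. First I apply Lemma \ref{zhenok} to pass from $|u|_{D^{3,q}}$ to the $D^{1,q}$ norms of the source terms. Second, I estimate each product using Lemma \ref{zhen1} with $l=q$, distributing the low-regularity factor in $L^\infty$ (controlled through $H^2\cap W^{2,q}\hookrightarrow W^{1,q}\hookrightarrow L^\infty$ by Lemma \ref{gag}) and the high-regularity factor in $L^q$. Third, for the $\rho\nabla u_t$ piece I interpolate and raise to the power $p_0$, producing a time integrand of the form $s^{-\beta}(s|u_t|_{D^1_0}^2)^{\cdot}(s|u_t|_{D^2}^2)^{\cdot}$; I then apply H\"older in time with exponents chosen so that the weighted quantities from Lemma \ref{lem:4-3} can be pulled out as $Cc_2^{\cdot}$ and the remaining power of $s^{-1}$ stays integrable exactly when $p_0\le\frac{4q}{5q-6}$. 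Collecting the powers of $c_2$ from all terms gives the stated $Cc_2^{54}$.

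The hard part will be the time-integrability bookkeeping for $|\rho u_t|_{D^{1,q}}^{p_0}$: one must verify that the Gagliardo-Nirenberg interpolation exponent, combined with the weights $t^{1/2}$ carried by $u_t$ in the higher-order lemmas, produces a power of $s$ near the origin that is integrable precisely under the hypothesis $p_0\le\frac{4q}{5q-6}$. This is the delicate point where the admissible range of $p_0$ is forced, and tracking the exact accumulated power of $c_2$ through the interpolation and H\"older steps is what yields the exponent $54$.
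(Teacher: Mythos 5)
Your proposal follows essentially the same route as the paper: apply the Lam\'e elliptic estimate (Lemma \ref{zhenok}) to reduce $|u|_{D^{3,q}}$ to the $D^{1,q}$ norms of the source terms, isolate $|\nabla u_t|_q$ as the critical piece, interpolate it between $|\nabla u_t|_2$ and $|u_t|_{D^2}$ with the exponent $\frac{3(q-2)}{2q}$ (the paper passes through $L^6$, which is the same thing), and then use the time weights from Lemma \ref{lem:4-3} together with H\"older in time, the integrability requirement $\frac{2p_0q}{4q-p_0(3q-6)}<1$ being exactly your condition $p_0\le\frac{4q}{5q-6}$. The plan is correct and matches the paper's proof in all essentials, including the source of the exponent restriction.
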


\begin{proof}
From $ (\ref{eq:1.2rfv})_4$, via Lemma \ref{ok}, Holder's inequality and  Gagliardo-Nirenberg inequality,  we easily deduce that
\begin{equation}\label{zhen11}\begin{split}
|u|_{D^{3,q}}
\leq& \big(|\rho u_t+\rho v\cdot \nabla v|_{D^{1,q}} +|\nabla P|_{D^{1,q}}+|\text{rot}H\times H|_{D^{1,q}}+|u|_{D^{1,q}_0}\big)\\
\leq &   C(c^6_1+c^2_1|u_t|_\infty+c^2_1|\nabla u_t|_q+c^3_1|v|_{D^{2,q}}).
\end{split}
\end{equation}
Due to the Sobolev inequality and Young's inequality, we have
\begin{equation*}\begin{cases}
\displaystyle
|u_t|_\infty\leq C|u_t|^{1-\frac{3}{q}}_{q}\| u_t\|^{\frac{3}{q}}_{W^{1,q}}\leq C|\nabla u_t|_2 +C|\nabla u_t|_q, \ \text{when}\  \Omega \ is \  bounded,\\[8pt]
\displaystyle
|u_t|_\infty\leq C|u_t|^{\frac{6(q-3)}{3q+6(q-3)}}_{6}|\nabla u_t|^{\frac{3q}{3q+6(q-3)}}_{q}\leq C|\nabla u_t|_2 +C|\nabla u_t|_q, \ \text{when}\  \Omega=\mathbb{R}^3.
\end{cases}
\end{equation*}
Then we quickly obtain
\begin{equation*}\begin{split}
|u(t)|_{D^{3,q}}
\leq & Cc^2_1(|\nabla u_t|_2+|\nabla u_t|_q)+Cc^3_1|v|_{D^{2,q}}+Cc^{6}_1.
\end{split}
\end{equation*}
According to Lemmas \ref{lem:=1}-\ref{lem:4-3}, we have
\begin{equation}\label{moufan}\begin{split}
\int_{0}^{t}|u|^{p_0}_{D^{3,q}}\text{d}s
\leq & Cc^{12}_1+Cc^6_1\int_{0}^{t}\big(  | v|^{p_0}_{D^{2,q}}+|\nabla u_t|^{p_0}_2+|\nabla u_t|^{p_0}_q\big)\text{d}s\\
\leq& Cc^{12}_1+Cc^6_1\int_{0}^{t} |\nabla u_t|^{\frac{p_0(6-q)}{2q}}_{2}|\nabla u_t|^{\frac{p_0(3q-6)}{2q}}_{6}\text{d}s\\
\leq& Cc^{12}_1+Cc^6_1\int_{0}^{t}s^{-\frac{p_0}{2}} \big(s|\nabla u_t|^2_{2}\big)^{\frac{p_0(6-q)}{4q}}\big(s| u_t|^2_{D^2}\big)^{\frac{p_0(3q-6)}{4q}}\text{d}s\\
\leq& Cc^{12}_1+Cc^6_1\big(\sup_{[0,T_2]}s|\nabla u_t|^2_{2}\big)^{\frac{p_0(6-q)}{4q}}\int_{0}^{t}s^{-\frac{p_0}{2}} \big(s| u_t|^2_{D^2}\big)^{\frac{p_0(3q-6)}{4q}}\text{d}s\\
\leq& Cc^{12}_1+Cc^{30}_2\Big(\int_{0}^{t}s^{-\frac{2p_0q}{4q-p_0(3q-6)}} \text{d}s\Big)^{\frac{4q-p_0(3q-6)}{4q}}\Big(\int_{0}^{t} s| u_t|^2_{D^2}\text{d}s\Big)^{\frac{p_0(3q-6)}{4q}}\\
\leq& Cc^{54}_2
\end{split}
\end{equation}
due to $0<\frac{2p_0q}{4q-p_0(3q-6)}<1$ and $0<\frac{p_0(3q-6)}{4q}<1$.
\end{proof}
\begin{lemma}[\textbf{Higher order estimate of the velocity $u$}]\label{lem:4-4}
\begin{equation*}
\begin{split}
t^2|u(t)|_{D^{3,q}}+t^2|u_t(t)|^2_{ D^2}+t^2|\sqrt{\rho}u_{tt}(t)|^2_2+\int_{0}^{t}s^2|u_{tt}(s)|^2_{D^1_0}\text{d}s\leq Cc^{34}_3
\end{split}
\end{equation*}
for $0\leq t \leq T_3=\min(T^*,(1+c_3)^{-8})$.
 \end{lemma}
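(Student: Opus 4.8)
The plan is to push the argument of Lemma \ref{lem:4-3} one order higher in time, producing a $t^2$-weighted energy estimate for the equation satisfied by $u_{tt}$. First I would differentiate the $t$-differentiated momentum equation (\ref{gh78}) once more in $t$, obtaining
\begin{equation*}
\rho u_{ttt}+Lu_{tt}=-\nabla P_{tt}-\rho_{tt}u_t-2\rho_t u_{tt}-(\rho v\cdot\nabla v)_{tt}+(\text{rot}H\times H)_{tt}.
\end{equation*}
Testing this against $u_{tt}$ and integrating over $\Omega$, the term $\int_\Omega\rho u_{ttt}\cdot u_{tt}$ generates $\frac12\frac{d}{dt}\int_\Omega\rho|u_{tt}|^2-\frac12\int_\Omega\rho_t|u_{tt}|^2$ while the Lamé term generates the dissipation $\int_\Omega(\mu|\nabla u_{tt}|^2+(\lambda+\mu)(\text{div}u_{tt})^2)$, so one arrives at an energy identity of the same shape as (\ref{zhen4}) but with $u_{tt}$ replacing $u_t$. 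This is the correct vehicle because the target quantities $t^2|\sqrt\rho u_{tt}|_2^2$ and $\int s^2|u_{tt}|^2_{D^1_0}\,ds$ require the Lamé dissipation to act on $u_{tt}$ itself, which only appears after the second time differentiation.

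Next I would bound every term on the right by Hölder's inequality, the Gagliardo--Nirenberg inequalities of Lemma \ref{gag} and Young's inequality, absorbing all factors $|\nabla u_{tt}|_2$ into the dissipation and all factors $|\sqrt\rho u_{tt}|_2$ into a Gronwall coefficient, feeding in the bounds for $\rho,P,H$ and their time derivatives from Lemmas \ref{lem:=1}--\ref{lem:2} and the velocity bounds from Lemma \ref{lem:4-1}. The magnetic contribution is treated exactly as the term $I_{10}$ in (\ref{zhen7}): using (\ref{zhoumou}) one writes $(\text{rot}H\times H)_{tt}=\text{div}(H\otimes H-\frac12|H|^2I_3)_{tt}$ and integrates by parts so that $\nabla u_{tt}$ appears, after which only $|H_{tt}|_2$, $|H_t|_4$ and $|H|_\infty$ enter, all controlled by Lemma \ref{lem:=1} (the integrated bound $\int_0^t|H_{tt}|^2_2\,ds\le Cc_1^3$ suffices once multiplied by the bounded weight $s^2|H|^2_\infty$). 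The genuinely new inputs are the factors of $v_{tt}$ arising from $(\rho v\cdot\nabla v)_{tt}=\rho_{tt}(v\cdot\nabla v)+2\rho_t(v\cdot\nabla v)_t+\rho(v_{tt}\cdot\nabla v+2v_t\cdot\nabla v_t+v\cdot\nabla v_{tt})$; these are precisely the quantities carrying the weight $c_3$ in the assumptions (\ref{jizhu}), namely $t^2|v_t|^2_{D^2}$ and $\int t^2|v_{tt}|^2_{D^1_0}\,dt$, and they enter only as source terms (not multiplying the unknown).

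I would then multiply the differential inequality by $t^2$, rewrite $t^2\frac{d}{dt}\int_\Omega\rho|u_{tt}|^2=\frac{d}{dt}\big(t^2\int_\Omega\rho|u_{tt}|^2\big)-2t\int_\Omega\rho|u_{tt}|^2$, and integrate over $(\tau,t)$. The emerging term $\int_\tau^t s\int_\Omega\rho|u_{tt}|^2\,ds$ is already majorized by $Cc_2^{24}$ via Lemma \ref{lem:4-3}. To remove the initial contribution $\tau^2|\sqrt\rho u_{tt}(\tau)|^2_2$ I would apply Lemma \ref{bei} to $h=\sqrt{s}\,\sqrt\rho u_{tt}$, which lies in $L^2([0,T];L^2)$ by Lemma \ref{lem:4-3}; this yields a sequence $s_k\to0$ with $s_k^2|\sqrt\rho u_{tt}(s_k)|^2_2\to0$, in direct analogy with the treatment of $\nabla u_t$ in (\ref{e2})--(\ref{ew2}). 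Letting $\tau=s_k\to0$ and applying Gronwall's inequality (whose integrating factor is finite on $[0,T_3]$ because $\int_0^t(1+|\nabla v_t|^2_2)\,ds$ is controlled by (\ref{jizhu})) then closes the bounds for $t^2|\sqrt\rho u_{tt}(t)|^2_2$ and $\int_0^t s^2|u_{tt}|^2_{D^1_0}\,ds$, the smallness of $T_3=\min(T^*,(1+c_3)^{-8})$ being used to absorb the accumulated powers of $c_3$ into $Cc_3^{34}$. Finally, the remaining two quantities are recovered from the elliptic regularity of the Lamé operator (Lemma \ref{zhenok}): applied to (\ref{gh78}) it bounds $t^2|u_t|^2_{D^2}$ by $t^2|\rho u_{tt}|^2_2\le|\rho|_\infty\,t^2|\sqrt\rho u_{tt}|^2_2$ plus lower-order terms, and applied in $W^{2,q}$ to $(\ref{eq:1.2rfv})_4$ (as in (\ref{zhen11})) it bounds $t^2|u|_{D^{3,q}}$ through $|\nabla u_t|_q\lesssim|u_t|_{D^2}$.

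The main obstacle I anticipate is the bookkeeping of the twice-differentiated convective and magnetic terms: each occurrence of $v_{tt}$, $H_{tt}$, $\rho_{tt}$ and $P_{tt}$ must be paired with the correct power of $s$ so that, after multiplication by $s^2$ and integration, every contribution is dominated by a fixed power of $c_3$ and no uncontrolled quantity survives at $t=0$. The magnetic terms are the subtlest point, since $H$ carries no smoothing; one must exploit the transport structure of $(\ref{eq:1.2rfv})_1$ to express $H_{tt}$ through $v,v_t,H,H_t$ before estimating, rather than trying to gain regularity directly.
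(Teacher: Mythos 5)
Your proposal follows essentially the same route as the paper's proof: differentiate (\ref{gh78}) in $t$ (your equation for $\rho u_{ttt}+Lu_{tt}$ is an equivalent rearrangement of the paper's (\ref{e3})), test against $u_{tt}$, estimate the resulting terms with the magnetic contribution handled in divergence form exactly as $I_{17}$, weight by $t^2$, remove the initial layer via Lemma \ref{bei} applied to $t^{1/2}\sqrt{\rho}u_{tt}$, close with Gronwall, and recover $t^2|u_t|^2_{D^2}$ and $t^2|u|_{D^{3,q}}$ from the Lam\'e elliptic estimates as in (\ref{wwzhen11}). The argument is correct and matches the paper's in all essentials.
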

 \begin{proof}
Differentiating the equations  (\ref{gh78}) with respect to $t$, we have
%
%
%
\begin{equation}\label{e3}
\begin{split}
\rho u_{ttt}+Lu_{tt}=&-\nabla P_{tt}-\rho (v\cdot \nabla v)_{tt}-2\rho_t(v\cdot \nabla v+u_t)_t\\
&-\rho_{tt}(v\cdot \nabla v+u_t)+(\text{rot}H\times H)_{tt}.
\end{split}
\end{equation}
Multiplying (\ref{e3}) by $u_{tt}$ and integrating  over $\Omega$, we have
\begin{equation}\label{ebn}
\begin{split}
&\frac{1}{2}\frac{d}{dt}\int_{\Omega}\rho |u_{tt}|^2 \text{d}x+\int_{\Omega}(\mu|\nabla u_{tt}|^2+(\lambda+\mu)(\text{div}u_{tt})^2) \text{d}x\\
=&  \int_{\Omega}\Big( P_{tt}\text{div}u_{tt}-\rho(v\cdot \nabla v)_{tt}\cdot u_{tt}-2\rho_t(v\cdot \nabla v)_t\cdot u_{tt}-\rho_{tt}(v\cdot \nabla v )\cdot u_{tt}\Big)\text{d}x\\
&+\int_{\Omega}\Big(-\frac{3}{2}\rho_t|u_{tt}|^2-\rho_{tt}u_t\cdot u_{tt}+(\text{rot}H\times H)_{tt}\cdot u_{tt}\Big)\text{d}x
=\Lambda_5(t)\equiv :\sum_{i=11}^{17}I_i.
\end{split}
\end{equation}
From Lemmas \ref{lem:=1}-\ref{lem:4-4*}, Holder's inequality and  Gagliardo-Nirenberg inequality,  we obtain
\begin{equation}\label{e4}
\begin{split}
I_{11}\leq&C |P_{tt}|_{2} |\nabla u_{tt}|_{2},\ I_{12}\leq C|\rho|^{\frac{1}{2}}_{\infty}|\sqrt{\rho}u_{tt}|_{2}  \big( |v_{tt}|_{D^1_0}\|\nabla v\|_{1}+| v_t|_{D^1_0}\|\nabla v_t\|_{1}\big),\\
I_{13}\leq&C |\rho_t|_{3} \|\nabla v\|_{1} | v_t|_{D^1_0} |\nabla u_{tt}|_2,\ I_{14}\leq C|\rho_{tt}|_2 \|\nabla v\|^2_{1} |\nabla u_{tt}|_2,\\
I_{15}\leq&C|\rho|^{\frac{1}{2}}_{\infty}\|\nabla v\|_{1}|\sqrt{\rho} u_{tt}|_2|\nabla u_{tt}|_2,\
I_{16}\leq C|\rho_t|_{3} \|\nabla v\|_{1} |u_t|_{D^1_0}|\nabla u_{tt}|_2\\
&+C|\rho|^{\frac{3}{4}}_{\infty} |v_t|_{D^1_0}  \big(|u_t|^{\frac{1}{2}}_{D^1_0} |\sqrt{\rho}u_t|^{\frac{1}{2}}_2|\nabla u_{tt}|_2+|u_t|_{D^1_0} |\sqrt{\rho}u_{tt}|^{\frac{1}{2}}_2|\nabla u_{tt}|^{\frac{1}{2}}_2\big),
\end{split}
\end{equation}
where we have used the fact that $\rho_t=\text{div}(\rho v)$, and
\begin{equation}\label{eh4}
\begin{split}
I_{17}=&-\int_{\Omega}(\text{rot}H\times H)_{tt}\cdot u_{tt}\text{d}x
=\int_{\Omega}\big(H\otimes H-\frac{1}{2}|H|^2I_3\big)_{tt}: \nabla u_{tt}\text{d}x\\
\leq& C|\nabla u_{tt}|_2|H_t|^2_4+C|\nabla u_{tt}|_2|H_{tt}|_2| H|_{\infty}.
\end{split}
\end{equation}
Then from Young's inequality, the above estimates (\ref{e4})-(\ref{eh4}) imply that
\begin{equation}\label{fan}
\begin{split}
t^2\Lambda_5(t) \leq& \frac{\mu}{2}t^2|u_{tt}|^2_{D^1_0}+C(c^6_3+c^2_3|v_t|^2_{D^1_0}) t^2|\sqrt{\rho}u_{tt}|^2_2+Cc^4_3t^2(|\rho_{tt}|^2_{2}+|P_{tt}|^2_{2})\\
&+Cc^4_3t^2|H_{tt}|^2_{2}+Ct^2(|v_{tt}|^2_{D^1_0}+|v_t|^2_{D^2})+Cc^{6}_3t^2|u_t|^2_{D^2}+Cc^{30}_3.
\end{split}
\end{equation}
Then multiplying (\ref{ebn}) by $t^2 $ and integrating  over $(\tau,t)$ ($\tau\in (0,t)$), we obtain
\begin{equation}\label{nvk3355}
\begin{split}
&t^2|\sqrt{\rho}u_{tt}(t)|^2_2 +\int_{\tau}^{t}s^2|\nabla u_{tt}|^2_2 \text{d}s\\
\leq& \tau^2|\sqrt{\rho}u_{tt}(\tau)|^2_2+C\int_{\tau}^{t}(c^6_3+c^2_3|v_t|^2_{D^1_0}) s^2|\sqrt{\rho}u_{tt}|^2_2\text{d}s +Cc^{30}_3\\
\end{split}
\end{equation}
for $\tau \leq t\leq T_2$. Due to Lemma \ref{lem:4-3}, we have $t^{\frac{1}{2}}\sqrt{\rho}u_{tt}\in L^2([0,T_2];L^2)$, then from Lemma \ref{bei}, there exists a sequence ${s_k}$ such that
$$
s_k\rightarrow 0,\quad \text{and} \quad s^2_k|\sqrt{\rho}u_{tt}(s_k)|^2_2\rightarrow 0, \quad \text{as} \quad k\rightarrow \infty.
$$
Therefore, letting $\tau=s_k\rightarrow 0$ in (\ref{nvk3355}), we conclude that
\begin{equation}\label{nvk3344}
\begin{split}
&t^2|\sqrt{\rho}u_{tt}(t)|^2_2 +\int_{0}^{t}s^2|\nabla u_{tt}|^2_2 \text{d}s
\leq  C\int_{\tau}^{t}(c^6_3+c^2_3|v_t|^2_{D^1_0}) s^2|\sqrt{\rho}u_{tt}|^2_2\text{d}s +Cc^{30}_3.
\end{split}
\end{equation}
Via the Gronwall's inequality,  for $0\leq t \leq T_3$, we have
\begin{equation*}
\begin{split}
&t^2|\sqrt{\rho}u_{tt}(t)|^2_2 +\int_{0}^{t}s^2|\nabla u_{tt}|^2_2 \text{d}s
\leq Cc^{30}_3 \text{exp}\Big(\int_{\tau}^{t}(c^6_3+c^2_3|v_t|^2_{D^1_0}) \text{d}s\Big)\leq Cc^{30}_3.
\end{split}
\end{equation*}
Moreover, from Lemma \ref{ok}  and  (\ref{zhen11}), we quickly have
\begin{equation}\label{wwzhen11}\begin{split}
t^2|u_{t}|^2_{D^2}
\leq& Ct^2\big(|(\rho u_t+\rho v\cdot \nabla v)_t|^2_{2} +|\nabla P_t|^2_{2}+|(\text{rot}H\times H)_t|^2_{2}+|u_t|^2_{D^1_0}\big) \leq Cc^{32}_3,\\
t^2|u|_{D^{3,q}}
\leq &   Ct^2(c^4_1+c^2_1|u_t|_\infty+c^2_1|\nabla u_t|_q+Cc^3_1|v|_{D^{2,q}})\leq Cc^{34}_3.
\end{split}
\end{equation}
\end{proof}
Then combining the above lemmas,  for $0\leq t\leq T_*=\min(T^*,(1+c_3)^{-8})$,  we have the following a priori estimate:
\begin{equation}\label{hym}
\begin{split}
\|(H,\rho-\overline{\rho},P-\overline{P})(t)\|_{H^2\cap W^{2,q}}+ \|(H_t,\rho_t,P_t)(t)\|_{H^1\cap L^q}\leq& Cc^2_1,\\
\int_0^t |(H_{tt},\rho_{tt},P_{tt})|^2_2 \text{d}s+t|(H_t, \rho_t,P_t)(t)|^2_{D^{1,q}}\leq& Cc^3_2,\\
|u(t)|^2_{D^1_0\cap D^2}+|\sqrt{\rho}u_t(t)|^2_{2}+\int_{0}^{t}\Big(|u_t|^2_{D^1_0}+|u|^2_{D^3}\Big)\text{d}s\leq& Cc^{12}_1,\\
t|u_t(t)|^2_{D^1_0}+t|u(t)|^2_{D^3}+\int_{0}^{t}\Big(|u|^{p_0}_{D^{3,q}}+s\big(|u_t|^2_{D^{2}}+|\sqrt{\rho} u_{tt}|^2_{2}\big)\Big)\text{d}s\leq& Cc^{54}_2,\\
t^2|u(t)|^2_{D^{3,q}}+t^2|u_t(t)|^2_{ D^2}+t^2|\sqrt{\rho}u_{tt}(t)|^2_2+\int_{0}^{t}s^2|u_{tt}|^2_{D^1_0}\text{d}s\leq& Cc^{34}_3.
\end{split}
\end{equation}

\subsection{Unique solvability of the IBVP (\ref{eq:1.2rfv}) and (\ref{th78})-(\ref{th79}) with vacuum}\ \\

In this section, we will construct a sequence of approximation solutions to the linearized problem  (\ref{eq:1.2rfv}) with vacuum.
\begin{lemma}\label{lembbn}Let (\ref{ghk1}) and (\ref{houmian})-(\ref{jizhu}) hold.
 Assume $(H_0,\rho_0,u_0)$ satisfies (\ref{th78})-(\ref{th79}).
Then there exists a unique classical solution $(H,\rho,u)$ to  (\ref{eq:1.2rfv}) satisfying
\begin{equation}\label{gujq}\begin{split}
&(H,\rho-\overline{\rho},P-\overline{P})\in C([0,T_*];H^2\cap W^{2,q}),\\
&u\in C([0,T_*];D^1_0\cap D^2)\cap  L^2([0,T_*];D^3)\cap L^{p_0}([0,T_*];D^{3,q}),\ u_t\in  L^2([0,T_*];D^1_0),\\
&\sqrt{\rho}u_t\in L^\infty([0,T_*];L^2),\ t^{\frac{1}{2}}u\in L^\infty([0,T_*];D^3),\ t^{\frac{1}{2}}\sqrt{\rho}u_{tt}\in L^2([0,T_*];L^2),\\
&t^{\frac{1}{2}}u_t\in L^\infty([0,T_*];D^1_0)\cap L^2([0,T_*];D^2),\ tu\in L^\infty([0,T_*]; D^{3,q}),\\
& tu_{tt}\in L^2([0,T_*];D^1_0),\ tu_t\in L^\infty([0,T_*]; D^{2}),\ t\sqrt{\rho}u_{tt}\in L^\infty([0,T_*];L^2).
\end{split}
\end{equation}
Moreover, the solution $(H,\rho,u)$ also satisfies the estimate (\ref{hym}).
\end{lemma}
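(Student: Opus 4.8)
The plan is to obtain the vacuum solution as the limit of a sequence of non-vacuum approximations produced by Lemma \ref{lem1}, and to recover uniqueness by a direct energy argument on the difference of two solutions.

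First I would regularize the initial density. For $\delta\in(0,1)$ set $\rho_0^\delta=\rho_0+\delta$, so that $\rho_0^\delta\geq\delta>0$ while $\rho_0^\delta-\overline{\rho}=(\rho_0-\overline{\rho})+\delta$ still lies in $H^2\cap W^{2,q}$ with a norm bounded independently of $\delta$. The compatibility condition (\ref{th79}) is preserved after replacing $g_1$ by $g_1^\delta=\sqrt{\rho_0}\,g_1/\sqrt{\rho_0+\delta}$, for which $|g_1^\delta|_2\leq|g_1|_2$ uniformly in $\delta$; similarly $P_0^\delta=A(\rho_0^\delta)^\gamma$ converges to $P_0$ in $H^2\cap W^{2,q}$. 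Thus the bound (\ref{houmian}) holds for the regularized data with the same constant $c_0$ up to an absolute factor, and Lemma \ref{lem1} yields for each $\delta$ a unique classical solution $(H^\delta,\rho^\delta,u^\delta)$ of (\ref{eq:1.2rfv}) with $\rho^\delta\geq\underline{\delta}>0$.

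Next I would invoke the uniform estimates. The entire chain of a priori bounds in Lemmas \ref{lem:=1}--\ref{lem:4-4}, and hence the collected estimate (\ref{hym}), depends only on the fixed constants and on $c_0,c_1,c_2,c_3$ through the hypotheses (\ref{houmian})--(\ref{jizhu}) on the given field $v$; crucially, none of these bounds uses the lower bound $\underline{\delta}$ of the density. Consequently $(H^\delta,\rho^\delta,u^\delta)$ satisfies (\ref{hym}) on the common interval $[0,T_*]$, $T_*=\min(T^*,(1+c_3)^{-8})$, with constants independent of $\delta$. Then I would pass to the limit $\delta\to0$: from these uniform bounds together with the time-derivative bounds read off from the equations, the Aubin--Lions lemma (Lemma \ref{aubin}) gives, along a subsequence, strong convergence of $\rho^\delta,P^\delta,H^\delta$ in $C([0,T_*];H^1)$ and of $u^\delta$ in $C([0,T_*];D^1_0)$, together with the weak-$*$ convergences of the higher-order norms and of $u^\delta_t,u^\delta_{tt}$. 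Because (\ref{eq:1.2rfv}) is linear in each unknown for the \emph{given} coefficient $v$, the only products requiring strong convergence are $\rho^\delta v\cdot\nabla v$ (strong $\rho^\delta$ against the fixed datum $v\cdot\nabla v$), $\rho^\delta u^\delta_t$ (strong $\rho^\delta$ times weak $u^\delta_t$), and the quadratic term $\text{rot}\,H^\delta\times H^\delta$ (handled by strong convergence of $H^\delta$). Passing to the limit in the weak formulation identifies $(H,\rho,u)$ as a solution attaining the prescribed data, and weak lower semicontinuity transfers (\ref{hym}), and hence the regularity class (\ref{gujq}), to the limit.

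Finally, uniqueness follows by subtracting two solutions with the same data: the differences solve the same linear system with zero initial data, and a standard $L^2$ energy estimate controlled by (\ref{hym}) and Gronwall's inequality forces them to vanish. The step I expect to be most delicate is not the existence of the limit but the recovery of the \emph{continuity in time} claimed in (\ref{gujq}) --- upgrading the weak-$*$ limits (e.g. $u\in L^\infty([0,T_*];D^1_0\cap D^2)$ and $\sqrt{\rho}u_t\in L^\infty([0,T_*];L^2)$) to genuine strong continuity at every $t$, including $t=0$, where the compatibility condition (\ref{th79}) must be used exactly as in (\ref{nvk33}) to make sense of $\sqrt{\rho}u_t(0)$. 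This is handled by combining the weak continuity supplied by the equations with the convergence of the corresponding norms.
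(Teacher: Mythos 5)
Your proposal follows essentially the same route as the paper: regularize the density by $\rho_0^\delta=\rho_0+\delta$, invoke the non-vacuum existence result of Lemma \ref{lem1}, observe that the a priori estimate (\ref{hym}) is independent of the lower bound of the density, pass to the limit by Aubin--Lions plus lower semicontinuity, and finish with an energy argument for uniqueness and a separate argument for time-continuity. Two technical points are glossed over, though neither changes the strategy. First, since the initial pressure is also perturbed to $P(\rho_0^\delta)$, the compatibility function for the regularized data is not simply $\sqrt{\rho_0}\,g_1/\sqrt{\rho_0+\delta}$; the paper takes
$g_1^\delta=(\rho_0/\rho_0^\delta)^{1/2}g_1+\nabla\big(P(\rho_0^\delta)-P(\rho_0)\big)/(\rho_0^\delta)^{1/2}$,
and one must check that the second term is small (or at least uniformly bounded) in $L^2$ as $\delta\to0$ so that (\ref{houmian}) holds with the same $c_0$. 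Second, your ``standard $L^2$ energy estimate'' for uniqueness is not immediately justified when $\Omega=\mathbb{R}^3$, because it is not known a priori that $\sqrt{\rho}\,\overline{u}\in L^\infty([0,T_*];L^2)$ for the difference $\overline{u}=u_1-u_2$; the paper multiplies by a cut-off $\varphi^R$, estimates the commutator terms on $\Omega_R\setminus B_{R/2}$, and lets $R\to\infty$ before applying Gronwall. With those two repairs your argument coincides with the paper's proof.
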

\begin{proof}
\underline{Step 1}: Existence.
We define $\rho_0=\rho_0+\delta$ for each $\delta\in (0,1)$. Then from the compatibility condition (\ref{th79}), we have
\begin{equation*}
\begin{split}
&Lu_0+\nabla P(\rho^\delta_0)-\mu_0\text{rot}H_0\times H_0=(\rho^\delta_0)^{\frac{1}{2}}_0 g^\delta_1,
\end{split}
\end{equation*}
where
\begin{equation*}\begin{split}
g^\delta_1=&\Big(\frac{\rho_0}{\rho^\delta_0}\Big)^{\frac{1}{2}}g_1+\frac{\nabla (P(\rho^\delta_0)-P(\rho_0))}{(\rho^\delta_0)^{\frac{1}{2}}}.
\end{split}
\end{equation*}

Then according to  assumption (\ref{houmian}), for  sufficiently small $\delta> 0$, we have
\begin{equation*}\begin{split}
1+\overline{\rho}+\delta+\|(\rho^\delta_0-(\overline{\rho}+\delta),P(\rho^\delta_0)-P(\overline{\rho}+\delta),H_0)\|_{H^2\cap W^{2,q}}+|u_0|_{D^1_0\cap D^2}+|g^\delta_1|_{2}\leq c_0.
\end{split}
\end{equation*}
Therefore, corresponding to $(H_0,\rho^\delta_0,P(\rho^\delta_0),u_0)$, there exists a unique classical solution $(H^\delta,\rho^\delta,P^\delta,u^\delta)$ satisfying (\ref{hym}). Then there exists a subsequence of solutions $(H^\delta,\rho^\delta,P^\delta,u^\delta)$ converges to a limit $(H,\rho,P,u)$ in weak or weak* sense. And for any $R> 0$, due to Lemma \ref{aubin}, there exists a subsequence of solutions $(H^\delta,\rho^\delta,P^\delta,u^\delta)$ satisfying
\begin{equation}\label{ert}\begin{split}
&(H^\delta,\rho^\delta,P^\delta,u^\delta)\rightarrow (H,\rho,P,u)\ \text{in } C([0,T_*];H^1(\Omega_R)),
\end{split}
\end{equation}
where $\Omega_R=\Omega \cap B_R$. Combining the lower semi-continuity of norms and (\ref{ert}), we know that $(H,\rho,P,u)$ also satisfies the local estimates (\ref{hym}).
So it is easy to show that $(H,\rho,P,u)$ is a solution in distribution sense and  satisfies the regularity
\begin{equation}\label{yong}\begin{split}
&(H,\rho-\overline{\rho},P-\overline{P})\in L^\infty([0,T_*];H^2\cap W^{2,q}),\\
&u\in L^\infty([0,T_*];D^1_0\cap D^2)\cap  L^2([0,T_*];D^3)\cap L^{p_0}([0,T_*];D^{3,q}),\\
& u_t\in  L^2([0,T_*];D^1_0),\  \sqrt{\rho}u_t\in L^\infty([0,T_*];L^2),\\
&  t^{\frac{1}{2}}u\in L^\infty([0,T_*];D^3),\ t^{\frac{1}{2}}\sqrt{\rho}u_{tt}\in L^2([0,T_*];L^2),\\
&t^{\frac{1}{2}}u_t\in L^\infty([0,T_*];D^1_0)\cap L^2([0,T_*];D^2),\ tu\in L^\infty([0,T_*]; D^{3,q}),\\
& tu_{tt}\in L^2([0,T_*];D^1_0),\ tu_t\in L^\infty([0,T_*]; D^{2}),\ t\sqrt{\rho}u_{tt}\in L^\infty([0,T_*];L^2).
\end{split}
\end{equation}
\underline{Step 2}: Uniqueness.
Let $(H_1,\rho_1,u_1)$ and $(H_2,\rho_2,u_2)$ be two solutions. Due to Lemma  \ref{lem1} in Section 3.1, we know $\rho_1=\rho_2$ and $H_1=H_2$.
  For the momentum equations $(\ref{eq:1.2rfv}) _4$, let $\overline{u}=u_1-u_2$, we have
 \begin{equation}\label{goplm}
\begin{split}
\rho \overline{u}_t-\mu\triangle \overline{u}-(\lambda+\mu) \nabla\text{div} \overline{u}=0,
\end{split}
\end{equation}
because we do not know  whether $\sqrt{\rho}\overline{u}\in L^\infty ([0,T_*];L^2(\Omega))$ or not, so we consider this equation in  bounded domain $\Omega_R$. We define
$\varphi^R(x)=\varphi(x/R)$,
where $\varphi\in C^\infty_c(B_1)$ is a smooth  cut-off function such that $\varphi=1$ in $ B_{1/2}$.
Let $\overline{u}^R=\varphi^R(t,x)u(t,x)$, we have
 \begin{equation}\label{goplm1}
\begin{split}
\rho \overline{u}^R_t-\mu\varphi^R\triangle \overline{u}-\varphi^R(\lambda+\mu) \nabla \text{div} \overline{u}=0.
\end{split}
\end{equation}
Therefore, multiplying (\ref{goplm1}) by $\overline{u}^R$ and integrating over $[0,t]\times \Omega_R$ ($t\in (0,T_*]$), we have
 \begin{equation}\label{goplm2}
\begin{split}
&\frac{1}{2}\int_{\Omega_R}\rho |\overline{u}^R|^2(t)\text{d}x+\int_0^t \int_{\Omega_R} \Big(\mu(\varphi^R)^2|\nabla \overline{u}|^2+(\lambda+\mu) (\varphi^R)^2|\text{div} \overline{u}|^2\Big)\text{d}x\text{d}s,\\
=&\int_0^t \int_{\Omega_R}\rho v \cdot \nabla \overline{u}^R \cdot \overline{u}^R \text{d}x\text{d}s-2\mu\int_0^t \int_{\Omega_R} \varphi^R (\overline{u} \cdot\nabla \overline{u})\cdot \nabla\varphi^R \text{d}x\text{d}s\\
&-2\int_0^t \int_{\Omega_R}(\lambda+\mu) \varphi^R\text{div}\overline{u} \nabla\varphi^R\cdot \overline{u} \text{d}x\text{d}s\doteq A_1+A_2+A_3.
\end{split}
\end{equation}
From Holder's inequality and Sobolev's imbedding theorem, we have
\begin{equation*}
\begin{split}
|A_1|\leq& \int_0^t \int_{\Omega_R } |\varphi^R\rho v \cdot  \nabla \overline{u}   \cdot  \overline{u}_R| \text{d}x\text{d}s+\int_0^t \int_{\Omega_R } |\rho\overline{u}_R|  |\nabla\varphi^R|v|\overline{u}| \text{d}x\text{d}s\\
\leq &C \int_0^t|\sqrt{\rho}\overline{u}_R|^2_2\text{d}s+\int_0^t\frac{\mu}{2}(\varphi^R)^2|\nabla \overline{u}|^2_2\text{d}s+\frac{C}{R^2}\int_0^t \int_{(\Omega_R \setminus B_{R/2})} |\overline{u}|^2  \text{d}x\text{d}s,\\
|A_2|\leq& \frac{C}{R}\int_0^t \int_{(\Omega_R \setminus B_{R/2})} |\overline{u}| |\nabla \overline{u}| \text{d}x\text{d}s\\
\leq &\frac{C}{R^2}\int_0^t \int_{(\Omega_R \setminus B_{R/2})} |\overline{u}|^2 \text{d}x\text{d}s+C\int_0^t \int_{(\Omega_R \setminus B_{R/2})} |\nabla \overline{u}|^2 \text{d}x\text{d}s\\
\leq & \frac{C}{R^2}\big|\Omega_R \setminus B_{R/2}\big|^{\frac{2}{3}} \int_0^t \Big(\int_{(\Omega_R \setminus B_{R/2})} |\overline{u}|^6 \text{d}x\Big)^{\frac{1}{3}}\text{d}s+C\int_0^t \int_{(\Omega_R \setminus B_{R/2})} |\nabla \overline{u}|^2 \text{d}x\text{d}s\\
\leq &C \int_0^{T_*} |\nabla \overline{u}(s)|^2_{L^2(\Omega_R \setminus B_{R/2})} \text{d}s\rightarrow 0\quad \text{as}\quad R\rightarrow \infty.
\end{split}
\end{equation*}
Similarly, we  can also obtain that
$$
|A_3|\leq C \int_0^{T_*} |\nabla \overline{u}(s)|^2_{L^2(\Omega_R \setminus B_{R/2})} \text{d}s\rightarrow 0\quad \text{as}\quad R\rightarrow \infty.
$$
Then from the above estimates, we deduce that
 \begin{equation}\label{kaka}
\begin{split}
&\frac{1}{2}\int_{\Omega_R}\rho |\overline{u}^R|^2(t)\text{d}x+\int_0^t \int_{\Omega_R} \mu(\varphi^R)^2|\nabla \overline{u}|^2\text{d}x\text{d}s\leq C \int_0^t|\sqrt{\rho}\overline{u}_R|^2_2\text{d}s+Q_R,
\end{split}
\end{equation}
where $
Q_R\rightarrow 0\quad  \text{as}\quad R\rightarrow \infty$.
Then letting $R\rightarrow \infty$ in (\ref{kaka}), via Gronwall's inequality,  we derive that $\overline{u}\equiv 0$, which means that $u_1=u_2$.

\underline{Step 3}: Time-continuity of the solution $(H,\rho, u,P)$. Firstly, the time-continuity of  $\rho$, $P$  and $H$ can be obtained by Lemma \ref{lem1}.  Secondly, from a classical embedding result (see \cite{gandi}), we have $ u \in C([0,T_*];D^1_0 )\cap  C([0,T_*];D^2-\textrm{weak})$.
From the momentum equations $(\ref{eq:1.2rfv})_4$, we know that
$(\rho u_t)_t\in L^2([0,T_*];H^{-1})$. Due to $\rho u_t\in L^2([0,T_*];D^1_0)$, we have immediately  that $\rho u_t\in C([0,T_*];D^1_0)$.
Similarly, from the following equations,
$$
Lu=-\rho u_t-\rho (v\cdot \nabla) v
  -\nabla P +\text{rot}H\times H\equiv F,
$$
where $F\in C([0,T_*];L^2)$, we can obtain $u\in C([0,T_*];D^2)$.
\end{proof}

\subsection{Proof of Theorem \ref{th5}}\ \\

Based on   Lemma \ref{lembbn}, now we give the proof of Theorem \ref{th5}. 
We first fix a positive constant $c_0$ sufficiently large such that
\begin{equation}\label{houmianq}\begin{split}
2+\overline{\rho}+\|(\rho_0-\overline{\rho},P_0-\overline{P},H_0)\|_{H^2\cap W^{2,q}}+|u_0|_{D^1_0\cap D^2}+|g_1|_{2}\leq c_0.
\end{split}
\end{equation}
Then let $u^0\in C([0,+\infty);D^1_0\cap D^2)\cap  L^{p_0}([0,+\infty);D^{3,q}) $ be the unique solution to the following linear parabolic problem
$$
h_t-\triangle h=0  \quad (0,+\infty)\times \Omega \quad \text{and} \quad h(0)=u_0 \quad \text{in} \quad \Omega.
$$
Then taking a small time $T^\epsilon\in (0,T_*]$, we have
\begin{equation*}\begin{split}
\sup_{0\leq t \leq T^\epsilon}|u^0(t)|^2_{D^1_0\cap D^2}+\int_{0}^{T^\epsilon} \Big( |u^0|^2_{D^3}+|u^0|^{p_0}_{ D^{3,q}}+|u^0_t|^2_{D^1_0}\Big)\text{d}t \leq& c_1,\\
\text{ess}\sup_{0\leq t \leq T^\epsilon}\Big(t|u^0_t(t)|^2_{D^1_0}+t|u^0(t)|^2_{D^3}\Big)+\int_{0}^{T^\epsilon} t|u^0_{t}|^2_{D^2}\text{d}t \leq& c_2,\\
\text{ess}\sup_{0\leq t \leq T^\epsilon}\Big(t^2|u^0(t)|^2_{ D^{3,q}}+t^2|u^0_t(t)|_{D^2}\Big)+\int_{0}^{T^\epsilon} t^2|u^0_{tt}|^2_{D^1_0}\text{d}t \leq& c_3
\end{split}
\end{equation*}
for constants $c_{i}'s$ with $1< c_0\leq c_1 \leq c_2 \leq  c_3 $.

\begin{proof}
From Lemma \ref{lembbn}, we know that there exists a unique classical solution $(H^1,\rho^1,P^1, u^1)$ to the linearized problem (\ref{eq:1.2rfv}) with $v$ replaced by $u^0$, which satisfies the estimate (\ref{hym}). Similarly, we construct approximate solutions $(H^{k+1},\rho^{k+1},P^{k+1}, u^{k+1})$ inductively, as follows: assuming that $u^{k}$ was defined for $k\geq 1$, let  $(H^{k+1},\rho^{k+1},P^{k+1}, u^{k+1})$  be the unique classical solutions to the problem (\ref{eq:1.2rfv})  with $v$ replaced by $u^{k}$ as following
 \begin{equation}
\label{eq:1.2rfvq}
\begin{cases}
\displaystyle
H^{k+1}_t+u^k\cdot \nabla H^{k+1}+(\text{div}u^kI_3-\nabla u^k)H^{k+1}=0, \\[6pt]
\displaystyle
\text{div}H^{k+1}=0, \\[6pt]
\displaystyle
\rho^{k+1}_t+\text{div}(\rho^{k+1} u^k)=0,  \\[6pt]
\displaystyle
\rho^{k+1} u^{k+1}_t+\rho^{k+1} u^k\cdot\nabla u^k
  +\nabla P^{k+1}+L^{k+1}u=\mu_0\text{rot}H^{k+1}\times H^{k+1}, \\[6pt]
(H^{k+1},\rho^{k+1},u^{k+1})|_{t=0}=(H_0(x),\rho_0(x),u_0(x))\quad x\in \Omega, \\[6pt]
(H^{k+1},\rho^{k+1},u^{k+1},P^{k+1})\rightarrow (0,\overline{\rho},0,\overline{P}) \quad \text{as } \quad |x|\rightarrow \infty,\quad t> 0.
\end{cases}
\end{equation}
Then  from Lemma \ref{lembbn} that $(H^k,\rho^k,P^k, u^k)$ satisfies  (\ref{hym}).
Next, we show that $(H^k, \rho^k,P^{k}, u^k)$ converges to a limit $(H,\rho,P,u)$ in a strong sense. But this can be done by a slight modification of the arguments in \cite{jishan}. We omits its details. Then adapting the proof of Lemma \ref{lembbn}, we can easily show that $(H,\rho,P,u)$ is a solution to  (\ref{eq:1.2})-(\ref{fan1}). The proof for uniqueness and time-continuity  is also similar to those in \cite{CK}\cite{jishan} and so omitted.
\end{proof}

\begin{remark}\label{qita}
For the case $0 < \sigma < +\infty$, if we add $H|_{\partial \Omega}=0$ to  (\ref{eq:1.2})-(\ref{fan1}), then the similar existence result can be obtained via the similar argument used in this Section.
\end{remark}

\section{Blow-up criterion for classical solutions}

 Now we prove (\ref{eq:2.91}). Let $(H, \rho, u)$ be the unique classical solution to   IBVP (\ref{eq:1.2})--(\ref{fan1}). We assume that the opposite holds, i.e.,
\begin{equation}\label{we11*}
\begin{split}
\lim \sup_{T\mapsto \overline{T}} |D( u)|_{L^1([0,T]; L^\infty(\Omega))}=C_0<\infty.
\end{split}
\end{equation}
Due to $P=A\rho^\gamma$, we quickly know that $P$  satisfies 
\begin{equation}\label{mou9}
P_t+u \nabla P+\gamma P \text{div}u=0, \quad P_0 \in H^2 \cap W^{2,q}.
\end{equation}

We first give  the standard energy estimate that 

  \begin{lemma}\label{s2}
\begin{equation*}
\begin{split}
\big(|\sqrt{\rho}u(t)|^2_{ 2}+|H|^2_2+|P|_1\big)+\int_{0}^{T}|\nabla u(t)|^2_{2}\text{d}t\leq C,\quad 0\leq t<  T,
\end{split}
\end{equation*}
where $C$ only depends on $C_0$ and $T$ $(any\  T\in (0,\overline{T}])$.
 \end{lemma}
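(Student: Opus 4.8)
The plan is to run the basic energy method on the full coupled system, exploiting the exact cancellation between the work of the Lorentz force and the transport of magnetic energy. First I would multiply the momentum equation $(\ref{eq:1.2})_4$ by $u$ and integrate over $\Omega$. Using the continuity equation $(\ref{eq:1.2})_3$ one rewrites $\int_\Omega[(\rho u)_t+\text{div}(\rho u\otimes u)]\cdot u\,\dif x=\frac12\frac{d}{dt}|\sqrt{\rho}u|^2_2$; integrating the stress term by parts (legitimate since $u|_{\partial\Omega}=0$) and using the pointwise identity $2\int_\Omega|D(u)|^2\dif x=\int_\Omega|\nabla u|^2\dif x+\int_\Omega(\text{div}u)^2\dif x$ turns $\int_\Omega\text{div}\mathbb{T}\cdot u\,\dif x$ into $-\mu|\nabla u|^2_2-(\lambda+\mu)|\text{div}u|^2_2$, where $\mu+\lambda\geq\frac13\mu>0$ by (\ref{eq:1.5}). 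This produces the kinetic balance
\begin{equation*}
\frac12\frac{d}{dt}|\sqrt{\rho}u|^2_2+\mu|\nabla u|^2_2+(\lambda+\mu)|\text{div}u|^2_2=\int_\Omega P\,\text{div}u\,\dif x+\mu_0\int_\Omega(\text{rot}H\times H)\cdot u\,\dif x.
\end{equation*}

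Next I would multiply the induction equation, written via (\ref{zhoumou}) as $H_t=(H\cdot\nabla)u-(u\cdot\nabla)H-H\,\text{div}u$, by $\mu_0H$ and integrate; after handling $-(u\cdot\nabla)H\cdot H$ by parts this gives $\frac{\mu_0}{2}\frac{d}{dt}|H|^2_2=\mu_0\int_\Omega(H\cdot\nabla u)\cdot H\,\dif x-\frac{\mu_0}{2}\int_\Omega(\text{div}u)|H|^2\dif x$. Adding this to the kinetic balance, the crucial step is that all magnetic coupling drops out: expanding the Lorentz work by $\text{rot}H\times H=H\cdot\nabla H-\frac12\nabla|H|^2$ from (\ref{zhoumou}), the two $\frac{\mu_0}{2}\int_\Omega(\text{div}u)|H|^2\dif x$ contributions annihilate, while $\mu_0\int_\Omega(H\cdot\nabla H)\cdot u\,\dif x+\mu_0\int_\Omega(H\cdot\nabla u)\cdot H\,\dif x=\mu_0\int_\Omega H_j\partial_j(u_iH_i)\,\dif x=-\mu_0\int_\Omega(\text{div}H)(u\cdot H)\,\dif x=0$ precisely because $\text{div}H=0$. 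One is therefore left with the single clean identity
\begin{equation*}
\frac12\frac{d}{dt}\big(|\sqrt{\rho}u|^2_2+\mu_0|H|^2_2\big)+\mu|\nabla u|^2_2+(\lambda+\mu)|\text{div}u|^2_2=\int_\Omega P\,\text{div}u\,\dif x.
\end{equation*}

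It then remains to control the pressure work. Since $\Omega$ is bounded and $P=A\rho^\gamma\geq0$, the quantity $|P|_1=\int_\Omega P\,\dif x$ is finite; integrating the transport equation (\ref{mou9}) over $\Omega$ gives $\frac{d}{dt}|P|_1=(1-\gamma)\int_\Omega P\,\text{div}u\,\dif x$, so $\frac{d}{dt}|P|_1\leq(\gamma-1)|\text{div}u|_\infty|P|_1\leq C|D(u)|_\infty|P|_1$, and Gronwall yields $|P(t)|_1\leq|P_0|_1\exp\big(C\int_0^t|D(u)|_\infty\dif s\big)\leq|P_0|_1e^{CC_0}$ by the standing assumption (\ref{we11*}). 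Feeding this back, $\int_\Omega P\,\text{div}u\,\dif x\leq|P|_1|\text{div}u|_\infty\leq C|D(u)|_\infty$, and integrating the energy identity in time bounds $|\sqrt{\rho}u(t)|^2_2+|H(t)|^2_2+\int_0^T|\nabla u|^2_2\,\dif t$ by the initial energy plus $C\int_0^T|D(u)|_\infty\,\dif t\leq C(C_0,T)$. Combined with the bound on $|P|_1$ this is exactly the claim. The only delicate point is the exact cancellation of the Lorentz and induction terms in the combined balance, which rests entirely on the constraint $\text{div}H=0$; the remaining manipulations are routine integrations by parts, justified because $\Omega$ is bounded, $u$ vanishes on $\partial\Omega$, and $(H,\rho,u)$ is a classical solution.
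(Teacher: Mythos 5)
Your proof is correct, and its core — the energy method with the exact cancellation of the Lorentz work against the magnetic transport terms via $\text{div}H=0$ — is the same mechanism the paper uses in (\ref{2}), where the cancellation is packaged as the single identity (\ref{zhu1}), $\int_{\Omega}\text{rot}H\times H\cdot u\,\text{d}x=-\int_{\Omega}\text{rot}(u\times H)\cdot H\,\text{d}x$, rather than expanded componentwise through (\ref{zhoumou}) as you do; the two bookkeepings are equivalent. The one genuine difference is your treatment of the pressure. The paper absorbs the internal energy $\int_{\Omega}\frac{P}{\gamma-1}\,\text{d}x$ into the energy functional: by (\ref{mou9}), $\frac{d}{dt}\int_{\Omega}\frac{P}{\gamma-1}\,\text{d}x=-\int_{\Omega}P\,\text{div}u\,\text{d}x$, so the pressure work cancels exactly and one obtains the unconditional conservation law (\ref{2}), with no recourse to the blow-up assumption (\ref{we11*}) and with $|P|_1$ controlled as part of the conserved energy (using $\gamma>1$ and $P\geq 0$). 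You instead leave $\int_{\Omega}P\,\text{div}u\,\text{d}x$ as a forcing term, bound $|P(t)|_1$ by Gronwall from $\frac{d}{dt}|P|_1=(1-\gamma)\int_{\Omega}P\,\text{div}u\,\text{d}x$, and then invoke (\ref{we11*}) to integrate the forcing in time. This is perfectly valid here — the lemma sits inside the contradiction argument where (\ref{we11*}) is a standing hypothesis, and your constant depends only on $C_0$, $T$ and the data, as claimed — but it is strictly weaker information: the paper's version is an identity valid for any classical solution, which is the more natural and more robust statement (and is what one would want if, say, the a priori assumption were later relaxed). All your individual steps (the rewriting of the inertia terms via the continuity equation, the identity $2\int_{\Omega}|D(u)|^2\,\text{d}x=\int_{\Omega}|\nabla u|^2\,\text{d}x+\int_{\Omega}(\text{div}u)^2\,\text{d}x$ for $u|_{\partial\Omega}=0$, the positivity $\lambda+\mu\geq\frac{1}{3}\mu$, and the boundedness of $\Omega$ needed for $|P|_1<\infty$) check out.
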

\begin{proof} We first show that
\begin{align}
\label{2}\frac{d}{dt}\int_{\Omega} \big(\frac{1}{2}\rho |u|^2+\frac{P}{\gamma-1}+\frac{1}{2}H^2\big) \text{d}x+\int_{\Omega} \big(\mu |\nabla u|^2+(\lambda+\mu)(\text{div}u)^2\big)\text{d}x=0.
\end{align}

Actually,  (\ref{2})  is classical, which can be shown by multiplying   $(\ref{eq:1.2})_4$ by $u$,  $(\ref{eq:1.2})_3$ by $\frac{|u|^2}{2}$  and  $(\ref{eq:1.2})_1$ by $H$, then summing them together and integrating the result equation over $\Omega$ by parts, where we have used the fact
\begin{equation}\label{zhu1}
\begin{split}
\int_{\Omega} \text{rot}H \times H \cdot u \text{d}x=\int_{\Omega} -\text{rot}(u \times H)  \cdot H        \text{d}x.
\end{split}
\end{equation}
\end{proof}
 Let $f=(f^1,f^2,f^3)^\top\in \mathbb{R}^3$ and $g=(g^1,g^2,g^3)^\top\in \mathbb{R}^3$, we denote $(f\otimes g)_{ij}=(f_ig_j)$.
Next we need to show some lower order estimate for our classical solution $(H, \rho, u)$, which is the same as the regularity that the strong solution obtained in \cite{jishan} has to satisfy.
\subsection{Lower order estimate}\ \\

By assumption (\ref{we11*}), we first show that  both  $H$ and $\rho$ are both uniform bounded.
 \begin{lemma}\label{s1}
\begin{equation*}
\begin{split}
(|\rho(t)|_{\infty}+|H(t)|_{\infty}\big)\leq C, \quad 0\leq t< T,
\end{split}
\end{equation*}
where $C$ only depends on $C_0$ and $T$ $(any\  T\in (0,\overline{T}])$.
 \end{lemma}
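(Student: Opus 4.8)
The plan is to propagate the $L^\infty$ bounds along the particle trajectories of the flow and to exploit that every quantity I need to control is contracted against a symmetric tensor, so that the full velocity gradient $\nabla u$ may everywhere be replaced by its symmetric part $D(u)$. Concretely, I would introduce the flow map $X(t;x)$ solving $\frac{\dif}{\dif t}X(t;x)=u(t,X(t;x))$ with $X(0;x)=x$; since $(H,\rho,u)$ is a classical solution on $[0,\overline T)$, the field $u$ is Lipschitz in $x$ on each $[0,T]$ and this flow is well defined. The elementary but decisive observation is that $\text{div}\,u=\mathrm{tr}\,D(u)$, so that $|\text{div}\,u(t)|_\infty\le C|D(u)(t)|_\infty$, and hence by the assumption (\ref{we11*})
\begin{equation*}
\int_0^t|\text{div}\,u(s)|_\infty\,\dif s\le C\int_0^t|D(u)(s)|_\infty\,\dif s\le CC_0
\end{equation*}
uniformly for $0\le t<T$.

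For the density I would rewrite the continuity equation $(\ref{eq:1.2})_3$ as $\frac{\dif}{\dif t}\rho(t,X(t;x))=-\rho\,\text{div}\,u$ and integrate to obtain the explicit representation $\rho(t,X(t;x))=\rho_0(x)\exp\big(-\int_0^t\text{div}\,u(s,X(s;x))\,\dif s\big)$. Taking absolute values and inserting the bound above yields $|\rho(t)|_\infty\le|\rho_0|_\infty\exp(CC_0)\le C$. The same argument applied to the pressure equation (\ref{mou9}), which reads $\frac{\dif}{\dif t}P(t,X(t;x))=-\gamma P\,\text{div}\,u$ along trajectories, gives $|P(t)|_\infty\le|P_0|_\infty\exp(\gamma CC_0)$ as well, which will be convenient later.

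For the magnetic field I would first rewrite $(\ref{eq:1.2})_1$ via the first identity in (\ref{zhoumou}) as the transport--stretching equation $H_t+u\cdot\nabla H=H\cdot\nabla u-H\,\text{div}\,u$, and then compute $\frac{\dif}{\dif t}|H|^2$ along $X(t;x)$, obtaining $\frac{\dif}{\dif t}|H|^2=2\,H^\top(\nabla u)H-2|H|^2\,\text{div}\,u$. Here is the subtle point promised in the abstract: the stretching term $H^\top(\nabla u)H=\sum_{i,j}H_iH_j\partial_ju_i$ is contracted against the symmetric tensor $H\otimes H$, so the antisymmetric (vorticity) part of $\nabla u$ drops out and $H^\top(\nabla u)H=\sum_{i,j}H_iH_jD(u)_{ij}=H^\top D(u)H$. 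Consequently $\frac{\dif}{\dif t}|H|^2\le C|D(u)|_\infty|H|^2$ along each trajectory, and Gronwall together with the time-integrated bound $\int_0^t|D(u)|_\infty\,\dif s\le C_0$ gives $|H(t,X(t;x))|^2\le|H_0|^2_\infty\exp(CC_0)$, whence $|H(t)|_\infty\le C$.

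The main obstacle is precisely this last reduction. Because the magnetic-stretching term $H\cdot\nabla u$ carries the \emph{full} gradient $\nabla u$ — in particular the vorticity, which $D(u)$ does not control — a crude estimate would force a criterion in terms of $\nabla u$, as in (\ref{eq:2.911}), rather than the sharper $D(u)$. Recognizing that the quadratic contraction with $H$ sees only the symmetric part $D(u)$ is what keeps the criterion sharp; everything else is Gronwall along characteristics. The remaining points to verify are routine: that the flow $X(t;x)$ is genuinely well defined and that the representation formulas hold classically, both immediate from the regularity (\ref{regs}) on $[0,T]$ with $T<\overline T$.
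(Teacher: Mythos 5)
Your proof is correct, and it isolates exactly the same key point as the paper: the stretching term $H\cdot\nabla u$ enters only through the quadratic contraction $\sum_{i,j}H_iH_j\partial_iu_j$, whose antisymmetric part vanishes, so only $D(u)$ survives; together with $\mathrm{div}\,u=\mathrm{tr}\,D(u)$ this closes a Gronwall argument under (\ref{we11*}). Where you differ is in the mechanism for extracting the $L^\infty$ bound. You integrate pointwise along the particle trajectories $X(t;x)$ and use the explicit exponential representation for $\rho$ and the ODE for $|H|^2$ along characteristics. The paper instead multiplies $(\ref{eq:1.2})_1$ by $q|H|^{q-2}H$, integrates by parts (the transport term $u\cdot\nabla H$ contributes only a $\mathrm{div}\,u$ factor), obtains $\frac{d}{dt}|H|_q\le\frac{2q+1}{q}|D(u)|_\infty|H|_q$ with a constant uniform in $q$, and lets $q\to\infty$; the same is done for $\rho$. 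The two routes are equivalent in substance, but the $L^q\to L^\infty$ version buys a little robustness: it never invokes the flow map, so it applies verbatim to the strong solutions of \cite{jishan} (for which the theorem also claims the criterion), whereas your characteristics argument needs $\nabla u\in L^1([0,T];L^\infty)$ to define $X(t;x)$ uniquely --- true here, since $|\nabla u(t)|_\infty\lesssim 1+t^{-1/2}$ by (\ref{regs}) and Lemma \ref{gag}, but worth stating, as your assertion that $u$ is Lipschitz on each $[0,T]$ is slightly imprecise at $t=0$. Your side remarks on $P$ are harmless and consistent with (\ref{mou9}).
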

 \begin{proof}

Multiplying $(\ref{eq:1.2})_1$ by $q |H|^{q-2} H$ and integrating  over $\Omega$ by parts, then we have
\begin{equation}\label{zhumw2}
\begin{split}
 \frac{d}{dt}|H|^q_q=& q \int_{\Omega} \big(H\cdot \nabla u-u\cdot \nabla H-H \text{div}u\big) \cdot H|H|^{q-2} \text{d}x\\
=&q \int_{\Omega} \big(H\cdot D( u)-u\cdot \nabla H-H \text{div}u\big) \cdot H|H|^{q-2} \text{d}x.
\end{split}
\end{equation}
By integrating by parts, the second term on the right-hand side can be written as
\begin{equation}\label{zhumw1}
\begin{split}
-q \int_{\Omega} \big(u\cdot \nabla H\big)\cdot H|H|^{q-2} \text{d}x=\int_{\Omega} \text{div}u |H|^q \text{d}x,
\end{split}
\end{equation}
which, together with (\ref{zhumw2}), immediately yields
\begin{equation}\label{zhumw3}
\begin{split}
& \frac{d}{dt}|H|^q_q\leq (2q+1) \int_{\Omega} |D( u) | |H|^{q} \text{d}x\leq (2q+1) |D( u)|_{\infty} |H|^q_q,
\end{split}
\end{equation}
which means that 
\begin{equation}\label{mou10}
 \frac{d}{dt}|H|_q\leq \frac{ (2q+1)}{q} |D( u)|_{\infty} |H|_q,
\end{equation}
hence, it follows from (\ref{we11*})  and (\ref{mou10}) that 
\begin{equation*}
\begin{split}
\sup_{0\leq t \leq T}|H|_{q}\leq C, \quad 0\leq T< \overline{T},
\end{split}
\end{equation*}
where $C>0$ is independent of $q$. Therefore, letting $q\rightarrow \infty$ in the above inequality leads to the desired estimate of $|H|_{\infty}$. In the same way, we also obtains the bound of $|\rho|_{\infty}$ which indeed depends only on $\|\text{div}u\|_{L^1([0,T];L^{\infty}(\Omega))}$.

 \end{proof}

The next lemma will give a key estimate on $\nabla H$, $\nabla \rho$ and $\nabla u$. 
  \begin{lemma}\label{s4}
\begin{equation*}
\begin{split}
\sup_{0\leq t\leq T}\big(|\nabla u|^2_{ 2}+|\nabla \rho|^2_{ 2}+|\nabla H|^2_2\big)+\int_0^T |\nabla^2 u|^2_2\text{d}t\leq C,\quad 0\leq T<  \overline{T},
\end{split}
\end{equation*}
where $C$ only depends on $C_0$ and $T$.
 \end{lemma}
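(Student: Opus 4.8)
The plan is to derive a single Gronwall inequality for the quantity $\Phi(t)=|\nabla u|_2^2+|\nabla\rho|_2^2+|\nabla H|_2^2$ whose integrating factor is $1+|D(u)|_\infty$, which lies in $L^1(0,T)$ by assumption (\ref{we11*}). The three gradients are genuinely coupled, so I would estimate them simultaneously and close the loop through the elliptic bound for $\nabla^2u$. For the velocity I would test the momentum equation $(\ref{eq:1.2})_4$ not with $u_t$ but with the material derivative $\dot u=u_t+u\cdot\nabla u$. The viscous terms then produce $\frac12\frac{d}{dt}\big(\mu|\nabla u|_2^2+(\lambda+\mu)|\text{div}\,u|_2^2\big)$ together with cubic remainders of the form $\int\triangle u\cdot(u\cdot\nabla u)$ and $\int\nabla\text{div}\,u\cdot(u\cdot\nabla u)$. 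The essential observation, which is exactly why $D(u)$ rather than the full $\nabla u$ controls the flow, is that after integration by parts these remainders contract $\nabla u$ against the \emph{symmetric} tensors $(\nabla u)(\nabla u)^\top$ and $\text{div}\,u\,I_3$; hence only the symmetric part $D(u)$ survives and they are bounded by $C|D(u)|_\infty|\nabla u|_2^2$. The pressure and Lorentz forcing I would move into time-total derivatives, using $-\int\nabla P\cdot u_t=\frac{d}{dt}\int P\,\text{div}\,u-\int P_t\,\text{div}\,u$ and $\mu_0\int(\text{rot}H\times H)\cdot u_t=-\mu_0\int(H\otimes H-\frac12|H|^2I_3):\nabla u_t$, absorbing the total-$\frac{d}{dt}$ pieces into a modified energy $\mathcal E$ that is comparable to $|\nabla u|_2^2$ thanks to Lemma \ref{s1} and Lemma \ref{s2}. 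This would yield, for small $\varepsilon$, an inequality of the shape $\frac{d}{dt}\mathcal E+\frac12\int\rho|\dot u|^2\le \varepsilon|\nabla^2u|_2^2+C(1+|D(u)|_\infty)(1+\Phi)$.

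Next, for the transported fields I would differentiate $(\ref{eq:1.2})_3$ and $(\ref{eq:1.2})_1$ (using the identity (\ref{zhoumou}) to rewrite the magnetic equation), test with $\nabla\rho$ and $\nabla H$ respectively, and integrate. Again the dangerous first-order terms $\int(\partial_k u\cdot\nabla\rho)\partial_k\rho$ and $\int(\partial_k u\cdot\nabla H)\cdot\partial_k H$ pair $\nabla u$ with the symmetric tensors $\nabla\rho\otimes\nabla\rho$ and $(\nabla H)(\nabla H)^\top$, so only $D(u)$ appears; the remaining terms carry one factor of $\nabla^2u$ and give $\frac{d}{dt}(|\nabla\rho|_2^2+|\nabla H|_2^2)\le C|D(u)|_\infty(|\nabla\rho|_2^2+|\nabla H|_2^2)+\varepsilon|\nabla^2u|_2^2+C\Phi$, where I use $|\nabla P|_2\le C|\nabla\rho|_2$ (since $P=A\rho^\gamma$) and $|H|_\infty\le C$ from Lemma \ref{s1}. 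The loop is then closed by the Lam\'e elliptic estimate of Lemma \ref{zhenok} applied to $Lu=-\rho\dot u-\nabla P+\mu_0\text{rot}H\times H$, which gives $|\nabla^2u|_2^2\le C\big(\int\rho|\dot u|^2+\Phi\big)$; choosing $\varepsilon$ small then lets me absorb every $\varepsilon|\nabla^2u|_2^2$ into $\int\rho|\dot u|^2$ and $\mathcal E$. Summing the three inequalities produces $\frac{d}{dt}(\mathcal E+|\nabla\rho|_2^2+|\nabla H|_2^2)\le C(1+|D(u)|_\infty)(1+\Phi)$, and Gronwall together with $\int_0^T(1+|D(u)|_\infty)\,\text{d}t<\infty$ finishes both the pointwise bound on $\Phi$ and, after one further integration, the bound on $\int_0^T|\nabla^2u|_2^2$.

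I expect the main obstacle to be the Lorentz force. Since there is no diffusion in the magnetic equation one cannot control $\nabla^2H$, so when the forcing is put in total-derivative form the remainder $\int(H\otimes H-\frac12|H|^2I_3)_t:\nabla u$ has to be treated by substituting $H_t=H\cdot\nabla u-u\cdot\nabla H-H\,\text{div}\,u$ from the magnetic equation and estimating the resulting terms of type $\int|u|\,|\nabla H|\,|\nabla u|$ by interpolating $|\nabla u|_3\le C|\nabla u|_2^{1/2}|\nabla^2u|_2^{1/2}$, thereby trading the missing regularity of $H$ for an $\varepsilon|\nabla^2u|_2^2$ that the elliptic estimate can absorb; this is the "subtle estimate for the magnetic field" advertised in the introduction. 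The truly delicate bookkeeping is to keep every remainder \emph{linear} in $\Phi$ with an $L^1$-in-time coefficient, rather than superlinear terms such as $|\nabla u|_2^6$: it is precisely the symmetric-contraction structure above (and the use of $\dot u$, not $u_t$) that avoids such powers, for a superlinear Gronwall estimate would only give short-time control and could not be continued up to the maximal time $\overline T$.
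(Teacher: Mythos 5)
Your proposal is correct and follows essentially the same route as the paper: the paper's multiplier $\rho^{-1}\big(-Lu-\nabla P-\nabla|H|^2+H\cdot\nabla H\big)$ is precisely the material derivative $\dot u$, and the paper likewise exploits the symmetric contractions (e.g. $(\nabla\rho)^\top\nabla u\,\nabla\rho=(\nabla\rho)^\top D(u)\nabla\rho$ and the $\mathrm{rot}\,u\cdot D(u)\cdot\mathrm{rot}\,u$ structure in $L_1$, $L_2$), moves the pressure and Lorentz terms into total time derivatives, substitutes $H_t$ from the induction equation, and closes with the Lam\'e elliptic estimate and Gronwall. The only cosmetic difference is that the paper tolerates a right-hand side of the form $C(|\nabla u|_2^2+|\nabla H|_2^2+1)(|\nabla u|_2^2+|D(u)|_\infty+1)$, which is still admissible for Gronwall since the second factor is integrable in time by Lemma \ref{s2} and (\ref{we11*}); your insistence on strict linearity in $\Phi$ is therefore slightly stronger than necessary but not wrong.
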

\begin{proof}
%
Firstly, multiplying $(\ref{eq:1.2})_4$ by $\rho^{-1}\big(-Lu-\nabla P-\nabla |H|^2+H\cdot \nabla H\big) $ and integrating the result equation over $\Omega$, then we have
\begin{equation}\label{zhu6}
\begin{split}
&\frac{1}{2} \frac{d}{dt}\Big(\frac{\mu}{2}|\nabla u|^2_2+\frac{\mu+\lambda}{2}|\text{div}u|^2_2\Big)+\int_{\Omega}\rho^{-1}\big(-Lu-\nabla P-\nabla |H|^2+H\cdot \nabla H\big)^2\text{d}x\\
=&-\mu\int_{\Omega} (u\cdot \nabla u) \cdot \nabla \times (\text{rot}u)\text{d}x+(2\mu+\lambda)\int_{\Omega} (u\cdot \nabla u) \cdot  \nabla \text{div}u\text{d}x\\
&-\int_{\Omega} (u\cdot \nabla u) \cdot \nabla P(\rho) \text{d}x-\int_{\Omega} (u \cdot \nabla u) \big(\frac{1}{2} \nabla |H|^2-H\cdot \nabla H\big)\text{d}x\\
&-\int_{\Omega} u_t \cdot \nabla P(\rho) \text{d}x-\int_{\Omega} u_t\cdot  \big(\frac{1}{2} \nabla |H|^2-H\cdot \nabla H\big)\text{d}x\equiv: \sum_{i=1}^{6} L_i,
\end{split}
\end{equation}
where we have used the fact that $\triangle u=\nabla\text{div}u-\nabla\times \text{rot}u$.

We now estimate each term in (\ref{zhu6}). Due to the fact that $\rho^{-1}\geq C^{-1} >0$, we find the second term on the left hand side of (\ref{zhu6}) admits
\begin{equation}\label{gaibian}
\begin{split}
&\int_{\Omega}\rho^{-1}\big|Lu+\nabla P+\nabla |H|^2-H  \cdot \nabla H\big|^2\text{d}x\\
\geq& C^{-1}|Lu|^2_2-C(|\nabla P|^2_2+|\nabla u|^2_2+|H|^2_{\infty}|\nabla H|^2_2)\\
\geq& C^{-1}|u|^2_{D^2}-C(|\nabla \rho|^2_2+|\nabla u|^2_2+|\nabla H|^2_2),
\end{split}
\end{equation}
where we have used the standard $L^2$- theory of elliptic system and Lemma \ref{s1}. Note that $L$ is a strong elliptic operator. Next according to
\begin{equation*}
\begin{cases}
u\times \text{rot}u=\frac{1}{2}\nabla (| u|^2)-u \cdot \nabla u,\\[8pt]
\nabla\times(a\times b)=(b\cdot \nabla)a-(a \cdot \nabla)b+(\text{div}b)a-(\text{div}a)b,
\end{cases}
\end{equation*}
and Holder's inequality, Gagliardo-Nirenberg inequality and Young's inequality,
 we  deduce
\begin{equation}\label{zhu10cvcv}
\begin{split}
|L_1|=&\mu\Big|\int_{\Omega} (u \cdot \nabla u) \cdot \nabla \times (\text{rot} u)\text{d}x\Big|
=\mu\Big| \int_{\Omega} \nabla \times (u\cdot \nabla u) \cdot \text{rot} u\text{d}x\Big|\\
=&\mu\Big| \int_{\Omega} \nabla \times (u\times \text{rot}u) \cdot \text{rot} u\text{d}x\Big|\\
=&\mu\Big| \frac{1}{2}\int_{\Omega} (\text{rot} u)^2\text{div}u\text{d}x-\int_{\Omega} \text{rot} u\cdot D(u)\cdot \text{rot} u \text{d}x\Big|
\leq C|D(u)|_\infty|\nabla u|^2_2,
\end{split}
\end{equation}
\begin{equation}\label{zhu12aasd}
\begin{split}
|L_2|=&(2\mu+\lambda)\Big|\int_{\Omega}(u\cdot \nabla u) \cdot  \nabla \text{div}u\text{d}x\Big|\\
=&(2\mu+\lambda)\Big|-\int_{\Omega}\nabla u: (\nabla u)^\top \text{div}u\text{d}x+\frac{1}{2}\int_{\Omega} (\text{div}u)^3\text{d}x\Big|\\
\leq& C|D(u)|_\infty|\nabla u|^2_2,\\
L_3=&-\int_{\Omega} (u\cdot \nabla u) \cdot \nabla P \text{d}x\leq C|\nabla u|_2 |\nabla u|_{3}|\nabla P|_2\\
\leq& C(\epsilon)(|\nabla \rho|^2_2+1)|\nabla u|^2_2+\epsilon | u|^2_{D^2},\\
L_{4}=&-\int_{\Omega}  (u \cdot \nabla u) \big(\frac{1}{2} \nabla |H|^2-H\cdot \nabla H\big) \text{d}x\leq C|\nabla H|_2|H|_{\infty}|\nabla u|_3|u|_6\\
\leq& C(\epsilon)|H|^2_{\infty}|\nabla H|^2_2|\nabla u|^2_2+\epsilon \|\nabla u\|^2_1\leq C(\epsilon)(|\nabla H|^2_2+1)|\nabla u|^2_2+\epsilon | u|^2_{D^2},\\
L_{5}=&-\int_{\Omega} u_t \cdot \nabla P \text{d}x=\frac{d}{dt} \int_{\Omega}  P \text{div}u \text{d}x-\int_{\Omega}  P_t \text{div}u \text{d}x\\
=&\frac{d}{dt} \int_{\Omega}  P \text{div}u \text{d}x+\int_{\Omega} \big( u \cdot \nabla P \text{div}u +\gamma P (\text{div}u)^2\big) \text{d}x\\
\leq &\frac{d}{dt} \int_{\Omega}  P \text{div}u \text{d}x+C|\nabla P|_2 |u|_6 |\nabla u|_3+C|P|_\infty |\nabla u|^2_2\\
=&\frac{d}{dt} \int_{\Omega}  P \text{div}u \text{d}x+C(\epsilon)|\nabla u|^2_2(1+|\nabla \rho|^2_2)+\epsilon | u|^2_{D^2},\\
L_{6}=&-\int_{\Omega}  u_t \cdot \big(\frac{1}{2} \nabla |H|^2-H\cdot \nabla H\big)\text{d}x\\
=& \frac{1}{2}\frac{d}{dt}\int_{\Omega} |H|^2  \text{div}u \text{d}x-\frac{d}{dt}\int_{\Omega} H \cdot \nabla u \cdot H \text{d}x\\
&-\int_{\Omega}  \text{div}u  H \cdot H_t \text{d}x+\int_{\Omega} H_t \cdot \nabla u \cdot H \text{d}x+\int_{\Omega} H\cdot \nabla u \cdot H_t \text{d}x.
\end{split}
\end{equation}
where  we have used the fact $\text{div}H=0$ and $\epsilon> 0$ is a sufficiently small constant.  To deal with the last three terms on the right-hand side of $L_6$, we need to use 
$$
H_t=H \cdot \nabla u-u \cdot \nabla H-H\text{div}u.$$
Hence, similar to the proof of the above estimates for $L_i$, we also have
\begin{equation}\label{zhu12vvvv}
\begin{split}
\quad \quad \int_{\Omega}   H_t \cdot \nabla u \cdot H \text{d}x=&\int_{\Omega} - \text{div}u  H \cdot \big(H \cdot \nabla u-u \cdot \nabla H-H\text{div}u\big) \text{d}x\\
\leq & C|H|^2_\infty |\nabla u|^2_2+|D(u)|_{\infty} |\nabla H|_2 |u|_6 |H|_3\\
\leq & C(|D(u)|_{\infty}+1)(|\nabla u|^2_2+|\nabla H|^2_2),\\
\int_{\Omega} H_t \cdot \nabla u \cdot H \text{d}x+&\int_{\Omega} H\cdot \nabla u \cdot H_t \text{d}x\\
\leq &\int_{\Omega} |(H \cdot \nabla u-u \cdot \nabla H-H\text{div}u\big) \cdot \nabla u \cdot H| \text{d}x\\
\leq & C|H|^2_\infty |\nabla u|^2_2+|u|_{\infty} |\nabla u|_2 |\nabla H|_2 |H|_\infty\\
\leq & C(\epsilon)(|\nabla H|_{2}+1)|\nabla u|^2_2+\epsilon |u|^2_{D^2}.
\end{split}
\end{equation}

Then combining (\ref{zhu6})-(\ref{zhu12vvvv}),  we have
\begin{equation}\label{zhu6qss}
\begin{split}
&\frac{1}{2} \frac{d}{dt}\int_{\Omega}(\mu|\nabla u|^2+(\mu+\lambda)|\text{div}u|^2-(P+\frac{1}{2}|H|^2) \text{div}u H\cdot \nabla u\cdot H\Big)\text{d}x+C|\nabla^2 u|^2_2\\
\leq &C(|\nabla u|^2_2+|\nabla H|^2_2+1)(|\nabla u|^2_2+|D(u)|_\infty+1).
\end{split}
\end{equation}

Secondly, applying $\nabla$ to  $(\ref{eq:1.2})_3$ and multiplying the result equation by $2\nabla \rho$,  we have
\begin{equation}\label{zhu20}
\begin{split}
&(|\nabla \rho|^2)_t+\text{div}(|\nabla \rho|^2u)+|\nabla \rho|^2\text{div}u\\
=&-2 (\nabla \rho)^\top \nabla u \nabla \rho-2 \rho \nabla \rho \cdot \nabla \text{div}u\\
=&-2 (\nabla \rho)^\top D(u) \nabla \rho-2 \rho \nabla \rho \cdot \nabla \text{div}u.
\end{split}
\end{equation}
Then integrating (\ref{zhu20}) over $\Omega$, we have
\begin{equation}\label{zhu21}
\begin{split}
\frac{d}{dt}|\nabla \rho|^2_2
\leq& C(|D( u)|_\infty+1)|\nabla \rho|^2_2+\epsilon |\nabla^2 u|^2_2.
\end{split}
\end{equation}

Thirdly, applying $\nabla $  to $(\ref{eq:1.2})_1$, due to 
\begin{equation}\label{mou6}
\begin{split}
A=\nabla (H\cdot \nabla u)=&(\partial_j H \cdot \nabla u^i)_{(ij)}+(H\cdot \nabla \partial_j u^i)_{(ij)},\\
B=\nabla (u \cdot \nabla H)=&(\partial_j u\cdot \nabla H^i)_{(ij)}+ (u\cdot \nabla \partial_j H^i)_{(ij)},\\
C=\nabla(H \text{div}u)=&\nabla H \text{div}u+H \otimes \nabla \text{div}u,
\end{split}
\end{equation}
then  multiplying the result equation  $\nabla (\ref{eq:1.2})_1$ by $2\nabla H$, we have
\begin{equation}\label{zhu20q}
\begin{split}
&(|\nabla H|^2)_t-2A:\nabla H+2 B \nabla H-2C : \nabla H=0.
\end{split}
\end{equation}

Then integrating (\ref{zhu20q}) over $\Omega$, due to
\begin{equation}\label{zhu20qq}
\begin{split}
&\int_{\Omega}  A: \nabla H \text{dx}\\
=&\int_{\Omega}  \sum_{j=1}^3\Big(\sum_{i=1}^3 \sum_{k=1}^3  \partial_j H^k \partial_k u^i \partial_j H^i\Big) \text{dx}+\int_{\Omega}  \sum_{j=1}^3\sum_{i=1}^3 \sum_{k=1}^3  H^k \partial_{kj} u^i \partial_j H^i \text{dx}\\
=&\int_{\Omega}  \sum_{j=1}^3\Big(\sum_{i,k} \partial_j H^k \frac{(\partial_k u^i +\partial_i u^k)}{2}\partial_j H^i\Big) \text{dx}+\int_{\Omega}  \sum_{j=1}^3\sum_{i=1}^3 \sum_{k=1}^3  H^k \partial_{kj} u^i \partial_j H^i \text{dx}\\
\leq& C|D(u)|_\infty |\nabla H|^2_2+C|H|_\infty|\nabla H|_2 |u|_{D^2},\\
&\int_{\Omega}  B: \nabla H \text{dx}\\
=&\int_{\Omega}  \sum_{j=1}^3\sum_{i=1}^3 \sum_{k=1}^3  \partial_j u^k \partial_k H^i \partial_j H^i \text{dx}+\int_{\Omega}  \sum_{j=1}^3\sum_{i=1}^3 \sum_{k=1}^3  u^k  \partial_{kj} H^i \partial_j H^i \text{dx}\\
=&\int_{\Omega}  \sum_{i=1}^3\Big(\sum_{j,k}   \partial_k H^i \frac{(\partial_j u^k+\partial_k u^j)}{2} \partial_j H^i \Big)\text{dx}+\frac{1}{2}\int_{\Omega}  \sum_{i=1}^3\Big(\sum_{j,k}  u^k  \partial_{k} ( \partial_j H^i)^2\Big) \text{dx}\\
\leq& C|D(u)|_\infty |\nabla H|^2_2,
\end{split}
\end{equation}
\begin{equation}\label{zhu20qbbq}
\begin{split}
\int_{\Omega}  C: \nabla H \text{dx}
=&\int_{\Omega} \big(\text{div}u|\nabla H|^2+(H\otimes \nabla \text{div}u) :\nabla H\big) \text{dx}\\
\leq& C|D(u)|_\infty |\nabla H|^2_2+C|H|_\infty|\nabla H|_2 |u|_{D^2},
\end{split}
\end{equation}
 we quickly have the following estimate from (\ref{zhu20q})-(\ref{zhu20qbbq}):
\begin{equation}\label{zhu21q}
\begin{split}
\frac{d}{dt}|\nabla H|^2_2
\leq& C(|D( u)|_\infty+1)|\nabla H|^2_2+\epsilon |\nabla^2 u|^2_2.
\end{split}
\end{equation}

Adding (\ref{zhu21}) and (\ref{zhu21q}) to (\ref{zhu6qss}), from Gronwall's inequality we immediately obtain
\begin{equation*}
\begin{split}
|\nabla u(t)|^2_{ 2}+|\nabla \rho(t)|^2_{ 2}+|\nabla H(t)|^2_{ 2}+\int_0^t |\nabla^2 u(s)|^2_2\text{d}t\leq C,\quad 0\leq t< T.
\end{split}
\end{equation*}

 \end{proof}

Next, we proceed to improve the regularity of $\rho$, $H$ and $u$. To this end, we first drive some bounds on derivatives of $u$ based on estimates above.
Now we give the  estimates for the lower order terms of the velocity $u$.

 \begin{lemma}[\textbf{Lower order estimate of the velocity $u$}]\label{wlem:4-1}\ \\ 
\begin{equation*}
\begin{split}
|u(t)|^2_{  D^2}+|\sqrt{\rho} u_t(t)|^2_{2} + \int_{0}^{T} |u_t|^2_{D^1}\text{d}t\leq C, \quad 0\leq t \leq T,
\end{split}
\end{equation*}
where $C$ only depends on $C_0$ and $T$ $(any\  T\in (0,\overline{T}])$.

 \end{lemma}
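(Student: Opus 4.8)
The plan is to run two successive energy estimates on the momentum equation $(\ref{eq:1.2})_4$, exactly paralleling the two steps in the proof of Lemma \ref{lem:4-1}, but now feeding in the nonlinear a priori bounds already secured in Lemmas \ref{s1}, \ref{s2} and \ref{s4} (so that every constant depends only on $C_0$ and $T$), and then closing the $D^2$-bound through the elliptic regularity of Lemma \ref{zhenok}.

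First I would multiply $(\ref{eq:1.2})_4$ by $u_t$ and integrate over $\Omega$, producing
$$|\sqrt{\rho}u_t|_2^2+\tfrac12\tfrac{d}{dt}\int_\Omega\big(\mu|\nabla u|^2+(\lambda+\mu)(\text{div}u)^2\big)\,\text{d}x=\int_\Omega\big(-\nabla P-\rho u\cdot\nabla u+\mu_0\,\text{rot}H\times H\big)\cdot u_t\,\text{d}x.$$
The pressure term is integrated by parts and split into $\frac{d}{dt}\int_\Omega P\,\text{div}u$ plus a remainder controlled through the transport equation (\ref{mou9}); the convection term is bounded by $\tfrac14|\sqrt\rho u_t|_2^2+\epsilon|u|_{D^2}^2+C$ using $|\rho|_\infty\le C$, $|u|_6\le C|\nabla u|_2$ and $|\nabla u|_3\le C|\nabla u|_2^{1/2}|\nabla^2u|_2^{1/2}$; and the Lorentz force is written as $\text{div}(H\otimes H-\tfrac12|H|^2I_3)$ so that, after a time-derivative splitting, the leftover is estimated by substituting the magnetic equation for $H_t$ and integrating by parts in $x$ to avoid any norm of $H$ beyond $|\nabla H|_2$. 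Integrating in time and absorbing the $\epsilon|u|_{D^2}^2$ via Lemma \ref{zhenok} then yields $\int_0^t|\sqrt\rho u_t|_2^2\,\text{d}s\le C$ together with $|\nabla u(t)|_2^2\le C$.

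Next I would differentiate $(\ref{eq:1.2})_4$ in $t$ to obtain
$$\rho u_{tt}+Lu_t=-\nabla P_t-\rho_t u_t-(\rho u\cdot\nabla u)_t+\mu_0(\text{rot}H\times H)_t,$$
multiply by $u_t$ and integrate, which puts $\tfrac12\tfrac{d}{dt}|\sqrt\rho u_t|_2^2+\int_\Omega(\mu|\nabla u_t|^2+(\lambda+\mu)(\text{div}u_t)^2)\,\text{d}x$ on the left. Each term on the right is estimated in the form $\epsilon|\nabla u_t|_2^2+C(1+\Phi(s))|\sqrt\rho u_t|_2^2+C(1+\Phi(s))$, where $\Phi\in L^1(0,T)$ collects the time-integrable factors $|u|_\infty^2$, $|\nabla u|_2^2$ and the like (here $\rho_t=-\text{div}(\rho u)$, $P_t$ is read off from (\ref{mou9}), and $(\text{rot}H\times H)_t$ is again handled via $H_t=H\cdot\nabla u-u\cdot\nabla H-H\,\text{div}u$). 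The initial value is finite because, by the compatibility condition (\ref{th79}), $\sqrt{\rho}u_t|_{t=0}=-\sqrt{\rho_0}\,u_0\cdot\nabla u_0-g_1\in L^2$; Gronwall's inequality then delivers the pointwise bound $|\sqrt\rho u_t(t)|_2^2\le C$ and $\int_0^T|\nabla u_t|_2^2\,\text{d}t\le C$. Finally, applying Lemma \ref{zhenok} to $Lu=-\rho u_t-\rho u\cdot\nabla u-\nabla P+\text{rot}H\times H$ converts these into $|u(t)|_{D^2}^2\le C(|\sqrt\rho u_t(t)|_2^2+1)\le C$.

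The main obstacle I anticipate is the magnetic coupling in both energy estimates: since $(\ref{eq:1.2})_1$ provides no parabolic smoothing for $H$, one cannot afford any bound on $H$ beyond $|H|_\infty$ and $|\nabla H|_2$, so every occurrence of $H_t$ and of $(\text{rot}H\times H)_t$ must be rewritten through the magnetic equation and integrated by parts so that only these two quantities, plus absorbable $\epsilon|\nabla^2 u|_2^2$ or $\epsilon|\nabla u_t|_2^2$ terms, survive. A secondary nuisance is that $|u|_\infty$ is only $L^2$, not $L^\infty$, in time, so the Young-inequality splittings must be arranged to leave all residual factors time-integrable before invoking Gronwall.
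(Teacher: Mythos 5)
Your proposal is correct and follows essentially the same route as the paper: the paper likewise passes to the time-differentiated momentum equation, tests with $u_t$, handles the magnetic terms by rewriting $(\text{rot}H\times H)_t$ as $-\int(H\otimes H-\frac12|H|^2I_3)_t:\nabla u_t$ and substituting $H_t=H\cdot\nabla u-u\cdot\nabla H-H\,\text{div}u$, uses the compatibility condition for the initial value of $|\sqrt{\rho}u_t|_2$, applies Gronwall with the $L^1$-in-time coefficient $\|\nabla u\|_1^2+1$, and closes the $D^2$-bound by elliptic regularity. The only difference is that your preliminary energy estimate (multiplying the undifferentiated equation by $u_t$) is omitted in the paper, since Lemma \ref{s4} already supplies the needed bounds on $|\nabla u|_2$ and $\int_0^T|\nabla^2u|_2^2\,\text{d}t$.
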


 \begin{proof}

Via  $(\ref{eq:1.2})_4$ and Lemmas \ref{ok}, \ref{s2}-\ref{s4},  we show that 
\begin{equation}\label{mousun}
\begin{split}
|u|_{D^2}\leq C(|\sqrt{\rho}u_t|_2+1).
\end{split}
\end{equation}

 Differentiating $ (\ref{eq:1.2})_4$ with respect to $t$, we have
\begin{equation}\label{wgh78}
\begin{split}
\rho u_{tt}+Lu_t=-\rho_tu_t -\rho u\cdot\nabla u_t-\rho_t u\cdot\nabla u-\rho u_t\cdot\nabla u-\nabla P_t
 +(\text{rot}H\times H)_t.
\end{split}
\end{equation}
Multiplying (\ref{wgh78}) by $u_t$ and integrating  over $\Omega$, we have
\begin{equation}\label{wzhen4}
\begin{split}
&\frac{1}{2}\frac{d}{dt}\int_{\Omega}\rho |u_t|^2 \text{d}x+\int_{\Omega}(\mu|\nabla u_t|^2+(\lambda+\mu)(\text{div}u_t)^2) \text{d}x\\
=&  -\int_{\Omega} \rho u \cdot \nabla  |u_t|^2\text{d}x-\int_{\Omega} \rho u \nabla ( u \cdot \nabla u \cdot u_t)\text{d}x-\int_{\Omega} \rho u_t \cdot \nabla u \cdot u_t\text{d}x+\int_{\Omega} P_t \text{div} u_t\text{d}x\\
& +\int_{\Omega}H  \cdot H_t \text{div} u_t\text{d}x-\int_{\Omega}\big(H \cdot \nabla u_t \cdot H_t+H_t \nabla u_t \cdot H\big)\text{d}x
\equiv:\sum_{i=7}^{12}L_i,
\end{split}
\end{equation}
where we have used the fact $\text{div}H=0$.

According to Lemmas \ref{s2}-\ref{s4}, Holder's inequality, Gagliardo-Nirenberg inequality and Young's inequality,  we deduce that
\begin{equation}\label{wzhou6}
\begin{split}
L_7=&-\int_{\Omega} \rho u \cdot \nabla  |u_t|^2\text{d}x\leq  C|\rho|^{\frac{1}{2}}_{\infty}|u|_{\infty}|\sqrt{\rho} u_t|_{2}|\nabla u_t|_{2}\leq C\|\nabla u\|^2_1|\sqrt{\rho} u_t|^2_{2}+\epsilon |\nabla  u_t|^2_2, \\
L_8=& -\int_{\Omega} \rho u \nabla ( u \cdot \nabla u \cdot u_t)\text{d}x
 \leq C\int_{\Omega} \big(| u|  |\nabla u|^2 |u_t|+| u|^2  |\nabla^2 u| |u_t|+| u|^2  |\nabla u| |\nabla u_t| \big)\text{d}x\\
\leq& C |u_t|_6 ||\nabla u|^2|_{\frac{3}{2}} |u|_{6}+C ||u|^2|_{3} |\nabla^2 u|_{2} |u_t|_{6}+C||u|^2|_{3} |\nabla u|_{6}  | \nabla u_t|_{2}\\
\leq& C \big( |\nabla u|^2_{3} |\nabla u|_{2}+ |\nabla u|^2_{2} \|\nabla u\|_{1}   \big)|\nabla u_t|_{2}\\
\leq & C\|\nabla u\|_{1}| \nabla u_t|_{2} \leq \epsilon|\nabla u_t|^2_{2}+C(\epsilon)\|\nabla u\|^2_{1},
\end{split}
\end{equation}
where we have used the fact that 
\begin{equation}\label{mou5}
||u|^2|_{3} \leq C|u|^2_{6}  \leq C |\nabla u|^2_{2},\quad |\nabla u|^2_{3} \leq C|\nabla u|_{2} |\nabla u|_{6} \leq C |\nabla u|_{2} \|\nabla u\|_{1}.
\end{equation}
And similarly, we also have
\begin{equation}\label{wmou4}
\begin{split}
L_9=&-\int_{\Omega} \rho u_t \cdot \nabla u \cdot u_t\text{d}x  \leq  C|\rho|^{\frac{1}{2}}_{\infty}|u_t|_{6}|\sqrt{\rho} u_t|_{2}|\nabla u|_{3}\\
\leq& \epsilon|\nabla u_t|^2_{2}+C(\epsilon)|\sqrt{\rho} u_t|^2_{2} \|\nabla u\|^2_{1},\\
L_{10}=&\int_{\Omega} P_t \text{div} u_t\text{d}x \leq \int_{\Omega}|u\cdot \nabla P+\gamma P\text{div}v| |\nabla u_t| \text{d}x\\
\leq &C|u|_{\infty}|\nabla P|_{2}|\nabla u_t|_{2}+C|P|_{\infty}|\text{div} u|_{2}|\nabla u_t|_{2}\\
\leq & \epsilon|\nabla u_t|^2_{2}+C(\epsilon)\|\nabla u\|^2_{1},\\
L_{11}+L_{12}=&
\int_{\Omega}H  \cdot H_t \text{div} u_t\text{d}x-\int_{\Omega}\big(H \cdot \nabla u_t \cdot H_t+H_t \nabla u_t \cdot H\big)\text{d}x\\
\leq& C|\nabla u_t|_2 |H_t|_2 |H|_{\infty}\leq  C\big(|H|_\infty |\nabla u|_2+|u|_\infty |\nabla H|_2\big) |\nabla u_t|_{2}\\
\leq &  \epsilon|\nabla u_t|^2_{2}+C(\epsilon)\|\nabla u\|^2_{1}.
\end{split}
\end{equation}

Then combining the above estimate (\ref{wzhou6})-(\ref{wmou4}),  from (\ref{wzhen4}), we have
\begin{equation}\label{wzhen5g}
\begin{split}
&\frac{1}{2}\frac{d}{dt}\int_{\Omega}\rho |u_t|^2 \text{d}x+\int_{\Omega}|\nabla u_t|^2\text{d}x
\leq C(|\sqrt{\rho}u_t|^2_{2}+1)(\|\nabla u\|^2_1+1).
\end{split}
\end{equation}
Then integrating (\ref{wzhen5g}) over $(\tau,t)$ ($\tau\in (0,t)$), for $\tau\leq t \leq T$, we  have
\begin{equation}\label{wnvk3}
\begin{split}
&|\sqrt{\rho}u_t(t)|^2_2 +\int_{\tau}^{t}|\nabla u_t|^2_{D^1}\text{d}s
\leq |\sqrt{\rho}u_t(\tau)|^2_2+C\int_{\tau}^{t}(\|\nabla u\|^2_1+1) |\sqrt{\rho}u_t|^2_2 \text{d}s+C.
\end{split}
\end{equation}
From the momentum equations  $ (\ref{eq:1.2})_4$, we easily have
\begin{equation}\label{wli9}
\begin{split}
|\sqrt{\rho}u_t(\tau)|^2_2\leq C\int_{\Omega} \rho |u|^2|\nabla u|^2\text{d}x+C\int_{\Omega} \frac{|\nabla P+Lu- \text{rot}H\times H|^2}{\rho}\text{d}x,
\end{split}
\end{equation}
due to the initial layer compatibility condition (\ref{th79}), letting $\tau\rightarrow 0$ in (\ref{wli9}), we have
\begin{equation}\label{wnvk33}
\begin{split}
\lim \sup_{\tau\rightarrow 0}|\sqrt{\rho}u_t(\tau)|^2_2 \leq C\int_{\Omega} \rho_0 |u_0|^2|\nabla u_0|^2\text{d}x+C\int_{\Omega} |g_1|^2\text{d}x\leq C.
\end{split}
\end{equation}
Then, letting $\tau\rightarrow 0$ in (\ref{wnvk3}), from Gronwall's inequality and (\ref{mousun}), we deduce that
\begin{equation}\label{wnvk55}
\begin{split}
|\sqrt{\rho}u_t(t)|^2_2 +|u(t)|_{D^2}+\int_{0}^{t}|\nabla u_t|^2_{D^1}\text{d}s\leq C,\ 0 \leq t \leq T.
\end{split}
\end{equation}

\end{proof}

Finally, the following lemma gives bounds of $\nabla \rho$, $\nabla H$ and $\nabla^2 u$.
 \begin{lemma}\label{s7}
 \begin{equation}\label{zhu54}
\begin{split}
&\big(\|\big(\rho,H,P)(t)\|_{W^{1,q}}+|(\rho_t,H_t, P_t)(t)|_q\big)+\int_0^T|u(t)|^2_{D^{2,q}}\text{d}t\leq C,
\quad 0\leq t<  T,
\end{split}
\end{equation}
where $C$ only depends on $C_0$ and $T$ $(any \ T \in (0,\overline{T}])$, and $q\in (3,6]$.
\end{lemma}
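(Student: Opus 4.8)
The plan is to close a coupled estimate between an elliptic bound for $u$ in $D^{2,q}$ and transport bounds for $\nabla\rho$, $\nabla P$, $\nabla H$ in $L^q$, and then to run a single Gronwall argument driven by the assumption $\int_0^T|D(u)|_\infty\,\text{d}t\leq C_0$ of (\ref{we11*}) together with the bounds already established in Lemmas \ref{s2}--\ref{wlem:4-1}. I would set $\phi(t)=|\nabla\rho(t)|_q+|\nabla P(t)|_q+|\nabla H(t)|_q$ and aim to show $\phi$ stays bounded on $[0,T)$; the $W^{1,q}$ bounds on $(\rho,H,P)$ then follow by adjoining the $L^\infty$ (hence $L^q$, on the bounded domain $\Omega$) bounds from Lemma \ref{s1}, the space--time bound on $u$ follows from the elliptic estimate, and the $L^q$ bounds on $(\rho_t,H_t,P_t)$ follow by reading them off the evolution equations.

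First I would derive the transport estimates. Differentiating the continuity equation, the pressure equation (\ref{mou9}) and the magnetic equation $H_t=H\cdot\nabla u-u\cdot\nabla H-H\,\text{div}u$ in $x$, multiplying by $q|\nabla\rho|^{q-2}\nabla\rho$, $q|\nabla P|^{q-2}\nabla P$ and $q|\nabla H|^{q-2}\nabla H$ respectively and integrating over $\Omega$, the convection terms $u\cdot\nabla(\nabla\,\cdot\,)$ integrate by parts into a harmless $|\text{div}u|_\infty$ factor, while the genuinely dangerous terms of the form $(\nabla u)^\top\nabla\rho$ (and their analogues for $P$ and $H$) are symmetric in the sense already exploited in Lemma \ref{s4} (see (\ref{zhu20}) and (\ref{zhu20qq})): contracting against $\nabla\rho\otimes\nabla\rho$, or against $\partial_jH\otimes\partial_jH$, replaces $\nabla u$ by its symmetric part $D(u)$. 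The only terms carrying second derivatives of $u$ are $\rho\nabla\text{div}u$ (from $\rho$ and $P$) and $H\otimes\nabla\text{div}u$, $H\cdot\nabla\nabla u$ (from $H$), all bounded in $L^q$ by $C(1+|H|_\infty)|u|_{D^{2,q}}$. Using Lemma \ref{s1}, this yields
\[
\frac{d}{dt}\phi\leq C|D(u)|_\infty\,\phi+C|u|_{D^{2,q}}+C.
\]

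Next I would bound $|u|_{D^{2,q}}$ elliptically. Writing $(\ref{eq:1.2})_4$ as $Lu=-\rho u_t-\rho u\cdot\nabla u-\nabla P+\text{rot}H\times H=:F$ and invoking Lemma \ref{zhenok} (with $k=0$, $l=q$), one has $|u|_{D^{2,q}}\leq C(|F|_q+|u|_{D^{1,q}_0})$. Each piece of $F$ is controlled by quantities already bounded: $|\rho u_t|_q\leq C|u_t|_6\leq C|\nabla u_t|_2$ (Sobolev on the bounded domain, since $u_t\in D^1_0$ and $q\leq 6$), $|\rho u\cdot\nabla u|_q\leq C|u|_\infty|\nabla u|_q\leq C$ by Lemmas \ref{gag}, \ref{s4} and \ref{wlem:4-1}, the terms $|\nabla P|_q$ and $|\text{rot}H\times H|_q\leq C|H|_\infty|\nabla H|_q$ are parts of $\phi$, and $|u|_{D^{1,q}_0}=|\nabla u|_q\leq C$ by interpolation between $|\nabla u|_2$ and $|\nabla u|_6\leq C|u|_{D^2}$. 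Hence $|u|_{D^{2,q}}\leq C(1+|\nabla u_t|_2+\phi)$.

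Substituting this into the transport inequality gives $\frac{d}{dt}\phi\leq C(1+|D(u)|_\infty)\phi+C(1+|\nabla u_t|_2)$. Since $\int_0^T|D(u)|_\infty\,\text{d}t\leq C_0$ by (\ref{we11*}) and $\int_0^T|\nabla u_t|_2\,\text{d}t\leq \sqrt{T}\,\big(\int_0^T|\nabla u_t|_2^2\,\text{d}t\big)^{1/2}\leq C$ by Lemma \ref{wlem:4-1}, Gronwall's inequality bounds $\phi$ uniformly on $[0,T)$. This in turn makes $\int_0^T|u|^2_{D^{2,q}}\,\text{d}t\leq C\int_0^T(1+|\nabla u_t|^2_2+\phi^2)\,\text{d}t\leq C$, and the $L^q$ bounds on $(\rho_t,P_t,H_t)$ follow directly from $\rho_t=-u\cdot\nabla\rho-\rho\,\text{div}u$, its pressure analogue, and $H_t=H\cdot\nabla u-u\cdot\nabla H-H\,\text{div}u$ together with the now-controlled $|\nabla u|_q$, $|\nabla\rho|_q$, $|\nabla H|_q$. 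The main obstacle is precisely the $H$--$u$ coupling: $|u|_{D^{2,q}}$ depends on $|\nabla H|_q$ through $\text{rot}H\times H$, while $|\nabla H|_q$ in turn requires $|u|_{D^{2,q}}$, so neither estimate can be closed alone and the symmetrization to $D(u)$ must be carried through all three transported quantities simultaneously before Gronwall can be applied.
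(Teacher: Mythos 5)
Your proposal is correct and follows essentially the same route as the paper: the elliptic estimate $|u|_{D^{2,q}}\leq C(1+|\nabla u_t|_2+|\nabla P|_q+|\nabla H|_q)$ from $(\ref{eq:1.2})_4$ and Lemma \ref{zhenok}, coupled with the $L^q$ transport estimates for $\nabla\rho$, $\nabla P$, $\nabla H$ in which the symmetrization to $D(u)$ is carried out exactly as in (\ref{zhu20cccc}) and (\ref{zhu20qqs})--(\ref{zhu20qqs2}), closed by a single Gronwall argument using (\ref{we11*}) and $\int_0^T|\nabla u_t|_2\,\text{d}t\leq C$. Your explicit inclusion of $|\nabla P|_q$ in the Gronwall quantity and the concluding remarks on $(\rho_t,P_t,H_t)$ are only cosmetic differences from the paper's presentation.
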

\begin{proof}
Via  $(\ref{eq:1.2})_4$ and Lemmas \ref{ok}, \ref{s2}-\ref{wlem:4-1},  we show that 
 \begin{equation}\label{zhu55}
\begin{split}
|\nabla^2 u|_q \leq& C(|\rho u_t|_q+| \rho u\cdot \nabla u|_q+|\nabla P|_q+|\text{rot}H \times H|_q+|u|_{D^{1,q}_0})\\
\leq& C(1+|\nabla u_t|_2+|\nabla P|_q+  |\nabla H|_q).
\end{split}
\end{equation}

Firstly,  applying $\nabla$ to  $(\ref{eq:1.2})_3$, multiplying the result equations by $q|\nabla \rho|^{q-2} \nabla \rho$, we have
\begin{equation}\label{zhu20cccc}
\begin{split}
&(|\nabla \rho|^q)_t+\text{div}(|\nabla \rho|^qu)+(q-1)|\nabla \rho|^q\text{div}u\\
=&-q |\nabla \rho|^{q-2}(\nabla \rho)^\top D( u) (\nabla \rho)-q \rho|\nabla \rho|^{q-2} \nabla \rho \cdot \nabla \text{div}u.
\end{split}
\end{equation}
Then integrating (\ref{zhu20cccc}) over $\Omega$, we immediately obtain
\begin{equation}\label{zhu200}
\begin{split}
\frac{d}{dt}|\nabla \rho|_q
\leq& C|D( u)|_\infty|\nabla \rho|_q+C|\nabla^2 u|_q.
\end{split}
\end{equation}

Secondly, applying $\nabla$ to  $(\ref{eq:1.2})_1$, multiplying the result equations by $q\nabla H |\nabla H|^{q-2}$, we have
\begin{equation}\label{zhu20qs}
\begin{split}
&(|\nabla H|^2)_t-qA:\nabla H|\nabla H|^{q-2}+q B \nabla H|\nabla H|^{q-2}+qC : \nabla H|\nabla H|^{q-2}=0.
\end{split}
\end{equation}

Then integrating (\ref{zhu20qs}) over $\Omega$, due to
\begin{equation}\label{zhu20qqs}
\begin{split}
&\int_{\Omega}  A: \nabla H |\nabla H|^{q-2}\text{dx}\\
=&\int_{\Omega}  \sum_{j=1}^3\Big(\sum_{i,k}  \partial_j H^k \partial_k u^i \partial_j H^i \Big)|\nabla H|^{q-2} \text{dx}+\int_{\Omega}  \sum_{j=1}^3\sum_{i=1}^3 \sum_{k=1}^3  H^k \partial_{kj} u^i \partial_j H^i |\nabla H|^{q-2}\text{dx}\\
\leq& C|D(u)|_\infty |\nabla H|^q_q+C|H|_\infty|\nabla H|^{q-1}_q |u|_{D^{2,q}},
\end{split}
\end{equation}
\begin{equation}\label{zhu20qqs2}
\begin{split}
&\int_{\Omega}  B: \nabla H|\nabla H|^{q-2} \text{dx}\\
=&\int_{\Omega}  \sum_{j=1}^3\sum_{i=1}^3 \sum_{k=1}^3  \partial_j u^k \partial_k H^i \partial_j H^i|\nabla H|^{q-2} \text{dx}+\int_{\Omega}  \sum_{j=1}^3\sum_{i=1}^3 \sum_{k=1}^3  u^k  \partial_{kj} H^i \partial_j H^i|\nabla H|^{q-2} \text{dx}\\
=&\int_{\Omega}  \sum_{i=1}^3\Big(\sum_{j,k}  \partial_j u^k \partial_k H^i \partial_j H^i \Big)|\nabla H|^{q-2} \text{dx}+\frac{1}{2}\int_{\Omega}  \sum_{k=1}^3  u^k\Big(\sum_{j,i}   \partial_{k} |\partial_j H^i|^2|\nabla H|^{q-2} \Big)\text{dx}\\
=&\int_{\Omega}  \sum_{i=1}^3\Big(\sum_{j,k} \partial_k H^i    \partial_j u^k \partial_j H^i \Big)|\nabla H|^{q-2} \text{dx}+\frac{1}{2}\int_{\Omega}  \sum_{k=1}^3  u^k\Big(\sum_{j,i} \partial_{k} |\nabla H|^2|\nabla H|^{q-2} \Big)\text{dx}\\
=&\int_{\Omega}  \sum_{i=1}^3\Big(\sum_{j,k}   \partial_k H^i \partial_j u^k \partial_j H^i \Big)|\nabla H|^{q-2} \text{dx}+\frac{1}{q}\int_{\Omega}  \sum_{k=1}^3  u^k  \partial_{k} |\nabla H|^{q} \text{dx}\\
\leq& C|D(u)|_\infty |\nabla H|^q_q,\\
&\int_{\Omega}  C: \nabla H|\nabla H|^{q-2} \text{dx}
=\int_{\Omega} \big(\text{div}u|\nabla H|^q+(H\otimes \nabla \text{div}u) :\nabla H|\nabla H|^{q-2}\big) \text{dx}\\
\leq& C|D(u)|_\infty |\nabla H|^q_q+C|H|_\infty|\nabla H|^{q-1}_q |u|_{D^{2,q}},
\end{split}
\end{equation}
 we quickly obtain the following estimate:
\begin{equation}\label{zhu21qs}
\begin{split}
\frac{d}{dt}|\nabla H|_q
\leq& C(|D( u)|_\infty+1)|\nabla H|_q+C | u|_{D^{2,q}}.
\end{split}
\end{equation}

Then from  (\ref{zhu55}), (\ref{zhu200}),  (\ref{zhu21qs})  and Gronwall's inequality, we immediately have
$$
(|\nabla \rho(t)|_q+|\nabla H(t)|_q)\leq C\exp \Big(\int_0^t(1+|D( u)|_\infty) \text{d}s\Big)\leq C, \quad 0\leq t \leq T.
$$

Finally, via (\ref{zhu55}) and Lemma \ref{wlem:4-1}, we easily have
\begin{equation}\label{zhu25ss}
\begin{split}
\int_0^t|u(s)|^2_{D^{2,q}}\text{d}s
\leq& C\int_0^t(1+ |\nabla u_t(s)|^2_2)\text{d}s\leq C,\quad 0\leq t\leq T.
\end{split}
\end{equation}
 \end{proof}

\subsection{Improved regularity}\ \\
In this section, we will get some higher order regularity of $H$, $\rho$ and $u$ to make sure that this solution is a classical one in $[0,\overline{T}]$.  Based on the estimates obtained in the above section, in truth, we have already proved that $\int_0^t|\nabla u|^2_\infty \text{d}s\leq C$. 
\begin{lemma}[\textbf{Higher order estimate }]\label{s8}
\begin{equation*}
\begin{split}
& \big(|(\rho,P,H)(t)|^2_{D^2}+\|(\rho_t,P_t,H_t)(t)\|^2_1\big)+\int_{0}^{T}\Big(|u|^2_{D^{3}}+|(\rho_{tt},P_{tt},H_{tt})|^2_{2}\Big)\text{d}t\leq C, \quad 0\leq t<  T,
\end{split}
\end{equation*}
where $C$ only depends on $C_0$ and $T$ $(any \ T \in (0,\overline{T}])$.
 \end{lemma}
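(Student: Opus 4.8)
The plan is to close the higher-order estimates through a single coupled Gronwall argument on the second spatial derivatives of $(\rho,P,H)$, using the elliptic regularity of the Lam\'e operator to control $|u|_{D^3}$. First I would record the consequence of Lemma \ref{s7} that $\int_0^T|\nabla u|^2_\infty\,\text{d}t\leq C$: indeed $|\nabla u|_\infty\leq C|u|_{D^{2,q}}$ since $q>3$, and $\int_0^T|u|^2_{D^{2,q}}\,\text{d}t\leq C$; by Cauchy--Schwarz this also yields $\int_0^T|\nabla u|_\infty\,\text{d}t\leq C$. Rewriting $(\ref{eq:1.2})_4$ as $Lu=-\rho u_t-\rho u\cdot\nabla u-\nabla P+\text{rot}H\times H$ and invoking Lemma \ref{ok}, I obtain $|u|_{D^3}\leq C\big(1+|\nabla u_t|_2+|\nabla^2 P|_2+|\nabla^2 H|_2\big)$, where the magnetic term is treated via $\text{rot}H\times H=H\cdot\nabla H-\tfrac12\nabla|H|^2$ together with the bounds on $|H|_\infty$ and $|\nabla H|_q$ from Lemmas \ref{s1} and \ref{s7}.

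Next I would differentiate the transport equations $(\ref{eq:1.2})_3$, $(\ref{eq:1.2})_1$ and (\ref{mou9}) twice in space. Setting $\Phi(t)=|\nabla^2\rho|^2_2+|\nabla^2 P|^2_2+|\nabla^2 H|^2_2$, multiplying by $\nabla^2\rho$, $\nabla^2 P$, $\nabla^2 H$ respectively and integrating by parts, the transport parts contribute $\tfrac12\int_\Omega(\text{div}u)\,|\nabla^2\rho|^2\,\text{d}x$ and its analogues for $P,H$, bounded by $C|\nabla u|_\infty\Phi$; the commutators between $\nabla^2$ and $u\cdot\nabla$ are controlled by Lemma \ref{zhen1} in terms of $(|\nabla u|_\infty+1)\Phi$ plus lower-order pieces; and the genuinely top-order terms $\rho\,\nabla^3 u$ and $H\cdot\nabla^3 u$ produce, after Young's inequality, a factor $|u|^2_{D^3}$. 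This gives the differential inequality $\tfrac{d}{dt}\Phi\leq C(|\nabla u|_\infty+1)\Phi+C|u|^2_{D^3}$.

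The crux of the argument, and the step I expect to be the main obstacle, is that this right-hand side $|u|^2_{D^3}$ is a priori of the same order as $\Phi$ itself: substituting the elliptic bound from the first step gives $|u|^2_{D^3}\leq C\big(1+|\nabla u_t|^2_2+\Phi\big)$, so the dangerous $\nabla^2 P$ and $\nabla^2 H$ contributions feed back into $\Phi$. Absorbing them, the inequality becomes $\tfrac{d}{dt}\Phi\leq C(|\nabla u|_\infty+1)\Phi+C\big(1+|\nabla u_t|^2_2\big)$, whose forcing term is integrable in time by Lemma \ref{wlem:4-1} and whose Gronwall exponent $\int_0^T(|\nabla u|_\infty+1)\,\text{d}t$ is finite by the first step. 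Since $\Phi(0)\leq C$ by the initial regularity (\ref{th78}), Gronwall's inequality yields $\sup_{[0,T]}\Phi(t)\leq C$, which is the $|(\rho,P,H)|^2_{D^2}$ bound; reinserting it into the elliptic estimate gives $\int_0^T|u|^2_{D^3}\,\text{d}t\leq C$.

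Finally I would recover the time-derivative bounds directly from the evolution equations, which no longer requires any differential inequality. Differentiating $(\ref{eq:1.2})_3$, $(\ref{eq:1.2})_1$ and (\ref{mou9}) once in space expresses $\nabla\rho_t$, $\nabla H_t$, $\nabla P_t$ through $\nabla u$, $\nabla^2 u$ and the now-bounded $\nabla\rho$, $\nabla^2\rho$, $\nabla H$, $\nabla^2 H$, so that $\|(\rho_t,P_t,H_t)\|_1\leq C$. Differentiating once in time, the formulas for $\rho_{tt}$, $P_{tt}$, $H_{tt}$ involve $u_t$, $\nabla u_t$ multiplied by already-controlled quantities; taking $L^2$ norms and integrating, the only non-elementary contribution is $\int_0^T|\nabla u_t|^2_2\,\text{d}t$, finite by Lemma \ref{wlem:4-1}, which delivers $\int_0^T|(\rho_{tt},P_{tt},H_{tt})|^2_2\,\text{d}t\leq C$ and completes the proof.
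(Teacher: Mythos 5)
Your proposal is correct and follows essentially the same route as the paper: elliptic regularity for the Lam\'e operator to bound $|u|_{D^3}$ by $|\nabla u_t|_2+|P|_{D^2}+|H|_{D^2}$, energy estimates on $\nabla^2$ applied to the three transport equations, a coupled Gronwall argument using the integrability of $|\nabla u|_\infty$ and $|\nabla u_t|^2_2$, and finally the evolution equations to recover the bounds on $(\rho_t,P_t,H_t)$ and $(\rho_{tt},P_{tt},H_{tt})$. The only differences are cosmetic (you track the sum of squared $D^2$-norms rather than the sum of norms, and you make the feedback of $|u|^2_{D^3}$ into $\Phi$ more explicit).
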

 \begin{proof}Via  $(\ref{eq:1.2})_4$ and Lemmas \ref{ok}, \ref{s2}-\ref{s7},  we show that 
 \begin{equation}\label{zhu150}
\begin{split}
|u|_{D^3}\leq& C(|\rho u_t|_{D^1}+|\rho u\cdot \nabla u|_{D^1}+|\nabla P|_{D^1}+|\text{rot}H \times H|_{D^1})\\
\leq& C(1+|u_t|_{D^1}+| P|_{D^2}+|H|_{D^2}).
\end{split}
\end{equation}

Firstly,
 applying $\nabla^2$  to  $(\ref{eq:1.2})_3$, and 
multiplying the result equation by $2\nabla^2 \rho$, integrating  over $\Omega$ we easily deduce that
\begin{equation}\label{zhenzhen11}\begin{split}
\frac{d}{dt}|\rho|^2_{D^2}\leq& C|\nabla u|_\infty|\rho|^2_{D^2}+C|\rho|_{\infty}|u|_{D^3}|\rho|_{D^2}+|\nabla \rho|_3|\nabla^2 \rho|_2\|\nabla ^2 u\|_1,
\end{split}
\end{equation}
which, together with (\ref{zhu150}),
\begin{equation}\label{zhenzhenss}\begin{split}
\frac{d}{dt}|\rho|_{D^2}\leq& C(|\nabla u|_\infty+1)(1+|\rho|_{D^2}+|P|_{D^2}+|H|_{D^2})+C|\nabla u_t|^2_2.
\end{split}
\end{equation}
And similarly, we have 
\begin{equation}\label{zhenzhensss}\begin{cases}
\displaystyle
\frac{d}{dt}|H|_{D^2}\leq C(|\nabla u|_\infty+1)(1+|P|_{D^2}+|H|_{D^2})+C|\nabla u_t|^2_2,\\[8pt]
\displaystyle
\frac{d}{dt}|P|_{D^2}\leq C(|\nabla u|^2_\infty+1)(1+|P|_{D^2}+|H|_{D^2})+C|\nabla u_t|^2_2.
\end{cases}
\end{equation}

So combining (\ref{zhenzhenss})- (\ref{zhenzhensss}),  we quickly have
\begin{equation}\label{zhmm12acc}
\begin{split}
&\frac{d}{dt}(|\rho|_{D^2}+|H|_{D^2}+|P|_{D^2})\\
\leq& C(1+|\nabla u|_\infty)(|\rho|_{D^2}+|H|_{D^2}+|P|_{D^2})+C(1+|\nabla u_t|^2_2).\end{split}
\end{equation}
Then via Gronwall's inequality and (\ref{zhmm12acc}),  we obtain
\begin{equation*}
\begin{split}
|\rho|_{D^2}+|H|_{D^2}+|P|_{D^2}+\int_{0}^{t}|u(s)|^2_{D^{3}}\text{d}t\leq C, \quad 0\leq t\leq T.
\end{split}
\end{equation*}

Finally, due to the following relation
\begin{equation}\label{ghtuss}
\begin{cases}
H_t=H \cdot \nabla u-u \cdot \nabla H-H\text{div}u,\\[6pt]
\rho_t=-u\cdot \nabla \rho-\rho\text{div} u,\ P_t=-u \cdot \nabla P-\gamma P \text{div}u,
\end{cases}
\end{equation}
we immediately get the desired conclusions.
\end{proof}
Now we will give some estimates for the higher order terms of the velocity $u$ in the following three Lemmas.
\begin{lemma}[\textbf{Higher order estimate of the velocity $u$}]\label{wlem:4-3}\ \\
\begin{equation*}
\begin{split}
t|u_t(t)|^2_{D^1_0}+t|u(t)|^2_{D^3}+\int_{0}^{T}t\big(|u_t|^2_{D^{2}}+|\sqrt{\rho} u_{tt}|^2_{2}\big)\text{d}s\leq C, \quad 0\leq t \leq T,
\end{split}
\end{equation*}
where $C$ only depends on $C_0$ and $T$ $(any \ T \in (0,\overline{T}])$.
 \end{lemma}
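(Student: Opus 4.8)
The plan is to reproduce the time-weighted energy scheme of Lemma \ref{lem:4-3}, but now working directly with the nonlinear system (so $v=u$) and replacing the chain of constants $c_i$ by a single $C=C(C_0,T)$ provided by Lemmas \ref{s2}--\ref{s8}. The starting point is the already $t$-differentiated momentum equation (\ref{wgh78}). I multiply it by $u_{tt}$ and integrate over $\Omega$; integrating by parts on $\nabla P_t$ and on $(\text{rot}H\times H)_t$ produces, exactly as in (\ref{f3}), the identity
\[
\int_{\Omega}\rho|u_{tt}|^2\,\text{d}x+\frac12\frac{\text{d}}{\text{d}t}\int_{\Omega}\big(\mu|\nabla u_t|^2+(\lambda+\mu)(\text{div}\,u_t)^2\big)\,\text{d}x=\frac{\text{d}}{\text{d}t}\Lambda_3(t)+\Lambda_4(t),
\]
where $\Lambda_3$ gathers the exact-time-derivative contributions and $\Lambda_4$ the remainder; the nonlinearity $(\rho u\cdot\nabla u)_t$ now feeds both $\Lambda_3$ and $\Lambda_4$ with the genuinely quadratic pieces $\rho u_t\cdot\nabla u$ and $\rho u\cdot\nabla u_t$.

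Next I estimate $\Lambda_3$ and $\Lambda_4$ by H\"older, Gagliardo--Nirenberg and Young, drawing the uniform bounds on $\rho,H,P,\nabla u,\sqrt\rho u_t$ and on $|(\rho,P,H)|_{D^2}$, $\|(\rho_t,P_t,H_t)\|_1$ and $|(\rho_{tt},P_{tt},H_{tt})|_2$ from Lemmas \ref{s2}--\ref{s8}. As in (\ref{zhen6}) this gives $\Lambda_3\le\frac{\mu}{10}|\nabla u_t|_2^2+C$, so that
\[
\Lambda^*(t):=\tfrac12\int_{\Omega}\big(\mu|\nabla u_t|^2+(\lambda+\mu)(\text{div}\,u_t)^2\big)\,\text{d}x-\Lambda_3(t)
\]
is comparable to $|\nabla u_t|_2^2$ up to the additive constant $C$; and, paralleling (\ref{zhen10}), $\Lambda_4$ is controlled by $\tfrac12|\sqrt\rho u_{tt}|_2^2$ (absorbed on the left) plus terms of the form $C(1+|\nabla u_t|_2^2)|\nabla u_t|_2^2$ and $C(1+|\rho_{tt}|_2^2+|P_{tt}|_2^2+|H_{tt}|_2^2)$.

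I then multiply the resulting differential inequality for $\Lambda^*$ by $t$ and integrate over $(\tau,t)$. Since the weight vanishes at the origin I cannot set $\tau=0$ directly; instead, Lemma \ref{wlem:4-1} supplies $\nabla u_t\in L^2([0,T];L^2)$, so Lemma \ref{bei} yields a sequence $s_k\to0$ with $s_k|\nabla u_t(s_k)|_2^2\to0$, and letting $\tau=s_k\to0$ removes the boundary term. Gronwall's inequality, whose exponent $\int_0^t(1+|\nabla u_t|_2^2)\,\text{d}s$ is finite again by Lemma \ref{wlem:4-1}, then gives $t|\nabla u_t(t)|_2^2+\int_0^t s|\sqrt\rho u_{tt}|_2^2\,\text{d}s\le C$. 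Feeding this into the elliptic estimate (Lemma \ref{ok}) applied to $Lu=-\rho u_t-\rho u\cdot\nabla u-\nabla P+\text{rot}H\times H$ and to its $t$-derivative produces the remaining bounds $t|u(t)|_{D^3}^2\le C$ and $\int_0^t s|u_t|_{D^2}^2\,\text{d}s\le C$, completing the statement.

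The delicate point will be the bookkeeping inside $\Lambda_4$: because $v=u$, the convective term generates cubic interactions such as $\int_\Omega\rho u_t\cdot\nabla u\cdot u_{tt}$ and $\int_\Omega\rho u\cdot\nabla u_t\cdot u_{tt}$, and the second time derivatives $\rho_{tt},P_{tt},H_{tt}$ enter as well. Every such term must be split so that it is either absorbable into $|\sqrt\rho u_{tt}|_2^2$ and $|\nabla u_t|_2^2$, or integrable in $s$ against the weight (using that $|(\rho_{tt},P_{tt},H_{tt})|_2\in L^2_t$ from Lemma \ref{s8}). Once the right-hand side has been forced into the structure of (\ref{zhen10}), the Gronwall-plus-elliptic bootstrap is identical to that of Lemma \ref{lem:4-3}.
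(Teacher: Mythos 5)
Your proposal follows the paper's own proof essentially step for step: the same multiplication of the time-differentiated momentum equation by $u_{tt}$, the same splitting into an exact-derivative part $\Phi_1$ (your $\Lambda_3$) and a remainder $\Phi_2$ (your $\Lambda_4$) with the same bounds, the same use of Lemma \ref{bei} to kill the boundary term at $\tau=0$, and the same Gronwall-plus-elliptic bootstrap via Lemma \ref{ok} to recover $t|u|^2_{D^3}$ and $\int_0^t s|u_t|^2_{D^2}\,\text{d}s$. No substantive difference.
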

\begin{proof}
Firstly, multiplying  (\ref{wgh78}) by $u_{tt}$ and integrating  over $\Omega$, we have
\begin{equation}\label{wf3}
\begin{split}
&\int_{\Omega}\rho |u_{tt}|^2 \text{d}x+\frac{1}{2}\frac{d}{dt}\int_{\Omega}\Big(\mu|\nabla u_t|^2+(\lambda+\mu)(\text{div}u_t)^2\Big) \text{d}x\\
=& \int_{\Omega} \Big(\big(-\nabla P_t-(\rho u\cdot \nabla u)_t-\rho_tu_t+(\text{rot}H\times H)_t\big)\cdot u_{tt}\Big)\text{d}x
=\frac{d}{dt}\Phi_1(t)+ \Phi_2(t),
\end{split}
\end{equation}
where
\begin{equation*}
\begin{split}
\Phi_1(t)=&\int_{\Omega} \big( P_t\text{div}u_t-\rho_t(u\cdot \nabla u)\cdot u_t-\frac{1}{2}\rho_t|u_t|^2+(\text{rot}H\times H)_t \cdot u_t \big)\text{d}x,\\
\Phi_2(t)=&\int_{\Omega} \big(-P_{tt}\text{div}u_t-\rho (u\cdot \nabla u)_t \cdot u_{tt}+\rho_{tt} (u\cdot \nabla u) \cdot u_t
 +\rho_{t} (u\cdot \nabla u)_t \cdot u_t\big)\text{d}x\\
 &+\int_{\Omega}\big( \frac{1}{2}\rho_{tt}|u_t|^2- (\text{rot}H\times H)_{tt}\cdot u_t\big)\text{d}x\equiv:\sum_{i=13}^{18}L_i.
\end{split}
\end{equation*}
Then  almost same to (\ref{wzhou6}), we also have
\begin{equation}\label{wzhen6}\begin{split}
\Phi_1(t)\leq \frac{\mu}{10} |\nabla u_t|^2_2+C.
\end{split}
\end{equation}
Let we denote
$$
\Phi^*(t)=\frac{1}{2}\int_{\Omega}\mu|\nabla u_t|^2+(\lambda+\mu)(\text{div}u_t)^2\text{d}x-\Lambda_3(t),
$$
then from  (\ref{wzhen6}), for $0 \leq t \leq T$, we quickly  have
\begin{equation}\label{wkui}
\begin{split}
C|\nabla u_t|^2_{2}-C \leq \Phi^*(t)\leq&  C|\nabla u_t|^2_{2}+C.
\end{split}
\end{equation}

Similarly, according to Lemmas \ref{s1}-\ref{s8}, Holder's inequality and  Gagliardo-Nirenberg inequality, for $0 < t \leq T$,  we deduce that
\begin{equation}\label{wght}
\begin{split}
&L_{13}\leq C|P_{tt}|_{2}  | \nabla u_t|_{2},\ L_{14} \leq |\rho|^{\frac{1}{2}}_{\infty} |\sqrt{\rho}u_{tt}|_2\big(|u|_{\infty} |\nabla u_t|_2+|\nabla u|_3 |\nabla u_t|_2\big),\\
&L_{15}\leq C|\rho_{tt}|_2 |\nabla u_t|_2 |\nabla u|_3 |u|_{\infty}, \\
& L_{16}\leq C|\rho_t|_2 |u_{t}|_6 |\nabla u|_{6} |\nabla u_t|_2+C|u|_{\infty}|u_t|_{6}|\nabla u_t|_{2}|\rho_t|_3,\\
&L_{17}\leq C|\rho_{t}|_3 |\nabla u_t|_2|u|_{\infty}|u_t|_{6}+C|\rho|^{\frac{1}{2}}_{\infty} |\sqrt{\rho}u_t|_{3} | u_t|_{6} |\nabla u_t|_2,
\end{split}
\end{equation}
where we have used the facts $\rho_t=-\text{div}(\rho u)$,
and
\begin{equation}\label{wzhen7}
\begin{split}
 L_{18}=&-\int_{\Omega}(\text{rot}H\times H)_{tt}\cdot u_{t}\text{d}x
=\int_{\Omega}\big(H\otimes H-\frac{1}{2}|H|^2I_3\big)_{tt}: \nabla u_{t}\text{d}x\\
\leq& C|\nabla u_t|_2|H_t|^2_4+C|\nabla u_t|_2|H_{tt}|_2| H|_{\infty}.
\end{split}
\end{equation}
Combining (\ref{wght})-(\ref{wzhen7}), from Young's inequality, we have
\begin{equation}\label{wzhen10}\begin{split}
\Phi_2(t) \leq&\frac{1}{2}|\sqrt{\rho} u_{tt}(t)|^2_{2}+C(1+|\nabla u_t|^2_{2})|\nabla u_t|^2_{2} 
C|\nabla u|^2_{\infty}
+C(|P_{tt}|^2_2+|\rho_{tt}|^2_2+|H_{tt}|^2_2).
\end{split}
\end{equation}
Then   multiplying  (\ref{wf3}) by $t$ and integrating the result inequality  over $(\tau,t)$ ($\tau\in (0,t)$), from (\ref{wkui}) and (\ref{wzhen10}),
we have
\begin{equation}
\label{we2}
\begin{split}
&\int_{\tau}^{t}s|\sqrt{\rho}u_{tt}(s)|^2_2\text{d}s+t|\nabla u_t(t)|^2_{2}
\leq \tau | u_t(\tau)|^2_{D^1_0}+C\int_{\tau}^{t}s(1+|\nabla u_t|^2_2)|\nabla u_t|^2_{2}\text{d}s+C
\end{split}
\end{equation}
for $\tau\leq t \leq T$.  From Lemma \ref{wlem:4-1}, we have $\nabla u_{t}\in L^2([0,T];L^2)$, then  according to Lemma \ref{bei},  there exists a sequence ${s_k}$ such that
$$
s_k\rightarrow 0,\quad \text{and} \quad s_k|\nabla u_{t}(s_k)|^2_2\rightarrow 0, \quad \text{as} \quad k\rightarrow \infty.
$$
Therefore, letting $\tau=s_k\rightarrow 0$ in (\ref{we2}),  from Gronwall's inequality, we have
\begin{equation*}
\begin{split}
&\int_{0}^{t}s|\sqrt{\rho}u_{tt}(s)|^2_2\text{d}s+t| u_t(t)|^2_{D^1_0}
\leq C\exp\Big(\int_{0}^{t}(1+|\nabla u_t|^2_2)\text{d}s\Big)\leq C.
\end{split}
\end{equation*}
From (\ref{zhu150}) (\ref{we2}), Lemmas \ref{ok} and \ref{s2}-\ref{s8}
 we immediately have
\begin{equation*}\begin{split}
t|u(t)|^2_{D^3}+\int_0^t s|u_t|^2_{D^2} \text{d}s \leq C(t |u_t(t)|_{D^1_0}+1)+C \int_0^t s(1+|\sqrt{\rho}u_{tt}|^2_2)\text{d}s\leq C.
\end{split}
\end{equation*}

\end{proof}

\begin{lemma}[\textbf{Higher order estimate of the velocity $u$}]\label{wlem:4-4}\ \\
\begin{equation*}
\begin{split}
\big(|(\rho,P,H)(t)|_{D^{2,q}}+t|(\rho_t,P_t,H_t)(t)|_{D^{1,q}}\big)+\int_{0}^{T} |u|^{p_0}_{D^{3,q}}\text{d}t\leq C,
\end{split}
\end{equation*}
where $C$ only depends on $C_0$ and $T$ $(any \ T \in (0,\overline{T}])$.
 \end{lemma}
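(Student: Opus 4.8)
The plan is to close a coupled pair of estimates, in the same spirit as Lemma \ref{lem:2} and Lemma \ref{lem:4-4*}, but now with all constants depending only on $C_0$ and $T$ through the bounds already secured in Lemmas \ref{s1}--\ref{wlem:4-3}. Write $\Phi(t)=|(\rho,P,H)(t)|_{D^{2,q}}$ and $N(t)=\int_0^t|u(s)|^{p_0}_{D^{3,q}}\,\text{d}s$. The third-order $L^q$ regularity of $u$ will be controlled by the elliptic regularity of the Lam\'e operator, while the $D^{2,q}$ regularity of the transported quantities will be controlled by transport (commutator) estimates. Since each forcing contains the other, I would derive both and then decouple them by a Gronwall argument carried out on the integrated quantity $N$.

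\emph{Step 1 (elliptic bound for $u$).} Writing $(\ref{eq:1.2})_4$ as $Lu=-\rho u_t-\rho u\cdot\nabla u-\nabla P+\text{rot}H\times H$ and invoking Lemma \ref{zhenok}, I obtain, exactly as in (\ref{zhen11}) and (\ref{zhu55}),
\begin{equation*}
|u|_{D^{3,q}}\leq C\big(1+|u_t|_\infty+|\nabla u_t|_q+|P|_{D^{2,q}}+|H|_{D^{2,q}}\big)\leq C\big(1+|u_t|_\infty+|\nabla u_t|_q\big)+C\Phi .
\end{equation*}
The interpolation already used in Lemma \ref{lem:4-4*}, namely $|u_t|_\infty\leq C(|\nabla u_t|_2+|\nabla u_t|_q)$ together with $|\nabla u_t|_q\leq C|\nabla u_t|_2^{(6-q)/(2q)}\,|\nabla u_t|_6^{(3q-6)/(2q)}$ and $|\nabla u_t|_6\leq C|u_t|_{D^2}$, lets me reproduce the computation (\ref{moufan}) verbatim, so that the $\Phi$-independent part obeys
\begin{equation*}
\int_0^T\big(|u_t|^{p_0}_\infty+|\nabla u_t|^{p_0}_q\big)\,\text{d}t\leq C,
\end{equation*}
the singular weight $s^{-p_0/2}$ near $t=0$ being integrable precisely because $0<p_0<2$, the remaining factors being absorbed by $\sup_{[0,T]}s|\nabla u_t|_2^2\leq C$ and $\int_0^T s|u_t|^2_{D^2}\,\text{d}s\leq C$ from Lemmas \ref{wlem:4-1} and \ref{wlem:4-3}.

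\emph{Step 2 (transport bound and decoupling).} Applying $\nabla^2$ to (\ref{mou9}), to the continuity equation, and to $(\ref{eq:1.2})_1$, multiplying by $q|\nabla^2(\cdot)|^{q-2}\nabla^2(\cdot)$ and integrating, with the commutator estimates of Lemma \ref{zhen1} treated as in (\ref{zhu20qqs})--(\ref{zhu21qs}) one derivative higher, I obtain
\begin{equation*}
\frac{d}{dt}\Phi\leq C\big(1+|\nabla u|_\infty\big)\Phi+C\big(1+|u|_{D^{3,q}}\big).
\end{equation*}
Since $\int_0^T|\nabla u|_\infty\,\text{d}t<\infty$ (Cauchy--Schwarz applied to $\int_0^t|\nabla u|^2_\infty\,\text{d}s\leq C$, noted before Lemma \ref{s8}) and $\Phi(0)\leq C$ by (\ref{th78}), Gronwall yields $\Phi(t)\leq C\big(1+\int_0^t|u|_{D^{3,q}}\,\text{d}s\big)\leq C\big(1+t^{1-1/p_0}N(t)^{1/p_0}\big)$ by H\"older. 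Feeding this into Step 1 and integrating the $p_0$-th power gives $N(t)\leq C+C\int_0^t s^{p_0-1}N(s)\,\text{d}s$; because $p_0>1$ the kernel $s^{p_0-1}$ is bounded on $[0,T]$, so Gronwall closes the loop and produces $N(T)\leq C$, and hence $\sup_{[0,T]}\Phi\leq C$. With $\Phi$ in hand, the weighted bounds on the time derivatives follow by differentiating the relations (\ref{ghtuss}), taking the $D^{1,q}$ norm, and using $|(\rho,P,H)|_{D^{2,q}}\leq C$, $|u|_\infty+|\nabla u|_q\leq C$ and $t^{1/2}|u|_{D^{2,q}}\leq Ct^{1/2}|u|_{D^3}\leq C$ (from Lemma \ref{wlem:4-3} and $D^3\hookrightarrow D^{2,q}$ for $q\leq 6$): the leading terms $\rho\nabla\text{div}u$ and $H\cdot\nabla^2 u$ give a factor $|u|_{D^{2,q}}\leq Ct^{-1/2}$ that the weight $t$ converts into $Ct^{1/2}\leq C$, so $t|(\rho_t,P_t,H_t)|_{D^{1,q}}\leq C$.

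The main obstacle is the circular coupling in Steps 1--2: the $D^{3,q}$ norm of $u$ requires the $D^{2,q}$ norms of $\rho,P,H$, which in turn require the $D^{3,q}$ norm of $u$, while $u$ is only $L^{p_0}$ in time with $p_0<2$. Decoupling this without $L^1$-in-time control of the dangerous quantities $|u_t|_\infty,|\nabla u_t|_q$ is delicate, especially at the endpoint $q=6$, where a pointwise-in-$t$ Gronwall would force the borderline divergent integral $\int_0^T|\nabla u_t|_q\,\text{d}t$. The resolution is to run the Gronwall argument on the integrated quantity $N(t)$ rather than on $\Phi(t)$ pointwise, which replaces that $s^{-1}$ singularity by the harmless kernel $s^{p_0-1}$ and makes the whole range $q\in(3,6]$ uniform.
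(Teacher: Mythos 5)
Your argument is essentially the paper's: the same elliptic bound $|u|_{D^{3,q}}\leq C(1+|u_t|_\infty+|\nabla u_t|_q+|u|_{D^{2,q}})+C\Phi$ from Lemma \ref{zhenok}, the same $L^q$ transport/commutator estimates for $\Phi=|(\rho,P,H)|_{D^{2,q}}$ as in (\ref{zhenzhen1d1})--(\ref{zhenzhenvvv}), the same recycling of the computation (\ref{moufan}) to control the $\Phi$-independent forcing $F$, and the same use of (\ref{ghtuss}) for the weighted time-derivative bounds. The only genuine divergence is the bookkeeping of the final Gronwall: the paper substitutes the elliptic bound into the transport inequality and runs a \emph{pointwise} Gronwall on $\Phi$, using that $1+|\nabla u|_\infty+F\in L^1([0,T])$ (which follows from (\ref{guanj}) and H\"older since $p_0\geq 1$), whereas you solve for $\Phi$ in terms of $N(t)=\int_0^t|u|^{p_0}_{D^{3,q}}\,\mathrm{d}s$ and close on the integrated quantity. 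Both closures are valid given (\ref{guanj}), so your proof is correct; but your stated motivation for the detour is misplaced. Once $\int_0^T F^{p_0}\,\mathrm{d}t\leq C$ with $p_0\geq 1$ is in hand, $\int_0^T F\,\mathrm{d}t\leq C$ follows trivially, so the pointwise Gronwall is not "forced through a borderline divergent integral." The genuine borderline issue at $q=6$ sits \emph{upstream}, inside Step 1 itself: there $p_0=\frac{4q}{5q-6}=1$ and the H\"older exponent $\frac{2p_0q}{4q-p_0(3q-6)}$ in (\ref{moufan}) equals $1$, so the weight $s^{-1}$ is not integrable and the bound $\int_0^T|\nabla u_t|_6^{p_0}\,\mathrm{d}t\leq C$ is not actually delivered by the available estimates $\int_0^T s|u_t|^2_{D^2}\,\mathrm{d}s\leq C$ and $\sup_t t^2|u_t|^2_{D^2}\leq C$. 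Your integrated Gronwall does nothing to repair this, since it still consumes (\ref{guanj}) as an input; the endpoint caveat is shared by your proof and the paper's alike, and is really a defect of the range "$p_0\leq\frac{4q}{5q-6}$" rather than of either closure argument.
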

 \begin{proof}
From  Lemmas \ref{ok} and  \ref{s2}-\ref{wlem:4-3}, we easily obtain
\begin{equation}\label{zhen12}\begin{split}
|u|_{D^{3,q}}
\leq& C\big(|\rho u_t+\rho u\cdot\nabla u|_{D^{1,q}}+|\text{rot}H\times H|_{D^{1,q}}+|P|_{D^{2,q}}\big)\\
\leq & C(|u_t|_\infty+|\nabla u_t|_q+|u|_{D^{2,q}}+|H|_{D^{2,q}}+|P|_{D^{2,q}}).
\end{split}
\end{equation}
Due to the Sobolev inequality, Poincare inequality and Young's inequality, we have
\begin{equation*}\begin{split}
|u_t|_\infty\leq& C|u_t|^{1-\frac{3}{q}}_{q}\| u_t\|^{\frac{3}{q}}_{W^{1,q}}\leq C|\nabla u_t|_2 +C|\nabla u_t|_q,
\end{split}
\end{equation*}
then we have
\begin{equation*}\begin{split}
|u(t)|_{D^{3,q}}
\leq & C(|\nabla u_t|_2+|\nabla u_t|_q+|u|_{D^{2,q}}+|H|_{D^{2,q}}+|P|_{D^{2,q}}).
\end{split}
\end{equation*}
According to Lemmas \ref{s4}-\ref{wlem:4-3}, via the completely same argument in (\ref{moufan}), we have
\begin{equation}\label{guanj}
\begin{split}
&\int_{0}^{t}C(|\nabla u_t|_2+|\nabla u_t|_q+|u|_{D^{2,q}})^{p_0}\text{d}s
\leq C.
\end{split}
\end{equation}

Then,
 applying $\nabla^2$  to  $(\ref{eq:1.2})_3$, and 
multiplying the result equation by $q\nabla^2 \rho|\nabla^2 \rho^{q-2}|$, integrating  over $\Omega$ we easily deduce that
\begin{equation}\label{zhenzhen1d1}\begin{split}
\frac{d}{dt}|\rho|^q_{D^{2,q}}\leq& C|\nabla u|_\infty|\rho|^q_{D^{2,q}}+C|\rho|_{\infty}|u|_{D^{3,q}}|\rho|^{q-1}_{D^{2,q}}+|\nabla \rho|_\infty |u|_{D^{2,q}} |\rho|^{q-1}_{D^{2,q}},
\end{split}
\end{equation}
which, together with (\ref{zhu150}),
\begin{equation}\label{zhenzhenzxc}\begin{split}
\frac{d}{dt}|\rho|_{D^{2,q}}\leq& C(|\nabla u|_\infty+1+F)(1+|\rho|_{D^{2,q}}+|P|_{D^{2,q}}+|H|_{D^{2,q}})+CF,
\end{split}
\end{equation}
where $F=|\nabla u_t|_2+|\nabla u_t|_q+|u|_{D^{2,q}}$. And similarly, we have
\begin{equation}\label{zhenzhenvvv}\begin{cases}
\displaystyle
\frac{d}{dt}|H|_{D^{2,q}}\leq C(|\nabla u|_\infty+F+1)(1+|\rho|_{D^{2,q}}+|P|_{D^{2,q}}+|H|_{D^{2,q}})+F,\\[8pt]
\displaystyle
\frac{d}{dt}|P|_{D^{2,q}}\leq C(|\nabla u|_\infty+F+1)(1+|\rho|_{D^{2,q}}+|P|_{D^{2,q}}+|H|_{D^{2,q}})+F.
\end{cases}
\end{equation}

So we combining (\ref{zhenzhenzxc})- (\ref{zhenzhenvvv}), we quickly have
\begin{equation}\label{wwzhmm12acc}
\begin{split}
&\frac{d}{dt}(|\rho|_{D^{2,q}}+|H|_{D^{2,q}}+|P|_{D^{2,q}})\\
\leq& C(1+|\nabla u|_\infty+F)(1+|\rho|_{D^{2,q}}+|P|_{D^{2,q}}+|H|_{D^{2,q}})+C(1+F).\end{split}
\end{equation}
Then via Gronwall's inequality,   (\ref{guanj}) and (\ref{wwzhmm12acc}),  we obtain
\begin{equation*}
\begin{split}
|\rho|_{D^{2,q}}+|H|_{D^{2,q}}+|P|_{D^{2,q}}+\int_{0}^{t}|u(s)|^{p_0}_{D^{3,q}}\text{d}t\leq C, \quad 0\leq t\leq T.
\end{split}
\end{equation*}

Finally, due to  relation (\ref{ghtuss}),
we immediately get the desired conclusions.

\end{proof}

Finally, we have
\begin{lemma}[\textbf{Higher order estimate of the velocity $u$}]\label{lem:4-4a}
\begin{equation*}
\begin{split}
t^2|u(t)|_{D^{3,q}}+t^2|u_t(t)|^2_{ D^2}+t^2|\sqrt{\rho}u_{tt}(t)|^2_2+\int_{0}^{T}s^2|u_{tt}(s)|^2_{D^1_0}\text{d}s\leq C
\end{split}
\end{equation*}
where $C$ only depends on $C_0$ and $T$ $(any \ T \in (0,\overline{T}])$.
 \end{lemma}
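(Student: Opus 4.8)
The plan is to mirror the proof of Lemma \ref{lem:4-4}, with the auxiliary field $v$ replaced throughout by the solution $u$ itself and the polynomial weights $c_i$ replaced by the single constant $C=C(C_0,T)$ furnished by Lemmas \ref{s2}--\ref{wlem:4-4}. First I would differentiate the momentum equation (\ref{wgh78}) once more in time, obtaining
\[
\rho u_{ttt}+Lu_{tt}=-\nabla P_{tt}-\rho(u\cdot\nabla u)_{tt}-2\rho_t(u\cdot\nabla u+u_t)_t-\rho_{tt}(u\cdot\nabla u+u_t)+(\text{rot}H\times H)_{tt},
\]
then multiply by $u_{tt}$ and integrate over $\Omega$. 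Rewriting $\int_\Omega\rho u_{ttt}\cdot u_{tt}\,\text{d}x$ via $\rho_t=-\text{div}(\rho u)$ and integrating the transport term $\int_\Omega\rho(u\cdot\nabla u_{tt})\cdot u_{tt}\,\text{d}x$ by parts into a multiple of $\int_\Omega\rho_t|u_{tt}|^2\,\text{d}x$, one reaches the energy identity
\[
\frac{1}{2}\frac{d}{dt}\int_\Omega\rho|u_{tt}|^2\,\text{d}x+\int_\Omega\big(\mu|\nabla u_{tt}|^2+(\lambda+\mu)(\text{div}u_{tt})^2\big)\,\text{d}x=\sum_i L_i,
\]
whose right-hand side collects exactly the terms appearing in (\ref{ebn}) with $v$ replaced by $u$; the magnetic contribution is again treated by writing $\text{rot}H\times H=\text{div}(H\otimes H-\frac{1}{2}|H|^2 I_3)$ (see (\ref{zhoumou})) and integrating by parts onto $\nabla u_{tt}$.

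Next I would estimate each $L_i$ by H\"older, Gagliardo--Nirenberg (Lemma \ref{gag}) and Young's inequalities, invoking Lemmas \ref{s2}--\ref{wlem:4-4} to bound $|\rho|_\infty$, $|H|_\infty$, $\|\nabla u\|_1$, $|\nabla u_t|_2$ and the second time derivatives $|(\rho_{tt},P_{tt},H_{tt})|_2$. Every term carrying $|\nabla u_{tt}|_2$ is absorbed into the dissipation on the left, every term carrying $|\sqrt{\rho}u_{tt}|_2$ is retained for the Gronwall step, and the remaining factors become integrable once the weight $s^2$ is inserted: for instance $\int_0^T s^2|u_t|^2_{D^2}\,\text{d}s\le T\int_0^T s|u_t|^2_{D^2}\,\text{d}s\le C$ by Lemma \ref{wlem:4-3} and $\int_0^T s^2|(\rho_{tt},P_{tt},H_{tt})|^2_2\,\text{d}s\le C$ by Lemma \ref{s8}. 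Multiplying the identity by $s^2$ and integrating over $(\tau,t)$ then yields
\[
t^2|\sqrt{\rho}u_{tt}(t)|^2_2+\int_\tau^t s^2|\nabla u_{tt}|^2_2\,\text{d}s\le \tau^2|\sqrt{\rho}u_{tt}(\tau)|^2_2+C\int_\tau^t\big(1+|\nabla u_t|^2_2\big)s^2|\sqrt{\rho}u_{tt}|^2_2\,\text{d}s+C.
\]

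To discard the initial contribution I would invoke Lemma \ref{bei}: since $t^{1/2}\sqrt{\rho}u_{tt}\in L^2([0,T];L^2)$ by Lemma \ref{wlem:4-3}, there is a sequence $s_k\to0$ with $s_k^2|\sqrt{\rho}u_{tt}(s_k)|^2_2\to0$, and letting $\tau=s_k\to0$ removes the boundary term. Because $1+|\nabla u_t|^2_2\in L^1([0,T])$ by Lemma \ref{wlem:4-1}, Gronwall's inequality then delivers the bounds on $t^2|\sqrt{\rho}u_{tt}(t)|^2_2$ and $\int_0^T s^2|u_{tt}|^2_{D^1_0}\,\text{d}s$. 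Finally the pointwise bounds $t^2|u_t(t)|^2_{D^2}$ and $t^2|u(t)|_{D^{3,q}}$ follow from the Lam\'e elliptic estimate (Lemma \ref{zhenok}): applied to (\ref{wgh78}) it controls $|u_t|_{D^2}$ by $|\rho u_{tt}|_2$ together with the lower-order data, while as in (\ref{zhen12}) it controls $|u|_{D^{3,q}}$ by the $D^{1,q}$-norm of the momentum right-hand side, both of which are now dominated by the quantities just estimated in combination with Lemmas \ref{s8} and \ref{wlem:4-4}.

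The main obstacle is the self-consistency of the top-order terms: since $v=u$ here, the term $-\rho(u\cdot\nabla u)_{tt}\cdot u_{tt}$ genuinely contains $u_{tt}$ and $\nabla u_{tt}$, so one must integrate its transport piece $\rho(u\cdot\nabla u_{tt})\cdot u_{tt}$ by parts to trade it for $\rho_t|u_{tt}|^2$, and split the remaining pieces $\rho(u_{tt}\cdot\nabla u)\cdot u_{tt}$ and $\rho(u_t\cdot\nabla u_t)\cdot u_{tt}$ so that each reduces to an absorbable $\epsilon|\nabla u_{tt}|^2_2$ plus a Gronwall factor $C(1+|\nabla u_t|^2_2)|\sqrt{\rho}u_{tt}|^2_2$. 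The whole scheme closes only because the weight $s^2$ renders the time-singular lower-order factors integrable and Lemma \ref{bei} legitimizes the limit $\tau\to0$.
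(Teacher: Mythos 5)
Your proposal is correct and follows exactly the route the paper intends: the paper omits this proof entirely, remarking only that it follows by the method of Lemma \ref{wlem:4-3} (i.e.\ the $t^2$-weighted analogue of Lemma \ref{lem:4-4} with $v$ replaced by $u$), which is precisely the scheme you carry out -- differentiate (\ref{wgh78}) in time, test with $u_{tt}$, handle the transport term $\rho(u\cdot\nabla u_{tt})\cdot u_{tt}$ by parts, weight by $s^2$, remove the boundary term via Lemma \ref{bei} applied to $t^{1/2}\sqrt{\rho}u_{tt}\in L^2$, close by Gronwall, and recover $t^2|u_t|^2_{D^2}$ and $t^2|u|_{D^{3,q}}$ from the Lam\'e elliptic estimates. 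Your explicit treatment of the self-consistent top-order terms (in particular $\rho(u_t\cdot\nabla u_t)\cdot u_{tt}$, rendered integrable by the $s^2$ weight together with $s|\nabla u_t|^2_2\le C$) supplies the detail the paper leaves implicit.
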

This lemma can be easily proved via the method used in Lemma  \ref{wlem:4-3}, here we omit it.
And this will be enough to extend the regular solutions of $(H,\rho,u,P)$ beyond $t\geq \overline{T}$.

In truth, in view of the estimates obtained in  Lemmas \ref{s2}-\ref{wlem:4-4}, we quickly know that the functions $(H,\rho,u,P)|_{t=\overline{T}} =\lim_{t\rightarrow \overline{T}}(H,\rho,u,P)$ satisfies the conditions imposed on the initial data $(\ref{th78})-(\ref{th79})$. Therefore, we can take $(H,\rho,u,P)|_{t=\overline{T}}$ as the initial data and apply the local existence Theorem \ref{th5} to extend our local classical solution beyond $t\geq \overline{T}$. This contradicts the assumption on $\overline{T}$.

\bigskip


\begin{thebibliography}{99}



\bibitem{bjr} J. L., Boldrini,  M. A., Rojas-Medar, E., Fern$\acute{a}$ndez-Cara, Semi-Galerkin approximation and strong solutions to the equations of the nonhomogeneous asymmetric fluids, \textit{J.Math.Pures.Appl.} \textbf{82} (2003) 1499-1525.\\[2pt]


\bibitem{CK3} Y. Cho, H. J. Choe, and H. Kim, Unique solvability of the initial boundary value problems for compressible viscous  fluids, \textit{J.Math.Anal.Appl.} \textbf{83} (2004) 243-275.\\[2pt]
\bibitem{CK} Y. Cho, H. Kim, Existence results for  viscous polytropic fluids with vacuum, \textit{J.Differ.Equations} \textbf{228} (2006) 377-411.\\[2pt]
\bibitem{guahu} Y. Cho, H. Kim, On classical solutions of the compressible Navier-Stokes equations with nonnegative initial densities, \textit{manu.math} \textbf{120} (2006) 91-129.\\[2pt]

\bibitem{jishan} J. Fan, W. Yu, Strong solutions to the magnetohydrodynamic equations with vacuum, \textit{Nonl. Anal.} \textbf{10} (2009) 392-409.\\[2pt]

\bibitem{gandi} G. P. Galdi, \textit{An introduction to the Mathematical Theory of the Navier-Stokes equations}, Springer, New York, 1994.\\[2pt]




%






\bibitem{HX1} X.D. Huang, J. Li and  Z.P. Xin, Global Well-posedness of classical solutions with large oscillations and vacuum, \textit{ Comm. Pure. Appl. Math} \textbf{65} (2012) 0549-0585.\\[2pt]

\bibitem{HX2} X.D. Huang, J. Li and  Z.P. Xin, Global Well-posedness of classical solutions with  vacuum on bounded domains, (2012) Preprint. \\[2pt]
\bibitem{hup} X.D. Huang, J. Li and  Z.P. Xin,Blow-up criterion for the  compressible flows with vacuum states, \textit{ Comm.  Math. Phys} \textbf{301} (2010) 23-35.\\[2pt]


\bibitem{kawa} S. Kawashima, M. Okada,  Smooth global solutions for the one-dimensional equations in  magnetohydrodynamics,  \textit{ Acad. Ser. A Math. Sci} \textbf{58} (1982) 384-387.\\[2pt]

  \bibitem{lipan} Y. Li, R. Pan and S. Zhu, On regular solutions of the 2-D compressible flow  with degenerate viscosities and vacuum, (2013) Preprint.  \\[2pt]

\bibitem{dehua} X. Li, N Su and  D. Wang,  Local strong solution to the compressible magnetohydrodynamics flow with large data,  \textit{J. Hyperbolic Differential Equations} \textbf{3} (2011) 415-436.\\[2pt]








\bibitem{amj} A. Majda, \textit{Compressible fluid flow and systems of conservation laws in several space variables}, Applied Mathematical Science \textbf{53}, Spinger-Verlag: New York, Berlin Heidelberg, 1986.\\[2pt]

\bibitem{tms1} T. Makino,  S. Ukai, S. Kawashima, Sur la solution $\grave{\text{a}}$ support compact de equations d'Euler compressible, \textit{Japan J Appl Math} \textbf{33} (1986) 249-257.\\[2pt]






















%















\bibitem{pc} Ponce,  G. Remarks on a paper:  Remarks on the breakdown of smooth solutions for the $3$-D Euler equations, \textit{Comm. Math. Phys} \textbf{98} (1985) 349-353.\\[2pt]



\bibitem{olga} O. Rozanova, Blow-up of smooth  solutions to  the barotropic compressible magnetohydrodynamic equations with finite mass and energy, Preprint.\\[2pt]
\bibitem{olg1} O. Rozanova, Blow-up of smooth highly decreasing at infinity solutions to  the compressible Navier-Stokes Equations , \textit{J. Differ.Euqations}\textbf{ 51} (1998) 0229-0240.\\[2pt]

\bibitem{jm} J.Simon, Compact sets in $L^P(0,T;B)$, \textit{Ann.Mat.Pura.Appl}\textbf{ 146} (19987) 65-96.\\[2pt]


 \bibitem{harmo} E. M. Stein, \textit{Singular integrals and Differentiability properties of Functions}, Princeton Univ. Press, Princeton NJ, 1970.\\[2pt]
 \bibitem{zif} Y. Sun, C. Wang and F. Zhang, A Beale-Kato-Majda blow-up criterion  to the compressible Navier-Stokes Equation, \textit{J. Math.Pure.Appl} \textbf{95} (2011) 36-47.\\[2pt]

\bibitem{zhang2d} Y. Sun, C. Wang and F. Zhang, A  blow-up criterion of strong solutions to the  2-D compressible Navier-Stokes Equation, \textit{Sci. Chi. Math} \textbf{54} (2011) 105-116.\\[2pt]




\bibitem{zx} Z. P. Xin, Blow-up of smooth solutions to the compressible Navier-Stokes Equation with Compact Density, \textit{Commun.Pure.App.Math}\textbf{ 51} (1998) 0229-0240.\\[2pt]

\bibitem{xy} Z. P. Xin and W. Yan, On blow-up of classical solutions to the compressible Navier-Stokes Equations, \textit{Commun. Math. Phys.}  \textbf{321} (2013) 529-541.\\[2pt]




\bibitem{gerui} X. Xu and J. Zhang, A blow-up   criterion for the $3$-D non-resistive  compressible magnetohydrodynamic  Equations with initial vacuum, \textit{Nonl. Anal.}  \textbf{12} (2011) 3442-3451.\\[2pt]










%













%









\end{thebibliography}
\end{document}